\newcommand{\I}{\mathcal{I}}
\newcommand{\JJ}{\mathcal{J}}
\newcommand{\R}{\mathbb{R}}
\newcommand{\N}{\mathbb{N}}
\newcommand{\PP}{\mathcal{P}}
\theoremstyle{plain}
\newtheorem{defi}{Definition}[section]
\newtheorem{prop}[defi]{Proposition}
\newtheorem{teo}[defi]{Theorem}
\newtheorem{cor}[defi]{Corollary}
\newtheorem{lema}[defi]{Lemma}
\newtheorem{remark}[defi]{Remark}
\theoremstyle{definition}
\theoremstyle{remark}
\numberwithin{equation}{section}
\begin{document}

\title[]{Fractional truncated Laplacians:\\ representation formula, fundamental solutions\\ and applications}

\author[]{Isabeau Birindelli}

\author[]{Giulio Galise}
\address{
Isabeau Birindelli and Giulio Galise:
Dipartimento de Matematica Guido Castelnuovo, Sapienza Università di Roma,
Piazzale Aldo Moro 5, Roma, ITALIA.
\newline \textit{Email address:} {\tt galise@mat.uniroma1.it, isabeau@mat.uniroma1.it}
}
\author[]{Erwin Topp}
\address{
Erwin Topp:
Departamento de Matem\'atica y C.C., Universidad de Santiago de Chile,
Casilla 307, Santiago, CHILE.
\newline \textit{Email address:} {\tt erwin.topp@usach.cl}
}


\date{\today}

\begin{abstract} 
We introduce some nonlinear extremal nonlocal operators that approximate the, so 
called, truncated Laplacians. For these operators we construct representation formulas that lead to the 
construction of what, with an abuse of notation, could be called \lq\lq fundamental solutions\rq\rq. This, in turn, leads to Liouville type results.\\
The interest is double: on one hand we wish to \lq\lq understand\rq\rq\ what is the right way to define the nonlocal version of the truncated Laplacians, on the other, we introduce nonlocal operators whose nonlocality is  on one dimensional lines, and this dramatically changes the prospective, as is quite clear from the results obtained that often differs significantly with the local case or with the case where the nonlocality is diffused. Surprisingly this is true also for operators that approximate the Laplacian.
\end{abstract}

\maketitle
\section{Introduction}

In the last decades there has been an increasing interest in the comprehension of second order  degenerate elliptic equations.  
The general idea being that new phenomena may occur when the uniform ellipticity condition is replaced by weaker form of ellipticity, while 
other fundamental properties like e.g. the comparison principle may still hold. It would be impossible and far too long to enumerate all the 
works and the \lq\lq kind\rq\rq\ of degeneracies that have been considered: degeneracy may depend of the point of application of the operator, on the 
value of the gradient of the solution, or it may be the case that the operator is simply \lq\lq monotone\rq\rq\,\ i.e. for any couple of symmetric matrices 
$X$ and $Y$
$$X\leq Y\Rightarrow F(X)\leq F(Y).$$
In the realm of nonlocal equations, these very degenerate operators have only just begun to be considered, but they seem to open very interesting and surprising results as will be evident later on, for example in the strong maximum principle of Proposition \ref{propSMP}  or the Liouville Theorem \ref{linearLiouville}.  In order to start a theory on nonlocal degenerate elliptic fully nonlinear operators, one needs to define general operators that are \lq\lq extremal\rq\rq\ among that class.  So that sub or supersolutions of these extremal operators are sub or supersolutions for any degenerate operator. 
We will now define the two  classes of nonlocal extremal operators we will consider in this paper.
In both cases, the fractional order of the operator is cast by $s \in (0,1)$.

We start with the first model, the description is somehow long for an introduction, so we ask for some patience from the reader: let $N \in \N$, $k \in \{ 1,2,...,N \}$, given $\xi \in \mathbb S^{N-1}$, $x \in \R^N$ and $u: \R^N \to \R$, we denote by
\begin{equation}\label{linear}
\begin{split}
\I_\xi u(x) = C_s \mathrm{P.V.} \int_{-\infty}^{+\infty} [u(x + \tau \xi) - u(x)] |\tau|^{-(1 + 2s)}d\tau,
\end{split}
\end{equation}
where $\mathrm{P.V.}$ stands for the Cauchy Principal Value and $C_s = C_{1,s} > 0$ is a normalizing constant related to the well-known fractional Laplacian $-(-\Delta)^s$, see~\eqref{frac} below. Roughly speaking, $\I_\xi$ acts as the one dimensional fractional $2s$-derivative in the direction of $\xi$. 
More precisely, for each $\xi \in \mathbb S^{N-1}$ and with an appropriate modification in the choice of the normalizing constant $C_{s}$ in the definition, $\I_{\xi}$ is identified with the pseudo-differential operator in the Schwartz space defined through the symbol
$$
v \mapsto -|\langle \xi, v \rangle|^{2s}.
$$

An important fact related to the choice of the normalizing constant $C_s$  and to the understanding of the definition of $\I_\xi u(x)$ is the asymptotic 
$$
\I_{\xi}u(x) \to \langle D^2u(x) \xi, \xi \rangle, \quad \mbox{as} \ s \to 1^-,
$$
under suitable regularity assumptions on $u$.
We can now define the extremal operators
\begin{equation}\label{operator}
\I_k^+ u(x) = \max \Big{\{} \sum_{i=1}^{k} \I_{\xi_i} u(x) \ : \ \{ \xi_i \}_{i=1}^{k} \in \mathcal V_k \Big{\}},
\end{equation}
and similarly for $\I_k^-$ taking instead the minimum, where $\mathcal V_k$ is the family of $k$-dimensional orthonormal sets in $\R^N$.
Let us emphasize that these operators are nonlocal, but the nonlocality is in some sense one dimensional. As far as the case $k=1$ is concerned, let us mention that $\I_1^-$ has been recently considered by  Del Pezzo-Quaas-Rossi \cite{DPQR} in order to introduce  the notion of fractional convexity.

\medskip
The second class of operators are instead $k$-dimensionally nonlocal. For $V \in \mathcal V_k$, we denote $ \langle V \rangle$ the $k$-dimensional subspace generated by $V$. Then, for $x \in \R^N$ and $u: \R^N \to \R$ and $V = \{ \xi_1, ..., \xi_k \} \in \mathcal V_k$ we denote
\begin{equation*}
\JJ_{V}u(x) = C_{k,s}\, \mathrm{P.V.} \int_{\R^k} [u(x + \sum_{i=1}^{k} \tau_i \xi_i) - u(x)] (\sum_{i=1}^{k} \tau_i^2)^{-\frac{k + 2s}{2}} d\tau_1 ... d\tau_k.
\end{equation*}
where $C_{k,s} > 0$ is the normalizing constant of the fractional Laplacian in the $k$-Euclidean space (c.f.~\eqref{normconst}).
Using the change of variables formula (see~\cite{EG}), we have the equivalent formulation
\begin{equation*}
\JJ_{V}u(x) = C_{k,s}\, \mathrm{P.V.} \int_{\langle V \rangle} [u(x + z) - u(x)] |z|^{-(k + 2s)}d\mathcal H^k(z),
\end{equation*}
where $\mathcal H^k$ is the $k$-dimensional Hausdorff measure in $\R^N$.

Then, the extremal operator we consider here is
\begin{align}\label{operador2}
\JJ_k^+u(x) = \max_{V \in \mathcal V_k} \JJ_{V}u(x),
\end{align}
and analogously for $\JJ_k^-$ replacing $\max$ by $\min$ in the above definition. Notice that $ \JJ_1^{\pm} = \I_1^{\pm}$ and $\JJ_N^{\pm} = -(-\Delta)^s$. For this reason, concerning $\JJ_k^{\pm}$,  we only concentrate on the cases $1 < k < N$.

\medskip
Clearly for both classes of operators,  in a suitable functional framework, say for bounded smooth functions $u$, $\I_k^\pm u(x) $ and $\JJ_k^\pm u(x)$ converge to  the so called truncated laplacian 
$\PP_k^\pm u(x)$ as $s \to 1$, where 
\begin{equation}\label{truncated1}
\PP_k^+ u(x):= \sum_{i = N - k + 1}^N \lambda_i(D^2u(x))= \max \Big{\{} \sum_{i=1}^{k} \langle D^2 u(x) \xi_i, \xi_i \rangle  \ : \ \{ \xi_i \}_{i=1}^{k} \in \mathcal V_k \Big{\}},
\end{equation}
$\lambda_i(D^2u)\leq\lambda_{i+1}(D^2u)$ being the eigenvalues of $D^2u$ arranged in nondecreasing order, and, mutatis mutandis, similarly for $\PP_k^- u(x)$ which is the sum of the smallest $k$-eigenvalues, we replace $\max$ by $\min$ in the above formula.
The truncated laplacians have received a certain interest, both in geometry and PDE. We wish to remember the works of: Harvey-Lawson \cite{HL1,HL2}, Caffarelli-Li-Nirenberg \cite{CLN}, Capuzzo Dolcetta-Leoni-Vitolo \cite{CDLV}, Blanc-Rossi \cite{BR}  and of two of the authors of this note with Ishii and Leoni \cite{BGI,BGI2,BGL}.
One of the scopes of this paper is to shed some light on different ways of defining generalizations of these extremal degenerate elliptic operators.

The above definitions seem to be natural extensions of the nonlinear second-order operator to the nonlocal setting, in view of the definition of the fractional Laplacian $(-\Delta)^s$. Evaluated on a measurable function $u$ satisfying regularity and growth condition at infinity, its precise definition reads as
	\begin{equation}\label{frac}
	(-\Delta)^s u(x) = - C_{N,s} \mathrm{P.V.} \int_{\R^N} [u(x+z) - u(x)] |z|^{-(N + 2s)}dz,
	\end{equation}
	where P.V. stands for the Cauchy principal value and $C_{N,s} > 0$ is a normalizing constant making $-(-\Delta)^s \to \Delta$ as $s \to 1^-$. See~\eqref{normconst} in the Appendix for details on this constant.

We use the integral formulation of the problem, and then we restrict the evaluation over a class of summable functions along lines with arbitrary direction. A sufficient condition for this is cast by the space
\begin{equation*}
L^1_{1, \sigma} = \Big{\{} u \in L^1_{loc}(\R^N) : \int_{V} \frac{|u(y)| d  \mathcal H^1(y)}{1 + |y|^{1 + \sigma}} <+ \infty \;\, \forall\, V \ \mbox{affine subspace of} \ \R^N,\,\dim(V)=1  \Big{\}}
\end{equation*}
where $\mathcal H^1$ stands for the one dimensional Hausdorff measure in $\R^N$.


Notice that a function $u \in L^1(\R^N)$ may not belong to $L^1_{1,\sigma}$.

\medskip
The first necessity has been to find representation formulas,  at least say for radial functions with completely monotonic profile, for example, for power type functions. 
If we focus on the evaluation of $\I_k^+$ at a function $u(x) = |x|^{-\gamma}$ for $\gamma > 0$, the heuristic makes it reasonable to think that the operator preferably picks a frame $\{ \xi_i \}_{i}$ which includes the direction $\hat x = x/|x|$ (we assume $x \neq 0$), since along this radial direction the one dimensional profile of $u$ shows a sharper convexity. Then, the integral associated to the component $\I_{\hat x} u(x)$ at~\eqref{operator} involves the singularity of $u$ at the origin, which immediately restricts the exponent $\gamma < 1$. 
The mentioned representation formulas is depicted in Corollary~\ref{propPk} below. Concerning the maximal operator $\I_k^+$, the idea discussed above about the preference of the radial direction is confirmed. 

Concerning $\I_k^-$, the representation formula shows that in the case $k < N$ the operator picks a frame which is orthogonal to $\hat x$. 
More intriguing is the case of $\I_N^-$. We start noticing that it does not matches $-(-\Delta)^s$, and in fact $\I_N^-\neq\I_N^+$, while the equality occurs in the limit $s\to1^-$ with the asymptotics $\I_N^\pm \to \Delta$ as $s \to 1^-$.
We prove that for radial functions $u$ with convex, decreasing one dimensional profile, the operator chooses a frame in which all its elements form the same angle with respect to $\hat x$, and therefore we have the beautiful geometric symmetry result
\begin{equation}\label{repI-N}
\I_N^- u(x) = N \I_{\xi^*} u(x) \quad \mbox{for} \ x \neq 0,
\end{equation}
where $\xi^*$ is a unit vector such that $\left\langle \xi^*, \hat x\right\rangle = 1/\sqrt{N}$. 

We would like to mention that our representation formulas are obtained under rather strong monotonicity assumptions on the one dimensional profile of $u$. Such conditions allow to provide a representation formula for every $x \neq 0$, and therefore we believe they can be relaxed if we look  for instance, for the evaluation on bounded domains.

The representation formulas will be used in order to prove Liouville type results, i.e. existence or nonexistence of entire solutions  (or supersolutions) bounded from below. 
First we will consider \lq\lq superharmonic\rq\rq\ functions i.e. 
supersolutions of
\begin{align*}
\I_k^+ u = 0 \quad \mbox{in} \ \R^N.
\end{align*}
When $k=1$ and $s\in[\frac12,1)$ there are no nonconstant supersolutions bounded from below while, in the other cases, such supersolutions do exist. 
Interestingly this result is in contrast to the local second order counter part, and  it is really due to the fractional nature of the operator. This is explained by the existence of a “fundamental solution” of logarithmic profile in the later case.
Roughly speaking, since $s < 1$, there is a “gap” between the order of the operator and the dimensionality, and this room allows us to construct power-type fundamental solutions. We refer to the Appendix for a discussion about the asymptotic behaviour of the exponent of this fundamental solution when we approach the local regime (that is, when $s\rightarrow 1^-$), see Lemma 6.2.

 We also consider semilinear Liouville theorems for the equation
\begin{align*}
\I_k^\pm u + u^p = 0 \quad \mbox{in} \ \R^N.
\end{align*}
These semilinear Liouville theorems usually determine a critical value of the exponent $p$ above which there 
exists supersolutions and below which such nontrivial supersolutions don't exist.

In the case $\I_k^+$,  as it can be seen in Theorem~\ref{teoLiouville+} , and in view of the above discussion, the critical exponent $p$ leading to existence/nonexistence of nontrivial supersolutions for this equation is determined by the exponent of the power-type fundamental solution, which, by the nonlocal nature of the problem, is restricted to be less than $1$. As a consequence, we see that the Liouville result does not meet its local counterpart~\eqref{eqlocal} as $s \to 1^-$, in the sense that the critical exponent of the nonlocal equation diverges to infinity (equivalently the exponent of the fundamental solution vanishes, see the Appendix). %
%
 This is a remarkably nonlocal phenomena that is influenced by the tails of the kernel of the operator more than by its singularity.

\medskip

Concerning $\I_k^-$, the representation formula shows that in the case $k < N$ the operator picks a frame which is orthogonal to $\hat x$. This allows us to conclude the existence of nontrivial supersolutions to
\begin{align}\label{eqnolocal}
\I_k^- u + u^p = 0 \quad \mbox{in} \ \R^N,
\end{align}
for every $p>0$. This phenomena is closely related with its local counterpart presented in~\cite{BGL}.

\medskip

In the case of the equation
\begin{align}\label{eqnolocal2}
\I_N^- u + u^p = 0 \quad \mbox{in} \ \R^N,
\end{align}
let us emphasize that the representation formula~\eqref{repI-N} shows that for $x \neq 0$ the evaluation of the integral operator $\I_N^-$ does not observe possible singularities of $u$ at the origin. Thus, we are able to construct adequate fundamental solutions for $\I_N^-$ (at the expense of a technical redefinition of a power-type function) leading to a Liouville result for equation~\eqref{eqnolocal2} which is more in the direction of classical results, and more interesting, with a critical exponent that passes to the limit as $s \to 1^-$. 
%
%

The local counterpart of these Liouville theorems concerns the  equations
\begin{equation}\label{eqlocal}
\PP_k^{\pm} u + u^p = 0 \quad \mbox{in} \ \R^N.
\end{equation}

This problem was studied by two of the authors and F. Leoni in~\cite{BGL}. The construction of fundamental solutions for $\PP_k^{+}$ follows a careful analysis of the eigenvalues of the Hessian of radial functions and the use of the formula~\eqref{truncated1}. Once fundamental solutions are at disposal, Liouville-type results associated to the so-called Serrin exponent in space dimension $k$, i.e. $\frac{k}{k-2}$, follow the directions of~\cite{CL}. Results concerning $\PP_k^-$ are also provided there.

Concerning the other possible extremal operator $\JJ_k^{\pm}$ ,
we also obtain representation formulas for its evaluation on radial, monotone functions, leading to power-type fundamental solutions for these operators. Here we would like to mention that $\JJ_1^{\pm} = \I_1^{\pm}$, meanwhile $\JJ_N^{\pm} = -(-\Delta)^s$, from which we restrict ourselves to the case in which $k$ is neither $1$ nor $N$.

In view of the definition~\eqref{operador2}, the higher dimensionality of the integrand allows to prove, in the case of $\JJ_k^+$, that the fundamental solutions meet the ones of the $k$-th dimensional fractional Laplacian $-(-\Delta_{\R^k})^s$. This makes the analysis simpler and closer to the local context in the sense that the critical exponent associated to the problem
\begin{equation}
\JJ_k^+ u + u^p = 0 \quad \mbox{in} \ \R^N,
\end{equation} 
meets the critical exponent of~\eqref{eqlocal} as $s \to 1$. 
In particular, this shows that operators $\JJ_k^+$ and $\I_k^+$ are not equivalent, raising an interesting question related to which of them is 
more adequate for applications.

\medskip

The paper is organized as follows: in Section~\ref{sec1} we introduce the notion of viscosity solution and discuss comparison/maximum principles. In Sections~\ref{secrep} and~\ref{secLiouville} we concentrate on $\I_k^{\pm}$: in Section~\ref{secrep} we provide the representation formulas for radial, monotone functions, and in Section~\ref{secLiouville} we present the Liouville-type results for semilinear problems. In Section~\ref{secJ} we discuss the results for $\JJ_k^{\pm}$. Finally, in the Appendix we discuss the asymptotics as $s\to1^-$.

\section{Preliminaries and maximum principles}
\label{sec1}

We start with the notion of viscosity solution for our problem. Here $\I$ may denote $\I = \I_k^{\pm}, \JJ_k^{\pm}$, and $H \in C(\R^N \times \R \times \R^N)$.

We require certain structural assumptions. We will say that a function $u$ is admissible for $\I$ if $u \in L^1_{1, 2s}$ if $\I = \I_{k}^{\pm}$, and $u \in L_{k, 2s}^1$ when $\I = \JJ_{k}^{\pm}$, where for $k > 1$ and $\sigma \in (0,2)$ we denote the set
\begin{equation*}
L^1_{k, \sigma} = \Big{\{} u \in L^1_{loc}(\R^N) : \int_{V} \frac{|u(y)| d\mathcal H^k(y)}{1 + |y|^{k + \sigma}} <+\infty \;\, \forall\, V \ \mbox{affine subspace of} \ \R^N,\,\dim(V)=k  \Big{\}}
\end{equation*}
where $\mathcal H^k$ is the $k$-dimensional Hausdorff measure in $\R^N$.

For viscosity evaluation, we make precise some notation. Given $\xi \in \mathbb S^{N-1}$ we denote
\begin{align*}
& \I_{\xi, \delta} \phi(x) =  C_{1, s}\, \mathrm{P.V.} \int_{-\delta}^\delta [\phi(x + \tau \xi) - \phi(x)] |\tau|^{-(1 + 2s)} d\tau \\
& \I_{\xi}^\delta \phi(x) = C_{1,s} \int_{(-\delta, \delta)^c} [\phi(x + \tau \xi) - \phi(x)] |\tau|^{-(1 + 2s)} d\tau
\end{align*}
and, for each $k = 1,\ldots,N$, $\xi = \{ \xi_i \}_{i=1}^{k} \in \mathcal V_k$, we denote
\begin{align*}
& \JJ_{\xi, \delta} \phi(x) =  C_{k,s}\, \mathrm{P.V.} \int_{B_\delta} [\phi(x + \sum_{i=1}^{k} z_i \xi_i) - \phi(x)] |z|^{-(k + 2s)} dz \\
& \JJ_{\xi}^\delta \phi(x) = C_{k, s} \int_{B_\delta^c} [\phi(x + \sum_{i=1}^{k} z_i \xi_i) - \phi(x)] |z|^{-(k + 2s)} dz,
\end{align*}
where $z=(z_1,\ldots,z_k)\in\mathbb R^k$.

Notice that in the case $k=1$ then $\I_{\xi, \delta} = \JJ_{\xi, \delta}$ and $\I_{\xi}^\delta = \JJ_{\xi}^\delta$ for each $\xi \in \mathbb S^{N-1}$.

The following definition is provided in~\cite{BI}. 
\begin{defi}
	An upper semicontinuous function $u: \R^N \to \R$, admissible with respect to $\I$, is a viscosity subsolution to
	\begin{equation}\label{eqgral}
	\I u + H(x,u,Du) = 0
	\end{equation}	
	at a point $x_0 \in \R^N$ if for every function $\varphi \in C^2(B_\delta(x_0))$, $\delta > 0$,  such that $x_0$ is a local maximum point to $u - \varphi$, then 	\begin{equation*}
	\I(u, \varphi, x_0, \delta) + H(x_0, u(x_0), D\varphi(x_0)) \geq 0,
	\end{equation*}
	where 
	\begin{align*}
	\I_k^{+}(u, \varphi, x_0, \delta)
	& = \max_{ \xi \in \mathcal V_k} \{ \sum_{i=1}^{k} \I_{\xi_i, \delta} \varphi(x_0) + \I_{\xi_i}^\delta u(x_0)\},\\
\JJ_k^{+}(u, \varphi, x_0, \delta)
& = \max_{ \xi \in \mathcal V_k} \{ \JJ_{\xi, \delta} \varphi(x_0) + \JJ_{\xi}^\delta u(x_0)\},
\end{align*} 	
and where the evaluation for $\I = \I_k^-, \JJ_k^-$ is defined replacing the ``max" by ``min" in the respective definitions. 

We define viscosity supersolution and solution in the usual fashion.	
\end{defi}

Notice that $\I_k^- u = -\I_k^+ (-u)$ and analogously for $\JJ_k^{\pm}$. This property is compatible with the viscosity formulation. In fact, classical solutions are viscosity solutions for this definition. As usual, the above notion of solution can be equivalently defined if we ask that $x_0$ is a strict global maximum and/or minimum point, and the well-known stability properties of the viscosity solutions are at disposal here.

As a consequence, comparison principle holds among semicontinuous viscosity sub and supersolutions when the Hamiltonian $H$ satisfies the standard properness/continuity assumptions: if $u, v$ are respectively viscosity subsolution and supersolution to~\eqref{eqgral} in an open set $\Omega \subset \R^N$ (not necessarily bounded or smooth) such that $u \leq v$ in $\Omega^c$, then $u \leq v$ in $\Omega$, see~\cite{BI, BChI}.

%
%
%

The definition above admits unbounded or singular sub and/or supersolutions as soon the nonlocal operator is well-defined (say, belongs to an appropriate class $L^1_\sigma$). . 

\medskip

We now state the basic statement regarding the failure and the validity of the strong maximum/minimum principles for the operators $\I^\pm_k$.

\begin{prop}\label{propSMP} For any $1\leq k<N$ there exist nonconstant smooth solutions of 
\begin{equation}\label{1409eq1}
\I_k^- u \leq 0 \quad \mbox{in} \ \R^N
\end{equation}
which attain their minimum at some point in $\R^N$.  

If $u$ satisfies
		\begin{equation}\label{1409eq2}
		\I_N^- u \leq 0 \quad \mbox{in} \ \R^N, 
		\end{equation}
		in the viscosity sense, and it attains its minimum at some $x_0 \in \R^N$, then $u$ is constant.
\end{prop}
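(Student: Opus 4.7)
\medskip
\textbf{Plan.} For the first claim, we exhibit an explicit counterexample. Set $u(x)=\phi(x_N)$, where $\phi:\R\to\R$ is smooth, bounded, nonconstant, and attains its minimum (e.g.\ $\phi(t)=1-e^{-t^2}$). Because $k<N$, the hyperplane $e_N^\perp$ has dimension $N-1\ge k$, so it contains an orthonormal $k$-frame $\{\xi_1,\dots,\xi_k\}$. For this frame and any $x\in\R^N$, each slice $\tau\mapsto u(x+\tau\xi_i)=\phi(x_N)$ is constant, hence $\I_{\xi_i}u(x)=0$, and therefore $\I_k^- u(x)\le 0$ everywhere. The function $u$ is nonconstant, smooth, and attains its minimum on the hyperplane $\{x_N=0\}$.

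For the strong minimum principle for $\I_N^-$, after translation we assume $x_0=0$ and $u(0)=0$, so $u\ge 0$. Testing the viscosity supersolution condition at the origin with the constant function $\varphi\equiv 0$ (if strictness is needed, first with $\varphi_\varepsilon(x)=-\varepsilon|x|^2$ and then $\varepsilon\to 0^+$) yields, for every $\delta>0$,
$$\min_{\{\xi_i\}\in\mathcal V_N}\sum_{i=1}^N \I_{\xi_i}^{\delta}u(0)\le 0.$$
Each summand $\I_{\xi_i}^{\delta}u(0)=C_s\int_{|\tau|>\delta}u(\tau\xi_i)|\tau|^{-(1+2s)}\,d\tau$ is nonnegative because $u\ge 0$ and $u(0)=0$; hence the minimum equals $0$ and is realized by some orthonormal frame $\{\xi_i^{*,\delta}\}_{i=1}^N$ for which every integrand, being nonnegative with zero integral, must vanish, so $u\equiv 0$ on $\{\tau\xi_i^{*,\delta}:|\tau|\ge\delta\}$. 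Passing to a subsequence $\delta_n\to 0$ with $\xi_i^{*,\delta_n}\to\xi_i^*$ (by compactness of $\mathcal V_N$), continuity of $u$ transfers this vanishing to $N$ full orthonormal lines $\R\xi_i^*$ through the origin.

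The remaining step is to propagate this vanishing from the $N$ lines to all of $\R^N$, and this is the main obstacle. Set $S=u^{-1}(0)$; it is closed, and the preceding argument applied at every $y\in S$ (a global minimum point of $u$) produces, through $y$, an orthonormal frame whose $N$ lines lie in $S$. The plan is to iterate: starting from the $N$ lines at $0$, at each chosen point along these lines we get $N$ fresh orthonormal lines in $S$, and so on. One then argues by a reachability/density argument combined with the closedness of $S$ that the reachable subset is all of $\R^N$, forcing $u\equiv 0$. The subtlety is that the minimizing frames are not prescribed and can vary drastically from point to point, so $N$ orthonormal lines through a single point do not by themselves fill a neighborhood; the argument must exploit the abundance of intermediate base points in $S$, each one contributing $N$ further directions, to sweep out the whole space.
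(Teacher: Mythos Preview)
Your first part is correct and essentially identical to the paper's argument.

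For the second part, you correctly reach the point where, at any global minimum $y\in S:=u^{-1}(m)$, there is an orthonormal basis $\{\xi_i^*\}$ of $\R^N$ with the $N$ lines $y+\R\xi_i^*$ contained in $S$. But you then explicitly label the propagation step as ``the main obstacle'' and leave it unresolved beyond a vague appeal to ``reachability/density''. That is a genuine gap: without a quantitative mechanism, the iteration could in principle produce a closed, totally disconnected or lower-dimensional set $S$ that nevertheless contains, through each of its points, $N$ orthonormal lines---you have not ruled this out.

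The paper supplies exactly the missing quantitative idea. Fix any target $y$ and any $x_0\in S$, set $d_0=|y-x_0|$. Since $\{\xi_i^*\}$ is an orthonormal basis, at least one $\xi_i^*$ satisfies $|\langle \widehat{y-x_0},\xi_i^*\rangle|\ge 1/\sqrt{N}$. The orthogonal projection of $y$ onto the line $x_0+\R\xi_i^*$ is then a point $x_1\in S$ with
\[
|x_1-y|\le d_0\sqrt{1-\tfrac{1}{N}}.
\]
Iterating from $x_1$ (which is again a global minimum) gives $d_k\le d_0(1-1/N)^{k/2}\to 0$, so $x_k\to y$ and $u(y)=m$ by continuity. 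This contraction estimate is what turns the ``$N$ lines through each point'' structure into a proof that $S=\R^N$; without it the argument does not close.

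A minor remark: your passage to a subsequential limit of frames as $\delta\to 0$ is unnecessary. For a single fixed $\delta$ (chosen, say, smaller than $d_0/\sqrt{N}$) the minimizing frame already gives $u(x_0+\tau\xi_i)=m$ for all $|\tau|\ge\delta$, and the projection parameter $\tau$ in the contraction step automatically satisfies $|\tau|\ge d_0/\sqrt{N}>\delta$.
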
		

		\begin{remark}\label{dual}		In a dual fashion, for any $1\leq k<N$ there exist nonconstant smooth solutions of 
\begin{equation}\label{1409eq3}
\I_k^+ u \geq 0 \quad \mbox{in} \ \R^N
\end{equation}
which attain their maximum at some point in $\R^N$.  

If $u$ satisfies
		\begin{equation}\label{1409eq4}
		 \I_N^+ u \geq 0 \quad \mbox{in} \ \R^N, 
		\end{equation}
		in the viscosity sense, and it attains its maximum at some $x_0 \in \R^N$, then $u$ is constant.
%
\end{remark}

\begin{remark}\label{SMP} By the general fact
\begin{equation}\label{1509eq1}
\I^+_ku\leq0\;\;\Rightarrow\;\;\I^-_Nu\leq0\,,
\end{equation} 
we immediately obtain, via Proposition \ref{propSMP}, the validity of the strong minimum  principle  for supersolutions of $\I_k^+u=0$.\\
To see \eqref{1509eq1}, let $\left\{\xi_{N-k+1},\ldots,\xi_N\right\}$ be a $k$-dimensional orthonormal set in $\R^N$ such that
\begin{equation}\label{1509eq2}
\I^+_ku=\sum_{i=1}^k\I_{\xi_{N-k+i}}u\,.
\end{equation}
Without loss of generality we may further assume that $\I_{\xi_{N-k+1}}u\leq \I_{\xi_{N-k+2}}u\leq\ldots\leq \I_{\xi_N}u$. Hence from the inequality $\I^+_ku\leq0$, we infer that $\I_{\xi_{N-k+1}}u\leq0$. Let $\xi_1,\ldots,\xi_{N-k}$ such that $\left\{\xi_1,\ldots,\xi_N\right\}$ is an orthonormal basis of $\R^N$ and such that $\I_{\xi_1}u\leq \ldots\leq \I_{\xi_{N-k}}u$.  To obtain \eqref{1509eq1} it is then sufficient to prove that  $\I_{\xi_{N-k}}u\leq0$. If not, then $\I_{\xi_{N-k}}u>\I_{\xi_{N-k+1}}u$ and
$$
\sum_{i=1}^k\I_{\xi_{N-k+i}}u<\I_{\xi_{N-k}}u+\sum_{i=2}^k\I_{\xi_{N-k+i}}u,
$$  
but this contradicts the maximality of $\left\{\xi_{N-k+1},\ldots,\xi_N\right\}$ in \eqref{1509eq2}.

\medskip

In a similar way and using Remark \ref{dual}, we infer that the strong maximum principle for subsolution of $\I^-_ku=0$ holds.
\end{remark}
\begin{proof}[Proof of Proposition \ref{propSMP}]
Let $\varphi$ be a nonconstant smooth and bounded function of one variable which attains the minimum at some point in $\R$. Consider $\varphi$ as a function of $N$ variables just by setting $u(x):=\varphi(x_N)$. It is clear that $u$ is a nontrivial function attaining its minimum at some point in $\R^N$. If $\left\{e_i\right\}_{i=1}^N$ denote the canonical basis in $\R^N$, then
for any $x\in\R^N$ and any $\tau\in\R$ we have
$$
u(x+\tau e_i)=u(x)\quad\text{for $i=1,\ldots,N-1$}.
$$ 
Hence $\I_{e_i}u(x)=0$ for any $i=1,\ldots,N-1$ and 
\begin{equation*}
\I_k^-u(x)\leq\sum_{i=1}^k\I_{e_i}u(x)=0\quad \mbox{in} \ \R^N.
\end{equation*}
This concludes the first part of the proof.

For the second part, we use the argument of propagation of maxima through the support of the kernel of the nonlocal operator, see~\cite{C}. 

	Let $y \in \R^N$ and denote $d_0 = |y - x_0|$. Since $x_0$ is a minimum point for $u$ we can use the constant function equal to $u(x_0)$ as test function. For $\delta < \frac{d_0}{\sqrt{N}}$ and let $V = \{ \xi_i \}_{i=1}^N \in \mathcal V_N$ attaining the minimum at $\I_N^- (u, u(x_0), x_0, \delta)$, from which we can write
	\begin{equation*}
	\sum_{i=1}^N \I_{\xi_i}^\delta u(x_0) \leq 0.
	\end{equation*}
	
	In particular, since $x_0$ is a global minimum for $u$, for each $i$ we have
	\begin{equation*}
	\I_{\xi_i}^\delta u(x_0) = 0.
	\end{equation*}
	
	Thus, by the semicontinuity of $u$ we have $u(x_0 + \tau \xi_i) = u(x_0)$ for all $|\tau| \geq \delta$. Since $\delta$ is arbitrarily  small, then the previous equality holds for any $\tau\in\R$.
	
	Now, since $V$ is a basis of $\R^N$, there exists at least one $\xi_i \in V$ such that 
	$$
	\left|\langle \widehat{y - x_0}, \xi_i \rangle\right| \geq 1/\sqrt{N},
	$$
	where $\widehat{y - x_0}=\frac{y - x_0}{|y - x_0|}$. From this there exists $\tau$ such that $x_1 := x_0 + \tau \xi_i$ simultaneously satisfies $u(x_1) = u(x_0)$ and $d_1 := |x_1 - y| \leq d_0 \sqrt{1 - N^{-1}}$.
	
	Using the same argument above but with $x_1$ and $d_1$ replacing $x_0$ and $d_0$, it is possible to find $x_2 \in \R^N$ and $d_2 > 0$ such that $u(x_2) = u(x_0)$ and $|x_2 - y| = d_2 \leq d_1 \sqrt{1 - N^{-1}}$. Then, repeating this argument, we find a sequence $(x_k)$ such that $u(x_k) = u(x_0)$ and $x_k \to y$. By lower semicontinuity, we conclude that $u(y)\leq u(x_0)$ and then $u(y)=u(x_0)$, $x_0$ being the global minimum point of $u$. Since $y$ is arbitrary we get the result. 
\end{proof}


\section{Representation formula for { $\I_k^{\pm}$}.}\label{secrep}

We have already defined the linear operators $\I_\xi$ in~\eqref{linear}. At this point 
it is important to mention that~\eqref{linear} can be equivalently written as
\begin{equation}\label{linear2}
\begin{split}
\I_{\xi}u(x) = & \frac{C_s}{2} \int_{-\infty}^{+\infty} [u(x + \tau \xi) + u(x - \tau \xi) - 2u(x)] |\tau|^{-(1 + 2s)}d\tau \\
= & C_s \int_{0}^{+\infty} [u(x + \tau \xi) + u(x - \tau \xi) - 2u(x)] \tau^{-(1 + 2s)}d\tau.
\end{split}
\end{equation}

 Here and in what follows we use the following notation: for $x \neq 0$, denote $\hat x = x/|x|$ and denote $V_x = \left\langle  \{ \hat x \}\right\rangle^{\perp}$ the orthogonal subspace to $\hat x$. Given a subspace $V$, we denote $\pi_V$ the projection onto $V$. Then
\begin{lema}\label{lema0}
	Let $\xi \in \mathbb S^{N - 1}$, $x \in \R^N$ and $u \in C^2(\mathbb R^N) \cap L_{1,2s}^1$. Then
	\begin{itemize}
		\item[(a)] $\I_{\xi}u(x) = \I_{-\xi}u(x)$.
		
		\medskip
		
		\item[(b)] If $R$ is any rotation matrix in $\R^N$ and if we denote $\tilde u(x) = u(Rx)$, then
		\begin{equation*}
		\I_{\xi}\tilde u(x) = \I_{R\xi} u(Rx).
		\end{equation*}
		
		\medskip
		
		\item[(c)] If $u$ is radial, that is $u(x) = g(|x|)$ for some real valued function $g$, then
		$$
		\I_{\xi}u(x) = \I_{\xi_x}u(x),
		$$
		where $\xi_x = \pi_{V_x}(\xi) - \langle \xi, \hat x \rangle \hat x$ is the unit vector, symmetric to $\xi$ with respect to the hyperplane $V_x$. 
		
		\medskip
		
		\item[(d)] If $u$ is radial and $R: \R^N \to \R^N$ is a rotation matrix leaving invariant $V_x$, then
		\begin{equation*}
		\I_{\xi}u(x) = \I_{R \xi}u(x).
		\end{equation*}
	\end{itemize}
	\end{lema}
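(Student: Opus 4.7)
All four statements are symmetries of the integral defining $\I_\xi u(x)$, so the plan is to pick the right formulation and the right change of variables in each case. For (a), I would work with the symmetric rewriting~\eqref{linear2}, where the integrand $u(x+\tau\xi) + u(x-\tau\xi) - 2u(x)$ is manifestly invariant under $\xi \mapsto -\xi$. For (b), since $\tilde u(x + \tau \xi) = u(Rx + \tau R\xi)$ and $\tilde u(x) = u(Rx)$, substituting into the definition of $\I_\xi \tilde u(x)$ produces exactly the integral that defines $\I_{R\xi}u(Rx)$; no further work is needed beyond renaming.

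For (c), the key is that $\xi_x$ is the orthogonal reflection of $\xi$ through the hyperplane $V_x$. Writing $\xi = \pi_{V_x}(\xi) + \langle\xi,\hat x\rangle \hat x$ one sees that $|\xi_x|=1$ and $\langle\hat x,\xi_x\rangle = -\langle\hat x,\xi\rangle$. Expanding then gives
$$|x + \tau\xi_x|^2 = |x|^2 + 2\tau|x|\langle\hat x, \xi_x\rangle + \tau^2 = |x|^2 - 2\tau|x|\langle\hat x,\xi\rangle + \tau^2 = |x - \tau\xi|^2,$$
and analogously $|x - \tau \xi_x| = |x + \tau\xi|$. Radiality of $u$ then converts the interchange $\xi \leftrightarrow \xi_x$ into a swap of the first two $u$-values in the symmetric integrand of~\eqref{linear2}, leaving the integral unchanged.

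For (d) I would combine (b) with the radial invariance of $u$: since $\tilde u(y) = u(Ry) = u(y)$ for every $y\in\R^N$, part (b) specialises to $\I_\xi u(x) = \I_{R\xi} u(Rx)$. The hypothesis that $R$ preserves $V_x$ forces $R(V_x^\perp) = V_x^\perp$ by orthogonality of $R$, hence $Rx \in \{x,-x\}$. If $Rx = x$ we are done immediately. If $Rx = -x$, radiality turns $u(-x + \tau R\xi)$ into $u(x - \tau R\xi)$ and $u(-x)$ into $u(x)$, so $\I_{R\xi}u(-x) = \I_{-R\xi}u(x)$, which equals $\I_{R\xi}u(x)$ by part (a).

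I do not anticipate a serious obstacle: the whole lemma is book-keeping with the integral~\eqref{linear2}, and the principal-value prescription plays no role once the symmetric formulation is used. The only point requiring a little care is part (d), where the two possible signs of $Rx$ must be handled separately and absorbed via part (a).
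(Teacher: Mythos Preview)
Your proof is correct and parts (a)--(c) follow essentially the same approach as the paper. For (d), the paper instead computes $|x + \tau R\xi|^2$ directly by decomposing $R\xi$ into its $\hat x$- and $V_x$-components and using that $R$ is an isometry on $V_x$, whereas you route through part (b); both arguments are equally short, and your explicit treatment of the case $Rx = -x$ is a small point the paper leaves implicit.
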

	
	\begin{proof}
		The proof of $(a)$ and $(b)$ are immediate, and do not require $u$ to be radial. 
		For $(c)$, we see that
		\begin{align*}
		|x + \tau \xi_x|^2 & = ||x| - \tau \langle \xi, \hat x \rangle|^2 + |\tau \pi_{V_x}(\xi)|^2,
		\end{align*}
		and using the symmetry of the kernel, we make the change of variables $\tau = -\tau$, and noticing that $\xi = \pi_{V_x}(\xi) + \langle \xi, \hat x \rangle\hat x$ we conclude the result.
		
		For $(d)$, we notice that
		$$
		|x + \tau R \xi|^2 = |x + \tau \langle \xi, \hat x \rangle \hat x|^2 + |R \pi_{V_x}(\xi) \tau|^2, 
		$$
		and using that a rotation matrix is an isometry, we conclude the result.
	\end{proof}

\begin{remark}\label{rmklema0}
	By the previous lemma, for every radial function $u$ and every orthonormal frame $\{ \xi_i \}_{i=1}^{k}$ the definition of the operator $\I_k^{\pm}u(x)$ can be taken in such a way that the angle between $x$ and each $\xi_i$ is in $[0, \pi/2]$.
\end{remark}

Now we present the main technical result of this section.

\begin{lema}\label{lemaI}
	Assume $u(x) = \tilde g(|x|^2)$ is such that $u \in L^1_{1,2s}$. For $x \neq 0$ and $\theta \in [0,1]$ we denote
	$$
	I(|x|, \theta) := C_s |x|^{-2s} \int_{0}^{+\infty} \frac{\tilde g(|x|^2(1 + \tau^2 + 2 \tau \theta)) + \tilde g(|x|^2(1 + \tau^2 - 2 \tau \theta)) - 2 \tilde g(|x|^2)}{\tau^{1 + 2s}}d\tau.
	$$

	\begin{itemize}
		\item If $\tilde g$ is convex and $k = 1,..., N-1$, then
		\begin{align}\label{mainineqk-}
		\I_k^- u(x) = k I(|x|, 0).
		\end{align}
		
		\item If $\tilde g''$ is convex, then
		\begin{align}\label{mainineqN-}
		\I_N^- u(x) = N I(|x|, \frac{1}{\sqrt{N}}).
		\end{align}
		
		\item If $\tilde g, \tilde g''$ are convex, for all $k = 1,..., N$ we have
		\begin{align}\label{mainineq}
		\I_k^+ u(x) = I(|x|, 1) + (k - 1) I(|x|, 0).
		\end{align}
	\end{itemize}
\end{lema}

\begin{proof}
	For $a \geq b \geq 0$, let $h:[-1,1] \to \R$ be the function 
	$$
	h(t) = \tilde g(a + bt) + \tilde g(a - bt),
	$$
	and $p: [0,1] \to \R$ defined as $p(t) = h(\sqrt{t})$.
	
	Note that $h, h''$ are even. If $\tilde g$ is convex, so is $h$. From this, $0$ is a minimum point for $h$ and $h$ is nondecreasing in $[0,1]$. The same analysis in the case $\tilde g''$ is convex implies $h''$ is convex and nondecreasing in $[0,1]$.
	
		 	\medskip

	We start with~\eqref{mainineqk-}. Using the monotonicity of $h$, in particular $h(0)\leq h(1)$, for each $a \geq b \geq 0$ we get
	\begin{align}\label{hola}
	2 \tilde g(a) \leq \tilde g(a + b) + \tilde g(a - b).
	\end{align}
	
	Take an orthonormal set $\{ \xi_i \}_{i=1}^k$ and $\tau > 0$. Using the last inequality with $a = |x|^2 + \tau^2$ and $b = 2 \tau |\langle x, \xi_i \rangle|$ we have
	$$
	2 \tilde g(|x|^2 + \tau^2) \leq \tilde g(|x|^2 + \tau^2 + 2 \tau |\langle x, \xi_i \rangle|) + \tilde g(|x|^2 + \tau^2 - 2 \tau |\langle x, \xi_i \rangle|).
	$$
	
	Substracting $2\tilde g(|x|^2)$ in both sides, multiplying by the factor $\tau^{-(1 + 2s)}$, integrating from $0$ to $+\infty$, and summing-up in $i=1,\ldots,k$, we conclude that
	$$
	kI(|x|, 0) \leq \sum_{i=1}^{k} \I_{\xi_i} u(x).
	$$
	
	Since $k < N$, we can select an orthonormal set such that $\langle x, \xi_i \rangle = 0$ for all $i$ and the lower bound is attained, from which we arrive at~\eqref{mainineqk-}.		
	
	\medskip

	Now we continue with~\eqref{mainineqN-}. For this, we use that $\tilde g''$ is convex. Observe that $h'(0) = 0$. Now, $p'(t) = \frac{1}{2\sqrt{t}}h'(\sqrt{t})$ and then
	\begin{align*}
	p''(t) & = \frac{1}{4t} h''(\sqrt{t}) - \frac{1}{4t^{3/2}} (h'(\sqrt{t}) - h'(0)) \\
	& = \frac{1}{4t} h''(\sqrt{t}) - \frac{1}{4t^{3/2}} \int_{0}^{\sqrt{t}}h''(\theta) d\theta  \\
	& = \frac{1}{4t} \int_{0}^{1} [h''(\sqrt{t}) - h''(\sqrt{t}\theta)] d\theta\,.
	\end{align*}
	Since $h''$ is nondecreasing in $[0,1]$ we obtain $p''\geq 0$, which shows that $p$ is convex in $[0,1]$.
	
	Consider the simplex $\Lambda = \{ \lambda = (\lambda_1,..., \lambda_k) : \lambda_i \geq 0 \ \mbox{for} \ i=1,\ldots,k\,, \ \sum_{i=1}^{k} \lambda_i = 1\}$ and let $P: \Lambda \to \R$ given by
	$$
	P(\lambda) = \sum_{i = 1}^k p(\lambda_i).
	$$

Let $\lambda = (\lambda_1, ..., \lambda_k) \in \Lambda$. Using the convexity of $p$,  we can write
$$
p(\lambda_i) \geq p(\frac{1}{k}) + p'(\frac{1}{k})(\lambda_i - \frac{1}{k})
$$	 	
for each $i$. Then, we conclude that
$$
P(\lambda) \geq k p(\frac{1}{k}). 
$$

In particular, since $(\frac{1}{k}, ..., \frac{1}{k}) \in \Lambda$, we get
$$
\min \limits_{\lambda \in \Lambda} P(\lambda) = k p(\frac{1}{k}) = k \Big{(} \tilde g(a + b \frac{1}{\sqrt{k}}) + \tilde g(a - b \frac{1}{\sqrt{k}}) \Big{)}.
$$

When $k=N$, for each orthonormal set $\{ \xi_i \}_{i=1}^{N}$ we have $\sum_{i=1}^{N} \langle \hat x, \xi_i \rangle^2 = 1$, where $\hat x=\frac{x}{|x|}$. Using the last equality with $a = |x|^2 + \tau^2$, $b = 2|x|\tau $ and $\lambda_i = |\langle \hat x, \xi_i \rangle|^2$ we conclude that
\begin{align*}
& \sum_{i=1}^{N} \tilde g(|x|^2 + \tau^2 + 2\tau |x| |\langle \hat x, \xi_i \rangle|) + \tilde g(|x|^2 + \tau^2 - 2\tau |x| |\langle \hat x, \xi_i \rangle|) \\
\geq & N \Big{(}\tilde g(|x|^2 + \tau^2 + 2\tau |x| \frac{1}{\sqrt{N}}) + \tilde g(|x|^2 + \tau^2 - 2\tau |x| \frac{1}{\sqrt{N}}) \Big{)}.
\end{align*}

Again, substracting $2N\tilde g(|x|^2)$ in both sides, multiplying by $\tau^{-(1 + 2s)}$ and integrating, we see that
\begin{equation*}
NI(|x|, \frac{1}{\sqrt{N}}) \leq \sum_{i=1}^{N} \I_{\xi_i} u(x).
\end{equation*}

The infimum is attained. For this, let  consider $O: \R^N \to \R^N$ the orthonormal map so that $ O \hat x = \frac{1}{\sqrt{N}} \sum_{i=1}^{N} e_i$, where $\{ e_i \}_{i=1}^{N}$ the standard basis in $\R^N$. Set $\xi_i=O^{-1}e_i$. 

Then, using the rotation invariance of the operator, together with the radiality of the function we conclude that
\begin{equation*}
\sum_{i=1}^{N} \I_{\xi_i} u(x) = \sum_{i=1}^{N} \I_{O^{-1} e_i} u(x) = \sum_{i=1}^{N} \I_{e_i} u(O x) = N I(|x|, \frac{1}{\sqrt{N}}).
\end{equation*}

Now we deal with~\eqref{mainineq}. Let $\{ e_i \}_{i=1}^k $ the standard basis of $\R^k$. Since $P$ is convex in $\Lambda$, then we have
	$$
	\max_{\lambda \in \Lambda} P(\lambda) = \max_{i} P(e_i).
	$$	
	
	Observe that $P(e_i) = p(1) + (k - 1)p(0)$ for each $i$, from which we conclude that
	\begin{equation}\label{maxp}
	\max_{\lambda \in \Lambda} \sum_{i=1}^k [\tilde g(a + b\sqrt{\lambda_i}) + \tilde g(a - b\sqrt{\lambda_i})] = \tilde g(a + b) + \tilde g(a - b) + 2(k-1)\tilde g(a).
	\end{equation}

	Let $x \neq 0$ and $\{ \xi_i \}_{i=1}^k$ and orthonormal set in $\R^N$. 
	Let $\rho = \sum_{i=1}^{k} \langle \hat x, \xi_i \rangle^2 \leq 1$ and assume that $\rho > 0$. Denote $\lambda_i = \frac{\langle \hat x, \xi_i \rangle^2}{\rho}$.
	Then, for each $\tau > 0$, by~\eqref{maxp} with $a = |x|^2 + \tau^2$ and $b = 2 \sqrt{\rho} |x| \tau$, and using the monotonicity of $h$, we have
	\begin{align*}
	& \sum_{i=1}^k [\tilde g(|x|^2 + \tau^2 + 2 |x|\tau |\langle \hat x, \xi_i \rangle|) + \tilde g(|x|^2 + \tau^2 - 2 |x|\tau |\langle \hat x, \xi_i \rangle|)] \\
	= & \sum_{i=1}^k [\tilde g(|x|^2 + \tau^2 + 2 |x|\tau \sqrt{\rho \lambda_i}) + \tilde g(|x|^2 + \tau^2 - 2 |x|\tau \sqrt{\rho \lambda_i})] \\
	\leq & \tilde g(|x|^2 + \tau^2 + 2 \sqrt{\rho}|x|\tau) + \tilde g(|x|^2 + \tau^2 - 2 \sqrt{\rho}|x|\tau) + 2(k - 1) \tilde g(|x|^2 + \tau^2) \\
	\leq & \tilde g(|x|^2 + \tau^2 + 2 |x|\tau) + \tilde g(|x|^2 + \tau^2 - 2 |x|\tau) + 2(k - 1) \tilde g(|x|^2 + \tau^2)\,. \\
	\end{align*}
	
	Thus, we arrive at
	\begin{align*}
	& \sum_{i=1}^k [\tilde g(|x|^2 + \tau^2 + 2 |x|\tau |\langle \hat x, \xi_i \rangle|) + \tilde g(|x|^2 + \tau^2 - 2 |x|\tau |\langle \hat x, \xi_i \rangle|) - 2\tilde g(|x|^2)] \\
	\leq & \Big{(}\tilde g(|x|^2 + \tau^2 + 2 |x|\tau) + \tilde g(|x|^2 + \tau^2 - 2 |x|\tau) - 2 \tilde g(|x|^2) \Big{)} + 2(k - 1) \Big{(}\tilde g(|x|^2 + \tau^2) - \tilde g(|x|^2) \Big{)},
	\end{align*}
	from which, after multiplying by the kernel $\tau^{-(1 + 2s)}$ and integration, we get
	\begin{align}\label{last1}
	\sum_{i=1}^{k} \I_{\xi_i} u(x) \leq I(|x|, 1) + (k  -1)I(|x|, 0).
	\end{align}

	When $\rho = 0$, we use again the monotonicity of $h$ (with $a = |x|^2 + \tau^2$ and $b = 2\tau |x|$) to conclude 
	 	\begin{align*}
	 	& \sum_{i=1}^k [\tilde g(|x|^2 + \tau^2 + 2 |x|\tau |\langle \hat x, \xi_i \rangle|) + \tilde g(|x|^2 + \tau^2 - 2 |x|\tau |\langle \hat x, \xi_i \rangle|) - 2\tilde g(|x|^2) - 2\tilde g(|x|^2)] \\
	 	= & 2k [\tilde g(|x|^2 + \tau^2) - \tilde g(|x|^2)] \\
	 	\leq & \Big{(}\tilde g(|x|^2 + \tau^2 + 2 |x|\tau) + \tilde g(|x|^2 + \tau^2 - 2 |x|\tau) - 2 \tilde g(|x|^2) \Big{)} + 2(k - 1) \Big{(}\tilde g(|x|^2 + \tau^2) - \tilde g(|x|^2) \Big{)},
	 	\end{align*}
	 	which leads us to~\eqref{last1} as well. Noting that if we pick an orthonormal set $\{ \bar \xi_i \}_{i=1}^k$ such that $\hat x = \bar \xi_1$, we see that
	 	$$
	 	\sum_{i=1}^{k} \I_{\bar \xi_i} u(x) = I(|x|,1) + (k - 1)I(|x|,0)
	 	$$
	 	and this concludes~\eqref{mainineq}.
	 	\end{proof}

 As a consequence of the above result we have the following representation formulas.
\begin{teo}\label{propPk}
		Assume $u(x) = \tilde g(|x|^2)$ with $\tilde g$ satisfying the assumptions of Lemma~\ref{lemaI}. Let $x \neq 0$ and denote $\hat x = x/|x|$.
	
	\begin{itemize}
		\item[$(i)$] For all $N, k \in \N$ with $1 \leq k \leq N$ we have 
		\begin{equation*}
		\I_k^+u(x) = \I_{\hat x}u(x) + (k - 1) \I_{x^\perp}u(x),
		\end{equation*}
		where $x^\perp \in V_x$ with $|x^\perp|=1$.
		
		\medskip
		
		\item[$(ii)$] If $1 \leq k < N$ we have
		\begin{equation*}
		\I_k^-u(x) = k \I_{x^\perp}u(x),
		\end{equation*}
		where $x^\perp$ is as in the previous point, and
		\begin{equation*}
		\I_N^-u(x) = N \I_{\xi^*}u(x),
		\end{equation*}
		where $\xi^* \in \R^N$ is a unit vector such that $\langle \hat x, \xi^* \rangle = \frac{1}{\sqrt{N}}$.
	\end{itemize}
\end{teo}

\begin{remark}\label{rmkg}
	It is easy to see that examples of functions $\tilde g$ satisfying the above assumptions are $\tilde g(t) = {\sqrt{t}}^{-\gamma}$ with $\gamma \in (0,1)$,  $\tilde g(t) = {\left(a + \sqrt{t}\right)}^{-\gamma}$, $\tilde g(t)={\left(a+t\right)}^{-\gamma}$, $\tilde g(t)=e^{-at}$ for $a > 0$ and $\gamma > 0$.  Another example is the function $\tilde g(t) = -{\sqrt{t}}^\gamma$ for $\gamma \in (0, 2s)$.
\end{remark}

\subsection{Computation on power-type functions.}

We start with the following lemma that can be found in~\cite{BCGJ}, but that we present here for the readers convenience.
\begin{lema}\label{lema2}
	For $\gamma>0$, denote $v_\gamma(x) = |x|^\gamma$. Then for $s \in (0, 1)$ and $\gamma\in(0, 2s)$, there exists a constant $\hat c(\gamma) \in \R$  such that 
	\begin{equation*}
	\I_{\hat x} v_\gamma(x) = \hat c(\gamma)|x|^{\gamma - 2s} \quad \mbox{for all} \ x \neq 0
	\end{equation*}
	and
	\begin{itemize}
		\item $\hat c(\gamma)< 0$ if $\gamma \in \left(0, (2s - 1)_+\right)$
		
		\item $\hat c(\gamma) = 0$ if $\gamma = (2s - 1)_+ $
		
		\item $\hat c(\gamma) > 0$ of $\gamma \in ((2s - 1)_+, 2s)$.
	\end{itemize}
	Similarly, for $\gamma\in(0,1)$, denote $w_\gamma(x) = |x|^{-\gamma}$. Then for $s \in (0, 1)$ there exists a constant $\hat c(\gamma) \in \R$  such that 
	\begin{equation*}
	\I_{\hat x} w_\gamma(x) = \hat c(\gamma)|x|^{\gamma - 2s} \quad \mbox{for all} \ x \neq 0
	\end{equation*}
	and 
	\begin{itemize}
		\item $\hat c(\gamma)< 0$ if $\gamma \in \left(0, (1-2s)_+\right)$
		
		\item $\hat c(\gamma) = 0$ if $\gamma = (1-2s)_+$
		
		\item $\hat c(\gamma) > 0$ of $\gamma \in ((1-2s)_+,1)$.
	\end{itemize}
\end{lema}

\begin{proof}
We only consider the case $\I_{\hat x} v_\gamma(x)$, since the proof concerning $\I_{\hat x} w_\gamma(x)$  follows the same ideas.
	We notice that $\I_{\hat x} v_\gamma(x)$ is well-defined since $v_\gamma\in L^1_{1,2s}$. We have
	\begin{align*}
	\I_{\hat x} v_\gamma(x)  = C_s \mathrm{P.V.} \int_{-\infty}^{+\infty} [|x + \tau \hat x|^\gamma - |x|^\gamma] |\tau|^{-(1 + 2s)}d\tau 
	= C_s |x|^{\gamma - 2s} I, 
	\end{align*}
	where
	$$
	I := \mathrm{P.V.} \int_{-\infty}^{+\infty} [|1 + \tau|^\gamma - 1] |\tau|^{-(1 + 2s)}d\tau.
	$$
	
	We split the last integral as
	$$
	I = \int_{-\infty}^{-1} [|1 + \tau|^\gamma - 1] |\tau|^{-(1 + 2s)}d\tau + \mathrm{P.V.} \int_{-1}^{+\infty} [|1 + \tau|^\gamma - 1] |\tau|^{-(1 + 2s)}d\tau,
	$$
	and using the change of variables $1 + \tau = -e^{z}, z \in \R,$ for the first integral, and $1 + \tau = e^z, z \in \R,$ for the second, we obtain
	\begin{align*}
	I = & \int_{-\infty}^{+\infty} [e^{z\gamma} - 1] (1 + e^z)^{-(1 + 2s)} e^z dz + \mathrm{P.V.} \int_{-\infty}^{+\infty} [e^{\gamma z} - 1] |e^z - 1|^{-(1 + 2s)} e^zdz \\
	= & 2^{-2s} \int_{-\infty}^{+\infty} e^{z((\gamma + 1)/2 -s)} \sinh(\gamma z /2) (\cosh(z/2))^{-(1 + 2s)} dz \\
	& + 2^{-2s} \mathrm{P.V.} \int_{-\infty}^{+\infty} e^{z ((\gamma + 1)/2 -s )}\sinh(\gamma z /2)|\sinh(z/2)|^{-(1 + 2s)}dz\,.
	\end{align*}
	
	Notice that $I = 0$ when $\frac{\gamma + 1}{2} - s= 0$, that is $\gamma = 2s-1$.
	Using the symmetry of the integral $I$ we obtain the result with
	$$\hat c(\gamma)=C_s \mathrm{P.V.} \int_{-\infty}^{+\infty} [|1 + \tau|^\gamma - 1] |\tau|^{-(1 + 2s)}d\tau.$$
\end{proof}

\medskip

Using Theorem~\ref{propPk} we have the following identity
\begin{prop}\label{propbargamma}
Let $\gamma \in (0,1)$ and denote $w_\gamma(x) = |x|^{-\gamma}$ for $x \neq 0$. Then
\begin{equation}\label{0604eq1}
\I_k^+w_\gamma(x) = c_k(\gamma) |x|^{-(\gamma + 2s)}, 
\end{equation}
where $c_k(\gamma) = \hat c(\gamma) + (k - 1)c^\perp(\gamma)$ with
\begin{align*}
\hat c(\gamma) & := C_s \mathrm{P.V.} \int_{-\infty}^{+\infty} [|1 + \tau|^{-\gamma} - 1] |\tau|^{-(1 + 2s)} d\tau \\
c^\perp(\gamma) & := 2 C_s \int_{0}^{+\infty} [(1 + \tau^2)^{-\gamma/2} - 1] \tau^{-(1 + 2s)} d\tau.
\end{align*}

 For $k \geq 1$, the function $c_k:(0,1) \to \R$ satisfies $c_k(0^+) = 0$, $c_k(1^-) = +\infty$, it is strictly convex in $(0,1)$ and there exists a unique $\bar \gamma=\bar\gamma(k,s) \in (0,1)$ such that $c_k(\bar \gamma) = 0$ in the following cases: 
\begin{itemize}
	\item [(i)] $k=1$ and $s\in(0,\frac12)$
	\item[(ii)] $k\geq2$ and $s\in(0,1)$.
\end{itemize}
\end{prop}
\begin{proof}
Formula \eqref{0604eq1} follows directly by the characterization provided in Theorem~\ref{propPk} and the fact that for each $x \neq 0$ we have
\begin{align*}
\I_{\hat x} w_\gamma(x) & = \hat c(\gamma) |x|^{-(\gamma + 2s)} \\
\I_{\hat x^\perp} w_\gamma(x) & = c^\perp(\gamma) |x|^{-(\gamma + 2s)},
\end{align*}
where we have used the homogeneity of the nonlocal operator and the function $w$. 

Using Dominated Convergence Theorem, for each $k$ we have
$$
c_k(\gamma) \to 0 \quad \mbox{as} \ \gamma \to 0^+.
$$

Hence, defining $c_k(0) = 0$, we have $c_k:[0,1) \to \R$ is a continuous function.

On the other hand, noticing that $c^\perp(\gamma)$ is uniformly bounded for $\gamma \in (0,1)$ and that $\hat c(\gamma) \to +\infty$ as $\gamma \to 1^-$, we have
$
c_k(\gamma) \to +\infty 
$
as $\gamma \to 1^-$.

In addition, $c_k \in C^2(0,1)$ and for $\gamma \in (0,1)$ we see that
\begin{align*}
c_k'(\gamma) = & -C_s\left( \int_{0}^{+\infty} \Big{[} |1 + \tau|^{-\gamma} \ln |1 + \tau| + |1 - \tau|^{-\gamma} \ln |1 - \tau| \Big{]} \tau^{-(1 + 2s)} d\tau\right. \\
&\qquad\quad \left.+ (k - 1) \int_{0}^{+\infty} (1 + \tau^2)^{-\gamma/2} \ln (1 + \tau^2) \tau^{-(1 + 2s)} d\tau\right) \\
c_k''(\gamma) = &\;\, C_s\left( \int_{0}^{+\infty} \Big{[} |1 + \tau|^{-\gamma} \ln^2 |1 + \tau| + |1 - \tau|^{-\gamma} \ln^2 |1 - \tau| \Big{]} \tau^{-(1 + 2s)} d\tau\right. \\
& \qquad\quad \left.+ \frac{k - 1}{2} \int_{0}^{+\infty} (1 + \tau^2)^{-\gamma/2} \ln^2 (1 + \tau^2) \tau^{-(1 + 2s)} d\tau\right)
\end{align*}
and from here we clearly have $c_k'' > 0$. Hence $c_k$ is a convex function in $[0,1)$.

Now we prove the existence of a unique $\bar \gamma\in(0,1)$ such that $c_k(\gamma)<0 $ for $\gamma\in(0,\bar\gamma)$, $c_k(\bar\gamma)=0 $ and $c_k(\gamma)>0$ for $\gamma\in(\bar\gamma,1)$.

The case $k=1$ and $s\in(0,\frac12)$ trivially follows from Lemma \ref{lema2}, since 
$$
c_1(1-2s)=\hat c(1-2s)=0.
$$
In this case $\bar\gamma=1-2s$,  $c_1(\gamma)<0 $ if $\gamma\in(0,1-2s)$ and $c_1(\gamma)>0 $ if $\gamma\in(1-2s,1)$.

In what follows we assume $k\geq2$. If $s\in(0,\frac12)$ we have
$$
c_k(1-2s)=\hat c(1-2s)+c^\perp(1-2s)=c^\perp(1-2s)<0.
$$
Then, by   convexity of $c_k$,  there exists a unique $\bar\gamma\in(1-2s,1)$ such that $c_k(\bar\gamma)=0$. Moreover $c_k(\gamma)<0$ for $\gamma\in(0,\bar\gamma)$ and $c_k(\gamma)>0$ for $\gamma\in(\bar\gamma,1)$.

Now we consider $s\in[\frac12,1)$. It is easy to see that $c_k'(0^+)$ exists and we have the expression
\begin{align*}
c_k'(0^+) = & -C_s\left(\int_{0}^{+\infty} \ln |1 - \tau^4| \tau^{-(1 + 2s)} d\tau\right. \\
& \qquad\quad\left.+\, (k - 2) \int_{0}^{\infty} \ln (1 + \tau^2) \tau^{-(1 + 2s)} d\tau\right) \quad \mbox{for} \ k \geq 2.
\end{align*}

We claim that 
\begin{equation}\label{daMatteo}
{ c_k'(0^+) < 0 \quad \mbox{for all} \  k\geq2.}
\end{equation}
From \eqref{daMatteo} we easily obtain the result, again by means of the convexity of $c_k$.

To complete the proof it remains to show \eqref{daMatteo}. Since $c_{k+1}'(0^+)< c_k'(0^+)$ for any $k\geq2$, it is then sufficient to prove the claim for $k=2$. 

Note that 
$$
c_2'(0^+)=-\frac{C_s}{2}F(s),
$$
where
\begin{equation}\label{2603eq1}
F(s)=\int_0^{+\infty}\ln|1 - \tau^2| \tau^{-(1 + s)} d\tau.
\end{equation}

The function $F:[1/2,1] \to \R$ is well defined, and we shall prove that $F(s)>0$. 

A straightforward computation leads us to
\begin{equation}\label{2603eq2}
F(1)=0.
\end{equation}
Moreover for any $s\in\left[\frac12,1\right]$ and for a.e. $\tau\in(0,+\infty)$
\begin{equation}
\begin{split}\label{2603eq3}
\left|\frac{\ln|1 - \tau^2|}{ \tau^{1 + s}}\right|&\leq\left|\ln|1 - \tau^2|\right|\max\left\{\frac{1}{\tau^2},\frac{1}{\tau^{3/2}}\right\}\in L^1\left((0,+\infty)\right) \\
\left|\frac{\partial}{\partial s}\frac{\ln|1 - \tau^2|}{ \tau^{1 + s}}\right|&=\left|\frac{\ln|1 - \tau^2|\ln\tau}{ \tau^{1 + s}}\right|\\
&\leq\left|\ln|1 - \tau^2|\ln\tau\right|\max\left\{\frac{1}{\tau^2},\frac{1}{\tau^{3/2}}\right\}\in L^1\left((0,+\infty)\right)
\end{split}
\end{equation}
By \eqref{2603eq3}, $F\in C^1\left(\left[\frac12,1\right]\right)$ and via integrations by parts we obtain
\begin{equation}\label{2603eq4}
\begin{split}
F'(s)&=-\int_0^{+\infty}\frac{\ln|1 - \tau^2|\ln\tau}{ \tau^{1 + s}}d\tau\\
&=\frac2s\int_0^{+\infty}\frac{\tau^{1-s}\ln\tau}{1-\tau^2}d\tau-\frac1sF(s)\\
&\leq-\frac2sI-\frac1sF(s),
\end{split}
\end{equation}
where 
$$
\int_0^{+\infty}\frac{\tau^{1-s}\ln\tau}{1-\tau^2}d\tau\leq-I:=\int_0^1\frac{\tau\ln\tau}{1-\tau^2}d\tau+\int_1^{+\infty}\frac{\ln\tau}{1-\tau^2}d\tau < 0.
$$

From \eqref{2603eq4} we have
$$
(sF(s))'\leq -2I \quad\text{for $s\in\left[\frac12,1\right]$}.
$$
Integrating the above inequality between $s$ and $1$, and recalling \eqref{2603eq2}, we obtain
$$
F(s)\geq 2\,I\,\frac{1-s}{s}
$$  
which in particular implies that $F(s)>0$ for any $s\in[1/2,1)$.
\end{proof}

\begin{remark}
In order to give an estimate of $\bar \gamma$, we  mention that a tedious, but straightforward computation shows that if we compute $ \hat c $  and $c^\perp$ at $\gamma=2(1-s)$, we get
\begin{equation*}\label{daMarcello}
\hat c(2(1 - s)) = C_s\frac{1}{s(2s - 1)}, \quad c^\perp(2(1 - s)) = -C_s\frac{1}{s}.
\end{equation*}
Thus, when $k=2$ we have
$
c_2\left(2(1 - s)\right) = C_s \frac{2(1 - s)}{s(2s - 1)} > 0,
$
and therefore $\bar \gamma < 2(1 - s)$.
\end{remark}

\section{Liouville-type results for $\I_k^{\pm}$}
\label{secLiouville}

In this section we will prove a certain number of theorem of Liouville type i.e. of classifications of entire 
solutions or supersolutions that are bounded from below.

\subsection{Liouville results for superharmonic functions}
We state the results for $\I_k^+$. A dual result concerning $\I_k^-$ can be also given, but we omit the 
details. The computations in Proposition \ref{propbargamma} play a crucial role.
	\begin{teo}\label{linearLiouville}
		Consider the equation
		\begin{equation}\label{eqharmonic}
		\I_k^+ u = 0 \quad \mbox{in} \ \R^N.
		\end{equation}
		\begin{itemize}
			\item[(i)] { If $s \in [1/2, 1)$ and $k = 1$}, every viscosity supersolution $u$ to problem~\eqref{eqharmonic} which is bounded from below, is a constant.			
			\item[(ii)]  If $s \in (0,1/2)$ and $k = 1$, or $s \in (0,1)$ and $k \in \N$ with $2 \leq k \leq N$, then there exists nontrivial viscosity supersolution bounded from below to equation~\eqref{eqharmonic}.
		\end{itemize}
	\end{teo}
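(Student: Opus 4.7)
When $k\geq 2$, Proposition~\ref{propbargamma} furnishes $\bar\gamma\in(0,1)$ with $c_k(\gamma)<0$ for $\gamma\in(0,\bar\gamma)$. My plan is to fix such a $\gamma$, set $w_\gamma(x):=|x|^{-\gamma}$, and propose the truncation
\[
u(x):=\min\bigl\{w_\gamma(x),\,1\bigr\}
\]
as the required nontrivial supersolution, which is positive, bounded, and nonconstant. The viscosity supersolution property would then be checked case by case. For $|x_0|>1$, $u$ locally coincides with $w_\gamma$ and globally $u\leq w_\gamma$ with equality at $x_0$; monotonicity of each $\I_\xi$ in its argument yields $\I_\xi u(x_0)\leq \I_\xi w_\gamma(x_0)$ for every $\xi$, so $\I_k^+u(x_0)\leq \I_k^+w_\gamma(x_0)=c_k(\gamma)|x_0|^{-(\gamma+2s)}<0$. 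For $|x_0|<1$, $u\equiv 1$ locally, so any smooth test function touching from below has a local maximum at $x_0$, giving $\I_{\xi,\delta}\varphi(x_0)\leq 0$; combined with $u\leq 1=u(x_0)$ on $\R^N$ (which makes $\I^\delta_\xi u(x_0)\leq 0$), every summand in the viscosity definition is nonpositive.

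\textbf{Part (i).} Here the plan is to reduce the $\R^N$ problem to a one-dimensional Liouville theorem. Since $\I_1^+u=\sup_{\xi\in\mathbb S^{N-1}}\I_\xi u$, the viscosity inequality $\I_1^+u\leq 0$ is equivalent to $\I_\xi u\leq 0$ in the viscosity sense for every unit $\xi$. I would fix $\xi$ and $x_0$ and study the one-variable function $v(t):=u(x_0+t\xi)$. Lifting any 1D test function $\psi$ touching $v$ from below at $t_0$ to an $N$-dimensional test function of the form
\[
\varphi(y):=\psi\bigl(\langle y-x_0,\xi\rangle\bigr)-C\bigl|y-x_0-\langle y-x_0,\xi\rangle\xi\bigr|^2,
\]
with $C$ chosen large enough (in terms of the scale of the touching neighbourhood) to enforce $\varphi\leq u$ locally, the viscosity supersolution inequality for $u$ against $\I_\xi$ translates into the viscosity supersolution inequality for $v$ against the one-dimensional fractional Laplacian $(-\Delta)^s$ on $\R$; this works because $\varphi$ reduces to $\psi$ along the line through $x_0+t_0\xi$ parallel to $\xi$, and the nonlocal tail only involves values of $u$. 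Since $v$ is continuous and bounded from below, the 1D Liouville theorem for viscosity supersolutions of $(-\Delta)^s v=0$ (valid because $N=1<2s$) forces $v$ to be constant; arbitrariness of $\xi$ and $x_0$ then implies $u$ is constant on $\R^N$.

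\textbf{Main obstacle.} The heart of the argument is the one-dimensional Liouville statement. Heuristically it holds because the fundamental solution of $(-\Delta)^s$ in $\R$ is $|t|^{2s-1}$, which is \emph{unbounded} at infinity when $s>1/2$; consequently any nontrivial Riesz measure associated to a superharmonic function would produce a function growing to infinity rather than remaining bounded from below. My plan would be to first use the one-dimensional strong minimum principle (an analogue of Proposition~\ref{propSMP}) to reduce to the case where $\inf v$ is not attained, and then apply a Poisson representation on expanding intervals—equivalently, comparison with the growing fundamental solution—to show that $\liminf_{|t|\to\infty}v=\inf v$ forces $v\equiv\inf v$, contradicting nonconstancy. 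This is where the hypothesis $s>1/2$ is truly essential.
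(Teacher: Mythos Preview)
Your Part~(ii) is essentially the paper's argument, carried out by hand: the paper simply invokes the principle that the minimum of two supersolutions (here $w_\gamma$ and the constant $1$) is again a supersolution. Your case analysis is fine, though you omit $|x_0|=1$; that case is covered automatically by the same principle, since $u\le w_\gamma$ globally with equality at such points and $w_\gamma$ is a classical strict supersolution there.

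Your Part~(i) takes a genuinely different route, and the lifting step has a real gap. You need every one-dimensional $C^2$ test $\psi$ touching $v(t)=u(x_0+t\xi)$ from below at $t_0$ to lift to an $N$-dimensional $C^2$ test touching $u$ from below at $x_0+t_0\xi$. Your candidate $\varphi(y)=\psi(\langle y-x_0,\xi\rangle)-C\,|y-x_0-\langle y-x_0,\xi\rangle\xi|^2$ satisfies $\varphi\le u$ near $x_0+t_0\xi$ only if
\[
u(x_0+t\xi)-u(x_0+t\xi+w)\ \le\ C|w|^2 \qquad (w\perp\xi,\ |w|\ \text{small}),
\]
and a merely lower-semicontinuous supersolution can drop faster than any quadratic in the transverse directions (think of a profile like $-|w|^{1/2}$), so no finite $C$ need work. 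Without a valid lifting you cannot conclude that $v$ is a one-dimensional viscosity supersolution, and the reduction to the $1$D Liouville theorem (which you also leave only sketched) is not justified. Repairing this would require an extra regularization layer (e.g.\ inf-convolutions) together with a stability argument; that is doable but nontrivial.

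The paper bypasses all of this by running the barrier argument directly in $\R^N$. For $\gamma\in(0,2s-1)$, Lemma~\ref{lema2} gives $\I_{\hat x}(-|x|^\gamma)\ge 0$, hence $\I_1^+(-|x|^\gamma)\ge 0$ for $x\neq 0$. The rescaled comparison function
\[
\phi(x)=m(1)\,\frac{-|x|^\gamma+R^\gamma}{R^\gamma}
\]
is then a viscosity subsolution in $\{1<|x|<R\}$, lies below $u$ on the complement, and the comparison principle plus $R\to\infty$ yields $u\ge m(1)$ in all of $\R^N$; the strong minimum principle (Proposition~\ref{propSMP} and the remark following it) forces $u$ to be constant. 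This is precisely the ``comparison with the growing fundamental solution'' you describe in your last paragraph, but executed with the radial barrier $-|x|^\gamma$ in $\R^N$ rather than after a reduction to $\R$, so no test-function lifting is ever needed.
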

	\begin{proof}
	(i). By adding a constant, we can assume that $u \geq 0$. Consider first the case $s>\frac12$ and fix $\gamma \in (0, 2s - 1)$. Let $w_\gamma(x)=w_\gamma(|x|)=-|x|^\gamma$. 
	By Lemma \ref{lema2},  $\I_{\hat x}w_\gamma(x)\geq0$ for any $x\neq0$. In particular, we have that
		\begin{equation*}
		\I_1^+ w_\gamma(x) \geq 0 \quad \mbox{for} \ |x| \geq 1.
		\end{equation*}
		Thus, for every $R>1$ and denoting $\displaystyle m(1)=\min_{x\in \overline B_1}u(x)$, the function $\phi$ defined as 
		\begin{equation*}
		\phi(x) = m(1)\, \frac{w_\gamma(|x|) - w_\gamma(R)}{-w_\gamma(R)},	
		\end{equation*}
is a viscosity subsolution to~\eqref{eqharmonic} for $1<|x|<R$ and moreover  $\phi\leq u$ for $|x|\leq1$ and for 
$|x|\geq R$ . Then, by comparison principle, we have $\phi \leq u$ in $\R^N$. Thus, for each $|x|>1$ fixed, we 
let $R \to +\infty$ and then
		\begin{equation*}
		u(x) \geq m(1),
		\end{equation*}
from which we infer that $u \geq m(1)$ in $\R^N$. By the strong minimum principle, see  
Proposition~\ref{propSMP}, we conclude that $u$ is constant.	

In the case $s=\frac12$, let $w(x)=-\log(1+|x|)$. By  Theorem \ref{propPk}-(i) we have
\begin{equation}\label{0804eq1}
\I_1^+ w(x)=\I_{\hat x}w(x).
\end{equation}
A direct computation shows that 
\begin{equation}\label{0804eq2}
\I_{\hat x}w(x)=C_s\left(\frac{1}{1+|x|}\log(1+2|x|)+\log\frac{|x|^2}{1+2|x|}\right)
\end{equation}
and, if $x\geq1+\sqrt{2}$, it turns out that $\I_{\hat x}w(x)\geq0$. Thus, from \eqref{0804eq1}-\eqref{0804eq2},  we have
$$
\I^+_1w(x)\geq0 \quad \mbox{for} \ |x| \geq 1+\sqrt{2}.
$$
For any $R>1+\sqrt{2}$ set 
$$
\phi(r)=m(1+\sqrt{2})\frac{\log(1+R)-\log(1+|x|)}{\log(1+R)},
$$
where $m(1+\sqrt{2})=\displaystyle \min_{\overline B_{1+\sqrt{2}}}u$. As above, $\phi$ is a viscosity subsolution to~\eqref{eqharmonic} for $1+\sqrt{2}<|x|<R$ such that $\phi\leq u$ in $\mathbb R^N$. Sending $R\to+\infty$, we infer that $u$ is constant.

	\medskip
	
	\noindent
	(ii). Let $\bar \gamma$ be the constant given by Proposition~\ref{propbargamma} and consider for 
$0 < \gamma < \bar \gamma$ the function $u(x)=\min\{ 1, |x|^{-\gamma}\}$. By the basic principle saying that 
minima of supersolutions is supersolution, then $u$ is a nontrivial viscosity supersolution 
to~\eqref{eqharmonic} which is bounded from below.
		%
		%
		%
		%
		%
		%
	\end{proof}
Recall that $\bar \gamma \to 0$ and that $\I_2^+\to \PP_2^+$ as $s \to 1^-$. So even if Liouville type Theorems are valid for $\PP_2^+$ see Theorem 2.2 in~\cite{BGL}, the result of Theorem \ref{linearLiouville} are not in contradiction, since in a certain sense the solution construct here converges to the trivial solution.
\medskip


%
	

\subsection{Liouville-type result for the maximal operator $\I_k^+$}
In this subsection we assume $k=1$ and $s \in (0,1/2)$, or $k\geq2$ and $s\in(0,1)$. The aim is to prove the following
\begin{teo}\label{teoLiouville+}
	Let $\bar \gamma \in (0,1)$ be as in Proposition~\ref{propbargamma}. The equation 
	\begin{equation}\label{2004eq6}
	\I_k^+ u(x)+u^p(x)=0\quad\text{in $\R^N$}
	\end{equation}
	has nontrivial viscosity supersolutions if, and only if 
	$$
	p>1+\frac{2s}{\bar\gamma}.
	$$
\end{teo}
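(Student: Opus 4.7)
The critical exponent $p^\star:=1+2s/\bar\gamma$ arises from matching the scaling exponent $2s/(p-1)$ of the semilinear term to the exponent $\bar\gamma$ of the $\I_k^+$-harmonic fundamental solution $|x|^{-\bar\gamma}$ from Proposition~\ref{propbargamma}. I handle the two implications separately, both exploiting the change of sign of $c_k(\gamma)$ across $\gamma=\bar\gamma$.

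\textbf{Existence direction} ($p>p^\star$): Pick $\gamma\in(2s/(p-1),\bar\gamma)$, a nonempty interval since $p-1>2s/\bar\gamma$, so $c_k(\gamma)<0$. The candidate supersolution is $u(x)=c\min\{1,|x|^{-\gamma}\}$ for $c>0$ small to be determined. On $\{|x|>1\}$ the pointwise bound $u(y)\leq c|y|^{-\gamma}$ combined with Proposition~\ref{propPk} and monotonicity of the principal-value integrand yields $\I_k^+u(x)\leq c\,c_k(\gamma)|x|^{-\gamma-2s}$; since $\gamma p>\gamma+2s$ and $|x|>1$, smallness of $c$ then makes $\I_k^+u+u^p\leq 0$. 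On $\{|x|<1\}$, $u$ is locally constant, so the constant test function $\phi\equiv c$ is admissible, and the uniform bound $|x_0+\tau\xi|\geq|\tau|/2$ for $|\tau|>2$ and $x_0\in\bar B_1$ produces a uniform estimate $\sum_i\I_{\xi_i}u(x_0)\leq -kcF_1$ for some $F_1>0$ independent of $x_0$ and the orthonormal frame, so again smallness of $c$ absorbs $c^p$. At $|x|=1$ the corner between the two pieces of $u$ admits no $C^1$ test function from below, so the supersolution condition is vacuous there.

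\textbf{Nonexistence direction} ($p\leq p^\star$): Let $u\geq 0$ be a nontrivial viscosity supersolution. Since $\I_k^+u\leq -u^p\leq 0$, $u$ is a supersolution of $\I_k^+u=0$, hence of $\I_N^-u=0$ by implication~\eqref{1509eq1}; combining Proposition~\ref{propSMP} with the observation that a positive constant cannot satisfy $\I_k^+v+v^p\leq 0$ forces $u>0$ on $\R^N$, and I set $m_1:=\min_{\partial B_1}u>0$. The central step is the lower bound $u(x)\geq m_1|x|^{-\bar\gamma}$ on $\{|x|\geq 1\}$, obtained via annular comparison on $B_R\setminus\bar B_1$ with the radial barrier
\[
\phi_{R,\gamma}(x)=\begin{cases}m_1 & |x|\leq 1,\\[1pt] m_1\,\dfrac{|x|^{-\gamma}-R^{-\gamma}}{1-R^{-\gamma}} & 1\leq|x|\leq R,\\[6pt] 0 & |x|\geq R,\end{cases}
\]
for any $\gamma>\bar\gamma$. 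Since $c_k(\gamma)>0$ in this range, the pure-power contribution to $\I_k^+\phi_{R,\gamma}$ at a point of the annulus is the positive quantity $A\,c_k(\gamma)|x|^{-\gamma-2s}$ with $A=m_1/(1-R^{-\gamma})$, while the signed corrections from the truncations on $\bar B_1$ and $\R^N\setminus B_R$ are controlled to be negligible for $R$ large by direct integral estimates; thus $\phi_{R,\gamma}$ is a viscosity subsolution of $\I_k^+v+v^p=0$ on the annulus. The comparison principle from Section~\ref{sec1} then yields $\phi_{R,\gamma}\leq u$ there, and passing $R\to\infty$ and $\gamma\to\bar\gamma^+$ gives the claim.

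Inserting this lower bound into the equation produces $\I_k^+u(x)\leq -m_1^p|x|^{-\bar\gamma p}$, and iterating the annular comparison with this improved source term drives the decay exponent through the recursion $\gamma_{n+1}=p\gamma_n-2s$ starting from $\gamma_0=\bar\gamma$: for $p<p^\star$ this sequence is strictly decreasing (since $\gamma_0$ lies on the side of the fixed point $2s/(p-1)>\bar\gamma$ from which the recursion moves away), and in finitely many steps the lower bound on $u$ forces a growth at infinity incompatible with $u\in L^1_{2s}(\R^N)$, which is required for $\I_k^+u$ to be well defined in the viscosity sense, giving the contradiction. The critical case $p=p^\star$ is the fixed point of the iteration and is treated by an extra logarithmic refinement in the spirit of~\cite{BGL}. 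The main obstacle will be making the successive annular comparisons rigorous: the nonlocal extension of the power function by constants outside the annulus modifies $\I_k^+$ at interior points, and this correction must be shown at every iteration step to be dominated by $A\,c_k(\gamma_n)|x|^{-\gamma_n-2s}$, which requires careful tracking of the integrals on the truncated regions and an appropriate tuning of the coefficient $A$ and radius $R$ at each stage.
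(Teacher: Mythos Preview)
Your existence argument is essentially correct and close in spirit to the paper's, which uses the smooth profile $(1+|x|)^{-2sq}$ rather than your truncated power; your corner analysis at $|x|=1$ is fine.

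The nonexistence direction, however, has a genuine gap. Your iteration $\gamma_{n+1}=p\gamma_n-2s$ does drive $\gamma_n\to-\infty$, but the barrier machinery breaks down long before you reach ``growth incompatible with $L^1_{2s}$'': once $\gamma_n\leq 0$ the candidate comparison function $A|x|^{-\gamma_n}$ grows, and the annular comparison (which needs $\phi\leq 0$ near the outer boundary $|x|=R$) is unavailable. Even within $(0,\bar\gamma)$, the truncation corrections you dismiss as ``negligible for $R$ large'' include a contribution from the inner cap $\{|y|\leq 1\}$ along the radial line that is of order $|x|^{-(1+2s)}$ and does \emph{not} vanish with $R$; for $|x|$ near $1$ this can dominate $c_k(\gamma)|x|^{-(\gamma+2s)}$. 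This is why the paper truncates $|x|^{-\gamma}$ at $|x|=\varepsilon$ with $\varepsilon$ small (Lemma~\ref{lem1}) rather than at $|x|=1$. Finally, your passage $\gamma\to\bar\gamma^+$ loses the constant: in the paper's bound $m(r)\geq c(\gamma)m(1)r^{-\gamma}$, the constant $c(\gamma)$ depends on $c_k(\gamma)^{1/(1-\gamma)}\to 0$.

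The paper avoids iteration altogether. After a single lower bound $m(r)\geq c\,m(1)r^{-\gamma}$ for fixed $\gamma>\bar\gamma$, it obtains an \emph{upper} bound $m(R)^{p-1}\leq CR^{-2s}$ by touching $u$ from below with a rescaled cutoff $m(R/2)\eta(|x|/R)$ at a contact point $x_R\in B_R\setminus B_{R/2}$, using the equation and a doubling estimate $m(R)\geq c\,m(R/2)$ (Lemma~\ref{lem2}). The two bounds combine to give $m(1)\leq CR^{\gamma-2s/(p-1)}$, and choosing $\gamma\in(\bar\gamma,2s/(p-1))$ (possible precisely when $p<p^\star$) forces $m(1)=0$. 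The critical case does use a logarithmic barrier as you anticipate, but it rests on the sharp bound $m(r)\geq \bar c\,r^{-\bar\gamma}$ of Lemma~\ref{lem1}(ii), which itself already requires the cutoff upper bound as input. That upper bound is the missing idea in your scheme.
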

We divide the proof in several partial results. We start with the sufficient condition in the previous theorem.
\begin{prop}\label{LiouvillePlus}
	For any $p>1+\frac{2s}{\bar\gamma}$ there exist positive viscosity supersolutions of \eqref{2004eq6}.
\end{prop}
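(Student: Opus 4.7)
The plan is to exhibit an explicit positive supersolution of the form
\[
u(x) = A\min\{1, |x|^{-\gamma}\}, \qquad \gamma := \frac{2s}{p-1},
\]
for a small constant $A > 0$. The choice of $\gamma$ serves two purposes: the hypothesis $p > 1 + \frac{2s}{\bar\gamma}$ translates exactly into $\gamma \in (0, \bar\gamma)$, so that Proposition~\ref{propbargamma} yields $c_k(\gamma) < 0$ and $\I_k^+(|\cdot|^{-\gamma})(x) = c_k(\gamma)|x|^{-(\gamma+2s)}$ for $x \neq 0$; moreover the identity $\gamma p = \gamma + 2s$ makes both $\I_k^+ u$ and $u^p$ homogeneous of the same order in $|x|$, which is exactly what allows the two terms to be matched.

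On the region $|x| > 1$, where $u$ is smooth and coincides with $A|x|^{-\gamma}$, I would exploit the global bound $u(y) \leq A|y|^{-\gamma}$ (with equality for $|y| \geq 1$) to deduce $\I_\xi u(x) \leq \I_\xi (A|\cdot|^{-\gamma})(x)$ direction by direction, and hence $\I_k^+ u(x) \leq A c_k(\gamma)|x|^{-(\gamma+2s)}$ after taking the maximum over orthonormal frames. Combined with $u(x)^p = A^p|x|^{-(\gamma+2s)}$, this gives $\I_k^+ u(x) + u(x)^p \leq 0$ provided $A^{p-1} \leq |c_k(\gamma)|$. On the region $|x| < 1$, where $u \equiv A$, I would instead use $u \leq A$ globally together with the quantitative tail estimate $u(x+\tau\xi) \leq A(|\tau|-1)^{-\gamma}$ for $|\tau| \geq 2$ to produce a uniform bound $\I_\xi u(x) \leq -DA$, with some $D = D(s,\gamma) > 0$ independent of $x \in B_1$ and $\xi$; summing over any frame yields $\I_k^+ u(x) \leq -kDA$, so the supersolution inequality holds as long as $A^{p-1} \leq kD$.

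The main technical obstacle is the corner set $|x| = 1$, where $u$ is only continuous and the viscosity formulation must be invoked. For $\varphi \in C^2(B_\delta(x_0))$ such that $u-\varphi$ attains a local minimum at $x_0 \in \partial B_1$, normalized so that $\varphi(x_0) = u(x_0) = A$, the inequality $\varphi \leq u$ in $B_\delta(x_0)$ together with the symmetric form of the integrals bounds
\[
\I_{\xi_i,\delta}\varphi(x_0) + \I^\delta_{\xi_i}u(x_0) \leq \tfrac{C_s}{2}\int_{\R} [u(x_0+\tau\xi_i) + u(x_0-\tau\xi_i) - 2u(x_0)]|\tau|^{-(1+2s)}\, d\tau,
\]
and the same tail analysis as in the interior step shows the right-hand side is $\leq -DA$ uniformly in $\xi_i$. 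Choosing $A$ small enough that $A^{p-1} \leq \min\{|c_k(\gamma)|, kD\}$ then makes $u$ a positive viscosity supersolution on all of $\R^N$.
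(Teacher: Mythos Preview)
Your argument is correct, but it follows a genuinely different route from the paper's. The paper works with the globally smooth profile
\[
u(x)=\varepsilon\,(1+|x|)^{-2sq},\qquad q\in\Big[\tfrac{1}{p-1},\tfrac{\bar\gamma}{2s}\Big),
\]
applies the representation formula of Proposition~\ref{propPk} directly, and then uses the triangle inequality $\frac{1}{1+|x|}+\big|\frac{|x|}{1+|x|}+\frac{\tau}{1+|x|}\big|\geq\big|1+\frac{\tau}{1+|x|}\big|$ (and its counterpart for the orthogonal direction) to majorize $\I_k^+u(x)$ by $\varepsilon\,c_k(2sq)(1+|x|)^{-2s(q+1)}$. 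Because $u$ is $C^2$ everywhere, no case splitting and no viscosity argument at a corner set are needed; a single scaling $\varepsilon$ finishes the proof.

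Your construction instead builds on the truncated fundamental solution $A\min\{1,|x|^{-\gamma}\}$ already used in Theorem~\ref{linearLiouville}\,(ii). The gain is conceptual transparency: you compare directly with Proposition~\ref{propbargamma} via the pointwise bound $u\leq A\,w_\gamma$, and the homogeneity $\gamma p=\gamma+2s$ falls out automatically from the choice $\gamma=2s/(p-1)$. The cost is the three-region analysis and the viscosity verification on $\partial B_1$. Your handling of the corner is fine: since $x_0$ is a global maximum of $u$, the symmetric integrand is nonpositive and the tail bound $u(x_0\pm\tau\xi)\leq A(|\tau|-1)^{-\gamma}$ for $|\tau|\geq 2$ yields the uniform negative upper bound you need. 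Either approach proves the proposition; the paper's is shorter because it sidesteps the lack of regularity, while yours makes the role of $\bar\gamma$ and of the power-type fundamental solution more explicit.
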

\begin{proof}
	For any $q\in\left[\frac{1}{p-1},\frac{\bar\gamma}{2s}\right)$, let 
	$$
	u(x)=\frac{1}{\left(1+|x|\right)^{2sq}}.
	$$
	As a consequence of Theorem \ref{propPk}, we have for any $x\in\R^N$ 
	\begin{equation*}
	\begin{split}
	&\I_k^+ u(x)=C_s\,\mathrm{P.V.} \int_{-\infty}^{+\infty}\left[{\left(1+||x|+\tau|\right)^{-2sq}}-{\left(1+|x|\right)^{-2sq}}\right]|\tau|^{-(1+2s)}\,d\tau \\
	&\quad+(k-1)C_s\int_{-\infty}^{+\infty}\left[{\left(1+\sqrt{|x|^2+\tau^2}\right)^{-2sq}}-{\left(1+|x|\right)^{-2sq}}\right]|\tau|^{-(1+2s)}\,d\tau\\
	&=\frac{1}{\left(1+|x|\right)^{2sq}}\left(C_s\,\mathrm{P.V.} \int_{-\infty}^{+\infty}\left[{\left(\frac{1}{1+|x|}+\left|\frac{|x|}{1+|x|}+\frac{\tau}{1+|x|}\right|\right)^{-2sq}}-1\right]|\tau|^{-(1+2s)}\,d\tau \right.\\
	&\left.\quad+(k-1)C_s\int_{-\infty}^{+\infty}\left[{\left(\frac{1}{1+|x|}+\sqrt{\frac{|x|^2}{(1+|x|)^2}+\frac{\tau^2}{(1+|x|)^2}}\,\right)^{-2sq}}-1\right]|\tau|^{-(1+2s)}\,d\tau\right).
	\end{split}
	\end{equation*}
	By the triangular inequality, for any $x\in\R^N$ and $\tau\in\R$,
	$$
	\frac{1}{1+|x|}+\left|\frac{|x|}{1+|x|}+\frac{\tau}{1+|x|}\right|\geq\left|1+\frac{\tau}{1+|x|}\right|
	$$
	and  a straightforward computation yields
	$$
	\left(\frac{1}{1+|x|}+\sqrt{\frac{|x|^2}{(1+|x|)^2}+\frac{\tau^2}{(1+|x|)^2}}\right)^2\geq1+\frac{\tau^2}{(1+|x|)^2}.
	$$
	Hence we obtain
	\begin{equation*}
	\begin{split}
	\I_k^+ u(x)&\leq\frac{1}{\left(1+|x|\right)^{2s(q+1)}}\left(C_s\,\mathrm{P.V.} \int_{-\infty}^{+\infty}\left[\left|1+\tau\right|^{-2sq}-1\right]|\tau|^{-(1+2s)}\,d\tau \right.\\
	&\left.\quad+(k-1)C_s\int_{-\infty}^{+\infty}\left[{\left(1+\tau^2\right)^{-sq}}-1\right]|\tau|^{-(1+2s)}\,d\tau\right)\\
	&=\frac{1}{\left(1+|x|\right)^{2s(q+1)}}\,c_k(2sq),
	\end{split}
	\end{equation*}
	where $c_k(\cdot)$ is the function defined in Proposition \ref{propbargamma}.
	Since $2sq<\bar\gamma$, then  $c_k(2sq)<0$. For $\varepsilon\in(0,(-c_k(2sq))^{1/(p-1)})$ and $v(x)=\varepsilon u(x)$ we conclude
	\begin{equation*}
	\begin{split}
	\I_k^- v(x)+v^p(x)&\leq\frac{\varepsilon}{\left(1+|x|\right)^{2s(q+1)}}\left(c_k(2sq)+\frac{\varepsilon^{p-1}}{\left(1+|x|\right)^{2s(qp-q-1)}}\right)\\
	&\leq\frac{\varepsilon}{\left(1+|x|\right)^{2s(q+1)}}\left(c_k(2sq)+\varepsilon^{p-1}\right)\leq0\,.
	\end{split}
	\end{equation*}
\end{proof}

For the necessary condition, we require some preliminary lemmas.
\begin{lema}\label{lem1}
	Let $\bar \gamma$ be as in Proposition~\ref{propbargamma}.  Given $r > 0$, we denote $\displaystyle m(r) = \min_{\overline B_r} u$.
	
	\begin{itemize}
		\item[$(i)$] If $u$ is a nonnegative viscosity supersolution of~\eqref{eqharmonic}, for any $\gamma>\bar\gamma$ there exists a positive constant $c=c(\gamma)$ such that 
		\begin{equation}\label{1804eq2}
		m(r)\geq c\,m(1)r^{-\gamma}\quad\forall r\geq1.
		\end{equation}
	\item[$(ii)$] If $u$ is  a positive supersolution of~\eqref{2004eq6} for some  $p<\frac{1+2s}{\bar\gamma}$, then 
there exists a positive constant $\bar c=\bar c(\bar\gamma,p,s,m(1))$ such that 
	\begin{equation}\label{2904eq2}
	m(r)\geq \bar c\,r^{-\bar\gamma}\quad\forall r\geq1.
	\end{equation}
	\end{itemize}
\end{lema}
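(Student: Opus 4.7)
The plan is a barrier--comparison argument for both parts. For part~(i), I restrict to $\gamma\in(\bar\gamma,1)$: the inequality for $\gamma'\geq 1$ follows from that case with a worse constant. Fix such $\gamma$; by Proposition~\ref{propbargamma}, $w(x):=|x|^{-\gamma}$ satisfies $\I_k^+ w(x)=c_k(\gamma)|x|^{-\gamma-2s}>0$ on $\R^N\setminus\{0\}$. For $R>1$, consider the bounded truncated barrier
\[
v_R(x)=\begin{cases} m(1),&|x|\leq 1,\\ c_R(|x|^{-\gamma}-R^{-\gamma}),&1\leq|x|\leq R,\\ 0,&|x|\geq R,\end{cases}
\]
with $c_R=m(1)/(1-R^{-\gamma})$ chosen so that $v_R$ is continuous, which automatically gives $v_R\leq m(1)\leq u$ on $\overline{B_1}$ and $v_R=0\leq u$ on $\{|x|\geq R\}$. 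The main task is to check that $v_R$ is a viscosity subsolution of $\I_k^+ u=0$ in $\{1<|x|<R\}$. At a point $x$ there, I pick the optimal frame $\{\hat x,\xi_2^\perp,\ldots,\xi_k^\perp\}$ given by Proposition~\ref{propPk} for $\I_k^+ w(x)$, and split each one-dimensional integral according to whether $x+\tau\xi_i$ lies in $B_1$, in the annulus, or outside $B_R$. Along each perpendicular direction the line never enters $B_1$ and the truncation at $|x|=R$ contributes only a nonnegative correction. Along $\hat x$ the ``near'' contribution from the segment inside $B_1$ is negative, but thanks to $\gamma<1$ it is bounded by a term of order $(|x|-1)^{-1-2s}$, so the main positive term $c_k(\gamma)|x|^{-\gamma-2s}$ dominates (possibly after a universal rescaling of $v_R$). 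The comparison principle from Section~\ref{sec1} then yields $v_R\leq u$ in the annulus, and letting $R\to\infty$ gives $u(x)\geq c\,m(1)|x|^{-\gamma}$ for $|x|\geq 1$.

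For part~(ii), I bootstrap part~(i) with the nonlinearity $u^p$. Since any positive supersolution of $\I_k^+ u+u^p=0$ is also a supersolution of $\I_k^+ u=0$, part~(i) gives $u(x)\geq c_\gamma m(1)|x|^{-\gamma}$ for every $\gamma>\bar\gamma$. Plugging this into $\I_k^+ u\leq -u^p$ sharpens the inequality to $\I_k^+ u(x)\leq -(c_\gamma m(1))^p|x|^{-\gamma p}$ for $|x|\geq 1$, and I repeat the comparison of part~(i) with a new barrier of the form $A|x|^{-\beta}$, where $\beta=\gamma p-2s$ is selected so that $\I_k^+(A|x|^{-\beta})=A\,c_k(\beta)|x|^{-\beta-2s}$ matches the new right-hand side. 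The hypothesis $p<(1+2s)/\bar\gamma$ keeps the resulting sequence of exponents in the admissible range and the constants positive; iterating drives $\beta$ down to the critical value $\bar\gamma$ and yields $m(r)\geq \bar c\,r^{-\bar\gamma}$ with $\bar c$ depending only on $\bar\gamma$, $p$, $s$ and $m(1)$.

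The principal obstacle is the verification of the subsolution property of $v_R$ in part~(i). The nonlocal nature forces one to track the barrier over all of $\R^N$, and the contribution to $\I_{\hat x}v_R(x)$ coming from the segment of the line $\{x+\tau\hat x\}$ that enters $B_1$ carries the opposite sign to the main term, so the cancellation near $|x|=1$ is delicate. This is precisely where the strict positivity of $c_k(\gamma)$ from Proposition~\ref{propbargamma} and the integrability of $|u|^{-\gamma}$ on $(-1,1)$ (which requires $\gamma<1$) must be used quantitatively. A secondary difficulty in part~(ii) is preventing the constants from degenerating through the bootstrap as $\beta$ approaches $\bar\gamma$, where $c_k(\beta)\to 0$.
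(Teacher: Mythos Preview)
Your barrier $v_R$ in part~(i) is \emph{not} a subsolution of $\I_k^+ u=0$ on the whole annulus $\{1<|x|<R\}$: the very estimate you write down shows why. Along $\hat x$, the correction from the segment inside $B_1$ is
\[
c_R\int_{-|x|-1}^{-|x|+1}\bigl(1-||x|+\tau|^{-\gamma}\bigr)\,|\tau|^{-(1+2s)}\,d\tau,
\]
and for $|x|=1+\delta$ the contribution near the endpoint $\tau=-\delta$ behaves (after the change $t=1-(|x|+\tau)$) like $-c_R\gamma\int_0 t(\delta+t)^{-(1+2s)}dt\sim -C\delta^{1-2s}$, which diverges as $\delta\to0^+$ since $2s>1$. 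Meanwhile the ``main positive term'' $c_R\,c_k(\gamma)|x|^{-\gamma-2s}$ stays bounded near $|x|=1$, so it cannot dominate; no rescaling of $v_R$ fixes this, since both terms carry the same factor $c_R$. The paper avoids the issue by truncating the power function near the \emph{origin} instead: it sets $w(|x|)=\varepsilon^{-\gamma}$ for $|x|\le\varepsilon$ and $w(|x|)=|x|^{-\gamma}$ otherwise, with $\varepsilon$ small. Then for every $|x|\ge1$ the only deviation of $w$ from $w_\gamma$ along $\hat x$ occurs in the window $\tau\in(-|x|-\varepsilon,-|x|+\varepsilon)$, where $|\tau|\ge 1/2$, and the correction is uniformly $O(\varepsilon^{1-\gamma})$. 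This is the missing quantitative cancellation; once you have it, the normalized function $\phi=m(1)\,(w-w(R))/(w(\varepsilon)-w(R))$ is a genuine subsolution on $\{1<|x|<R\}$ and comparison gives $c=\varepsilon^{\gamma}$.

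For part~(ii), the paper does not iterate. It takes the same $\varepsilon$-truncated barrier but with the critical exponent $\bar\gamma$ (so $c_k(\bar\gamma)=0$), obtaining $\I_k^+\phi(x)\ge -\tilde c\,\varepsilon^{1-\bar\gamma}|x|^{-(1+2s)}$; then a single application of part~(i) with the specific choice $\gamma=(1+2s)/p>\bar\gamma$ yields $\I_k^+u(x)\le -u^p(x)\le -(c\,m(1))^p|x|^{-(1+2s)}$. The two right-hand sides have the same decay, so one fixes $\varepsilon$ small (depending on $\bar\gamma,p,s,m(1)$) to get $\I_k^+u\le\I_k^+\phi$ and comparison concludes in one step. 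Your bootstrap is not wrong in spirit, but the degeneration $c_k(\beta)\to0$ you flag is exactly what the paper's single-step argument sidesteps.
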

\begin{proof}
	$(i)$ The statement \eqref{1804eq2} is trivial if $u\equiv0$. By the strong minimum principle, see Proposition \ref{propSMP}, we can then assume  $u>0$ in $\mathbb R^N$. 
	
	We claim that for $\varepsilon$ small enough (depending on $\gamma$) the function
	$$w(|x|)=
	\begin{cases}
	\varepsilon^{-\gamma} & \text{if $|x|\leq\varepsilon$}\\
	|x|^{-\gamma} & \text{if $|x|>\varepsilon$}
	\end{cases}
	$$
is a subsolution of $\I_k^+ u(x)=0$ for $|x|\geq	1$. Then \eqref{1804eq2} follows from the claim, since the 
function
	$$
	\phi(x)=m(1)\frac{w(|x|)-w(R)}{w(\varepsilon)-w(R)}
	$$
is, for any $R>1$, subsolution of $\I_k^+ u(x)=0$ for $1<|x|<R$. 
Moreover $u(x)\geq m(1)\geq\phi(x)$ for $|x|\leq1$ and $u(x)\geq0\geq\phi(x)$ if $|x|\geq R$. The comparison 
principle yields $u(x)\geq\phi(x)$ for $1<|x|<R$ and letting $R\to+\infty$ we infer that
	$$
	m(r)\geq m(1)\varepsilon^{\gamma}r^{-\gamma},
	$$
leading to \eqref{1804eq2} with $c=\varepsilon^{\gamma}$.
	
We  proceed with the proof of the claim. For $|x|\geq1$, we use that
	\begin{align*}
	\I_k^+ w(x) \geq & \ \I_{\hat{x}}w(x)+(k-1)\I_{x^\perp}w_\gamma(x) 
	\end{align*}
where $w_\gamma(x) = |x|^{-\gamma}$. Now we concentrate on $\I_{\hat x} w(x)$. For $|x| \geq 1$ we see that
\begin{align*}
\I_{\hat{x}}w(x) = \I_{\hat{x}} w_\gamma(x) - \int_{-|x|-\varepsilon}^{-|x|+\varepsilon}\frac{||x|+\tau|^{-\gamma}-\varepsilon^{-\gamma}}{|\tau|^{1+2s}}\,d\tau,
\end{align*}
from which, by Proposition~\ref{propbargamma} we conclude that
\begin{equation*}
\begin{split}
\I^+_kw(x) \geq & \  c_k(\gamma)|x|^{-(\gamma+2s)}-\int_{-|x|-\varepsilon}^{-|x|+\varepsilon}\frac{||x|+\tau|^{-\gamma}-\varepsilon^{-\gamma}}{|\tau|^{1+2s}}\,d\tau \\
= & \ |x|^{-(\gamma+2s)} \Big{(} c_k(\gamma) - \int_{-1-\frac{\varepsilon}{|x|}}^{-1+\frac{\varepsilon}{|x|}}\frac{|1+\tau|^{-\gamma}-\left(\frac{\varepsilon}{|x|}\right)^{-\gamma}}{|\tau|^{1+2s}}\,d\tau \Big{)}\,.
\end{split}
\end{equation*}

Let us denote $I$ the integral term in the right-hand side of the last inequality. Using that $|x| \geq 1$ and $\varepsilon < 1/2$ we have
\begin{equation*}
I \leq 2^{1+2s} \int_{-1-\varepsilon}^{-1+\varepsilon}|1+\tau|^{-\gamma}\,d\tau =  \frac{2^{2(1+s)}}{1-\gamma}\varepsilon^{1-\gamma} \leq \frac{16}{1-\gamma}\varepsilon^{1-\gamma}.
\end{equation*}


Using this, we conclude that
\begin{equation}\label{2904eq3}
\I^+_kw(x) \geq  \ |x|^{-(\gamma+2s)} (c_k(\gamma) - C \varepsilon^{1 - \gamma}),
\end{equation}
with $C = 16 (1 - \gamma)^{-1}$. Since $\gamma>\bar\gamma$ we have $c_k(\gamma) > 0$  and therefore it is sufficient to take $\varepsilon\leq\min\left\{\frac12,\left(\frac{c_k(\gamma)}{C}\right)^{\frac{1}{1-\gamma}}\right\}$ to conclude the proof of the claim.
	
	\medskip
	\noindent
	$(ii)$ Let us consider 
	$$w(|x|)=
	\begin{cases}
	\varepsilon^{-\bar\gamma} & \text{if $|x|\leq\varepsilon$}\\
	|x|^{-\bar\gamma} & \text{if $|x|>\varepsilon$}.
	\end{cases}
	$$
	
Similarly to \eqref{2904eq3}, using the fact that $c_k(\bar\gamma)=0$, we have for $|x|\geq1$
	$$
	\I_k^+ w(x)\geq-\int_{-|x|-\varepsilon}^{-|x|+\varepsilon}\frac{||x|+\tau|^{-\bar\gamma}-\varepsilon^{-\bar\gamma}}{|\tau|^{1+2s}}\,d\tau.
	$$
Assuming  $\varepsilon\leq\frac12$, we infer that 
	\begin{equation}\label{2904eq4}
	\begin{split}
	\I_k^+ w(x)&\geq-\frac{2^{1+2s}}{|x|^{1+2s}}\int_{-|x|-\varepsilon}^{-|x|+\varepsilon}{||x|+\tau|^{-\bar\gamma}-\varepsilon^{-\bar\gamma}}\,d\tau=-4^{1+s}\frac{\bar\gamma}{1-\bar\gamma}\varepsilon^{1-\bar\gamma}\frac{1}{|x|^{1+2s}}.
	\end{split}
	\end{equation}
For any $R\geq2^{\frac{1-\bar\gamma}{\bar\gamma}}$, the function 
	$$
	\phi(x)=m(1)\frac{w(|x|)-w(R)}{w(\varepsilon)-w(R)}
	$$
satisfies, for $|x|\geq1$, the inequality
	\begin{equation}\label{2904eq5}
	\I_k^+\phi(x)\geq-\tilde c\,\varepsilon^{1-\bar\gamma}\frac{1}{|x|^{1+2s}},
	\end{equation}
with $\tilde c=m(1)2^{3+2s-\bar\gamma}\frac{\bar\gamma}{1-\bar\gamma}$.\\
	Now we apply \eqref{1804eq2} with $\gamma=\frac{1+2s}{p}$. Note that $\gamma>\bar\gamma$ by the assumption $p<\frac{1+2s}{\bar\gamma}$. From \eqref{2004eq6} we then obtain
	\begin{equation}\label{2904eq6}
	\I_k^+u(x)\leq -u^p(x)\leq-(cm(1))^p\frac{1}{|x|^{1+2s}},
	\end{equation}
where $c$ is the constant appearing in \eqref{1804eq2}. Now, from \eqref{2904eq5}-\eqref{2904eq6}, taking $\varepsilon=\varepsilon(\bar\gamma,p,s,m(1))$ small enough, we have
	$$
	\I_k^+u(x)\leq\I_k^+\phi(x)\quad\forall |x|\geq1.
	$$ 
Since $u\geq\phi$ for $|x|\leq1$ and $|x|\geq R$, by comparison principle $u\geq \phi$ for $|x|\in[1,R]$. 
Sending $R\to+\infty$, we obtain 
	$$
	m(r)\geq m(1)\varepsilon^{\bar\gamma}r^{-\bar\gamma},
	$$
which is exactly \eqref{2904eq2} with $\bar c=m(1)\varepsilon^{\bar\gamma}$.
\end{proof}

\begin{lema}\label{lem2}
	Let $\bar \gamma$ be as in Proposition~\ref{propbargamma}. Let $u$ be a nonnegative viscosity 
supersolution of~\eqref{eqharmonic}. Then, 
for any $\gamma\geq\bar\gamma$ there exists a positive constant $c=c(\gamma)$ such that 
	\begin{equation}\label{2004eq2}
	m(R)\geq c\,m\left(\frac R2\right)\quad\forall R>0.
	\end{equation}
\end{lema}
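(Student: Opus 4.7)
The plan is to derive Lemma \ref{lem2} from Lemma \ref{lem1}(i) by a scaling argument. The crucial observation is that the operator $\I_k^+$ is $2s$-homogeneous under dilations: if $\lambda>0$ and $\tilde u(x):=u(\lambda x)$, then a direct change of variable $\sigma=\lambda\tau$ in the defining integral gives, for every $\xi\in\mathbb S^{N-1}$,
$$
\I_\xi \tilde u(x)=\lambda^{2s}\,\I_\xi u(\lambda x),
$$
and hence $\I_k^+\tilde u(x)=\lambda^{2s}(\I_k^+u)(\lambda x)$. This scaling passes to the viscosity framework by rescaling test functions, so $\tilde u$ is again a nonnegative viscosity supersolution of $\I_k^+\tilde u=0$ in $\R^N$.

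Given $R>0$, I apply this with $\lambda=R/2$ and set $\tilde u(x)=u(Rx/2)$. Writing $\tilde m(r)=\min_{B_r}\tilde u$, we clearly have $\tilde m(1)=m(R/2)$ and $\tilde m(2)=m(R)$. The case $u\equiv0$ is trivial; otherwise the strong minimum principle for $\I_k^+$ (Proposition \ref{propSMP} together with the implication \eqref{1509eq1}) forces $u>0$ on all of $\R^N$, so $m(R/2)>0$ and the inequality \eqref{2004eq2} is meaningful.

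Now, given $\gamma\geq\bar\gamma$, pick any $\gamma'>\bar\gamma$ (for instance $\gamma'=\max\{\gamma,(\bar\gamma+1)/2\}$, which is strictly inside the admissible range of Lemma \ref{lem1}(i)). Applying Lemma \ref{lem1}(i) to the supersolution $\tilde u$ with exponent $\gamma'$ and evaluating the resulting bound at $r=2$ yields
$$
m(R)=\tilde m(2)\;\geq\;c(\gamma')\,\tilde m(1)\,2^{-\gamma'}\;=\;c(\gamma')\,2^{-\gamma'}\,m(R/2),
$$
which is exactly \eqref{2004eq2} with the constant $c(\gamma):=c(\gamma')\,2^{-\gamma'}$.

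There is essentially no obstacle: the whole argument is a combination of scale invariance and the already established decay estimate of Lemma \ref{lem1}(i). The genuinely delicate work was hidden in the construction of the barrier in Lemma \ref{lem1}, where the singular contribution from the interval $|\tau|\leq\varepsilon/|x|$ had to be absorbed into the strictly positive leading term $c_k(\gamma)|x|^{-(\gamma+2s)}$; once that is available, the present lemma is just a one-line consequence after rescaling.
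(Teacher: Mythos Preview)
Your proof is correct and takes a genuinely different, and cleaner, route than the paper's.

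The paper redoes a barrier construction from scratch: it builds a truncated power function $w_R$ (cut off both near the origin at $R_0=\varepsilon R$ and at $2R$), proves that $\phi=(w_R-(2R)^{-\gamma})_+$ is a subsolution of $\I_k^+\phi\geq0$ on the annulus $B_{2R}\setminus B_{R/2}$ via explicit integral estimates (splitting $\I_{\hat x}\phi$ into pieces $I_1,I_2$ and balancing the bad term against the good tail contribution), and then applies comparison. The argument is essentially a rescaled variant of the barrier from Lemma~\ref{lem1}(i), but carried out by hand.

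You instead observe that the $2s$-homogeneity $\I_k^+\tilde u(x)=\lambda^{2s}(\I_k^+u)(\lambda x)$ for $\tilde u(x)=u(\lambda x)$ turns Lemma~\ref{lem2} into the single evaluation $r=2$ of Lemma~\ref{lem1}(i) applied to $\tilde u(x)=u(Rx/2)$. This is legitimate: the scaling carries over to viscosity supersolutions by rescaling test functions, the constant $c(\gamma')$ in Lemma~\ref{lem1}(i) is universal (it is $\varepsilon^{\gamma'}$ with $\varepsilon$ depending only on $\gamma'$, $k$, $s$, not on the supersolution), and the auxiliary choice of $\gamma'>\bar\gamma$ handles the borderline case $\gamma=\bar\gamma$ that Lemma~\ref{lem1}(i) excludes. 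One minor remark: since the inequality in~\eqref{2004eq2} does not involve $\gamma$, you could simply fix $\gamma'=(\bar\gamma+1)/2$ once and for all and obtain a single constant independent of $\gamma$; the dependence on $\gamma$ in the statement is an artifact of the paper's barrier method.

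What your approach buys is economy: no new barrier, no new integral estimates. What the paper's approach buys is a self-contained compactly supported subsolution on an annulus, which could in principle be reused elsewhere, but for the purposes of the Liouville theorem only the inequality~\eqref{2004eq2} is ever used, so your shortcut loses nothing.
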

\begin{proof}
Let $\gamma\geq\bar\gamma$, $R>0$ and $R_0= \varepsilon R$ for some $\varepsilon \in (0, 1/4)$ to be fixed. 
	Consider the function 
	$$w_R(|x|)=
	\begin{cases}
	R_0^{-\gamma} & \text{if $|x|\leq R_0$}\\
	|x|^{-\gamma} & \text{if $R_0<|x|$}.
	\end{cases}
	$$
	
	We claim that the function $\phi(x) = (w_R(|x|) - (2R)^{-\gamma})_+$ satisfies
	$$
	\I_k^+ \phi \geq 0 \quad \mbox{in} \ B_{2R} \setminus B_{R/2}.
	$$
	
	Assuming the claim is true, the function
	$$
	\tilde \phi(x) = m\left(\frac R2\right)\frac{\phi(x)}{R_0^{-\gamma} - (2R)^{-\gamma}},
	$$
	solves $\I_k^+ \tilde \phi(x)\geq 0$ for $|x|\in\left(\frac R2,2R\right)$. Since $u(x)\geq m\left(\frac R2\right)\geq \tilde \phi(x)$ for $|x|\leq\frac R2$ and $u(x)\geq 0 = \tilde \phi(x)$ if $|x|\geq2R$, by comparison principle we get $u(x)\geq \tilde \phi(x)$ for $|x|\in\left(\frac R2,2R\right)$. In particular, we have $m(R) \geq \min_{\overline B_R} \tilde \phi$ from which we obtain
	$$
	m(R)\geq m\left(\frac R2\right)\frac{R^{-\gamma}-(2R)^{-\gamma}}{R_0^{-\gamma}-(2R)^{-\gamma}}= c \  m\left(\frac R2\right) ,
	$$
	where $c = c(\varepsilon, \gamma) := \frac{1 - 2^{-\gamma}}{\varepsilon^{-\gamma} - 2^{-\gamma}}$. 
	Then \eqref{2004eq2} holds with this constant $c$.
	
	Now we prove the claim. By definition, for each $|x|\in\left(\frac R2,2R\right)$ we have
	\begin{align}\label{cuore}
	\I_k^+ \phi(x) & \geq \I_{\hat{x}}\phi(x)+(k-1)\I_{x^\perp}\phi(x).
	\end{align}
	
	As in Proposition~\ref{propbargamma} we denote $w_\gamma(x)=|x|^{-\gamma}$. Denoting $A = \{ \tau \in \R : |x|^2 + \tau^2 \leq (2R)^2\}$ we have $\phi(x + \tau x^\perp) = w_\gamma(x + \tau x^\perp) - (2R)^{-\gamma}$ for $\tau \in A$, while for $\tau \in A^c$ it holds that $\phi(x+\tau x^\perp)=0$ and $w_\gamma(x + \tau x^\perp) \leq (2R)^{-\gamma}$.
	Then we have 
	\begin{equation}\label{cuore1}
	\begin{split}
	\I_{x^\perp}\phi(x) = & \ C_s \int_{A} [w_\gamma(x + \tau x^\perp) - w_\gamma(x)]|\tau|^{-(1 + 2s)}d\tau \\
	& \ + C_s \int_{\R \setminus A} [(2R)^{-\gamma} - w_\gamma(x)]|\tau|^{-(1 + 2s)}d\tau \\
	\geq & \ \I_{x^\perp} w_\gamma(x)
	\end{split}
	\end{equation}
	
	We employ a similar argument for $\I_{\hat{x}} \phi(x)$. This time we denote the (disjoint) sets 
	$$
	A = [-R_0 - |x|, R_0 - |x|], \quad B = \{ \tau \in \R : ||x| + \tau| \geq 2R \}.
	$$ 
	
	Thus, by definition we have
	\begin{align*}
	\I_{\hat x} \phi(x) = & \I_{\hat x} w_\gamma(x)  + C_s \int_A [R_0^{-\gamma} - w_\gamma(x + \tau \hat x)] |\tau|^{-(1 + 2s)}d\tau \\
 	& + C_s \int_B [(2R)^{-\gamma} - w_\gamma(x + \tau \hat x)] |\tau|^{-(1 + 2s)} d\tau \\
 	=: & \ \I_{\hat x} w_\gamma(x) + I_1 + I_2.
	\end{align*}
	
	For $I_1$, notice that $|\tau| \geq R/4$ for each $\tau \in A$. Then, we have
	\begin{equation*}
	\begin{split}
	I_1 \geq -C_s \int_{A} ||x| + \tau|^{-\gamma} |\tau|^{-(1 + 2s)}d \tau \geq -C_s\left(\frac4R\right)^{1+2s}\int_{-R_0}^{R_0}|\tau|^{-\gamma}\,d\tau 
	\end{split}
	\end{equation*}
	from which, by the choice of $R_0$ we conclude
	\begin{equation}\label{2004eq4}
	I_1 \geq - c_1 R^{-\gamma-2s},
	\end{equation}
where $c_1 = C_s 2^{3+4s}\frac{\varepsilon^{1 - \gamma}}{1-\gamma}$. Observe that this constant tends to zero as $\varepsilon \to 0$. 
	
	As far as $I_2$ is concerned, notice that the integrand is nonnegative. Thus, if we denote $B' = \{ \tau \in \R : ||x| + \tau| \geq 3R \} \subset B$ we have
	\begin{equation*}
	I_2 \geq
	C_s (2^{-\gamma} - 3^{-\gamma}) R^{-\gamma} \int_{B'}  |\tau|^{-(1 + 2s)} d\tau  \geq 
	C_s (2^{-\gamma} - 3^{-\gamma}) R^{-\gamma} \int_{5R}^{+\infty}  \tau^{-(1 + 2s)} d\tau, 
	\end{equation*}
	from which we get
	\begin{equation}\label{2004eq5}
	I_2 \geq c_2 R^{-(\gamma + 2s)},
	\end{equation}
	with $c_2 = C_s (2^{-\gamma} - 3^{-\gamma}) \frac{5^{-2s}}{2s}$. Observe that this constant is independent of $\varepsilon$. 
	
	Putting together \eqref{2004eq4}-\eqref{2004eq5} into the expression of $\I_{\hat x} \phi(x)$ above, we conclude that for $\varepsilon$ small enough we get
	\begin{equation*}
	\I_{\hat x} \phi(x) \geq \I_{\hat x} w_\gamma(x),
	\end{equation*}
	and from here, replacing this and~\eqref{cuore1} into~\eqref{cuore}, we conclude the claim. The proof is now complete.
%
%
\end{proof}

\begin{lema}\label{2904lem1}
	There exists a positive constant $c=c(k,s)$ such that the function $$\Gamma(x)=\frac{\ln|x|}{|x|^{\bar\gamma}}\,, \quad x\neq0$$ satisfies  
	\begin{equation}\label{2904eq7}
	\I_k^+\Gamma(x)\geq-\frac{c}{|x|^{\bar\gamma+2s}}\,,\quad x\neq0.
	\end{equation}
\end{lema}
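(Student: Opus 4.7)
The strategy is to produce the lower bound by evaluating the defining maximum of $\I_k^+$ on one carefully chosen frame, and then to recognize the resulting expression as the $\gamma$-derivative at $\gamma=\bar\gamma$ of the pure-power representation formula of Proposition~\ref{propbargamma}. Since $\Gamma$ is radial, Lemma~\ref{lema0}(d) implies $\I_\xi\Gamma(x)=\I_{x^\perp}\Gamma(x)$ for every unit vector $\xi\in V_x$. Taking the orthonormal $k$-frame $\{\hat x,\xi_1,\ldots,\xi_{k-1}\}$ with $\xi_i\in V_x$, the definition of $\I_k^+$ as a maximum gives
$$
\I_k^+\Gamma(x)\;\ge\;\I_{\hat x}\Gamma(x)+(k-1)\,\I_{x^\perp}\Gamma(x),
$$
so it suffices to estimate these two linear contributions.

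Next I would write $\Gamma(x)=\ln|x|\,w_{\bar\gamma}(x)$ with $w_{\bar\gamma}(y)=|y|^{-\bar\gamma}$ and rescale $\tau=|x|t$ in the defining integrals. Splitting $\ln|x+\tau\hat x|=\ln|x|+\ln|1+t|$ and $|x+\tau\hat x|^{-\bar\gamma}=|x|^{-\bar\gamma}|1+t|^{-\bar\gamma}$, a direct computation yields
$$
\I_{\hat x}\Gamma(x)=|x|^{-\bar\gamma-2s}\bigl(\hat c(\bar\gamma)\ln|x|+A\bigr),\qquad A:=C_s\,\mathrm{P.V.}\int_{\R}\ln|1+t|\,|1+t|^{-\bar\gamma}\,|t|^{-(1+2s)}\,dt,
$$
and analogously
$$
\I_{x^\perp}\Gamma(x)=|x|^{-\bar\gamma-2s}\bigl(c^\perp(\bar\gamma)\ln|x|+B\bigr),\qquad B:=\tfrac{C_s}{2}\int_{\R}\ln(1+t^2)\,(1+t^2)^{-\bar\gamma/2}\,|t|^{-(1+2s)}\,dt.
$$
Summing, the coefficient of $\ln|x|$ is $c_k(\bar\gamma)=\hat c(\bar\gamma)+(k-1)c^\perp(\bar\gamma)$, which vanishes by the very definition of $\bar\gamma$; only a pure power of $|x|$ survives.

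Differentiating the integral representations of $\hat c(\gamma)$ and $c^\perp(\gamma)$ from Proposition~\ref{propbargamma} under the integral sign one reads $\hat c'(\bar\gamma)=-A$ and $c^{\perp\prime}(\bar\gamma)=-B$, hence $A+(k-1)B=-c_k'(\bar\gamma)$. Since $c_k$ is strictly convex on $(0,1)$ with $c_k(0)=c_k(\bar\gamma)=0$ and $c_k>0$ on $(\bar\gamma,1)$ (Proposition~\ref{propbargamma}), we have $c_k'(\bar\gamma)>0$. Combining everything,
$$
\I_k^+\Gamma(x)\;\ge\;-\,c_k'(\bar\gamma)\,|x|^{-\bar\gamma-2s},
$$
which is \eqref{2904eq7} with $c=c_k'(\bar\gamma)$. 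The only delicate point is the justification of the differentiation under the integral and of the manipulations involving the principal value, but the required integrability estimates (note $\bar\gamma<1$, so $\ln|1+t|\,|1+t|^{-\bar\gamma}$ is integrable near $t=-1$) are exactly those already used to prove the convexity of $c_k$ in Proposition~\ref{propbargamma}, so this step is routine.
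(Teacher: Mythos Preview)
Your proof is correct and follows essentially the same route as the paper's: both take the particular frame $\{\hat x,\xi_1,\ldots,\xi_{k-1}\}$ with $\xi_i\in V_x$ as a lower bound for the maximum, split $\ln|x+\tau\xi|=\ln|x|+\ln\bigl(|x+\tau\xi|/|x|\bigr)$ after rescaling, and observe that the coefficient of $\ln|x|$ is exactly $c_k(\bar\gamma)=0$. The paper stops there, since any finite value of the remaining bracket yields \eqref{2904eq7}; your additional identification $A+(k-1)B=-c_k'(\bar\gamma)$ and the argument that $c_k'(\bar\gamma)>0$ (from the strict convexity of $c_k$ together with $c_k(0^+)=c_k(\bar\gamma)=0$) are a correct bonus that pins down the optimal constant.
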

\begin{proof}
	Let $w_{\bar\gamma}(x)=|x|^{-\bar\gamma}$. For $x\neq0$ we have   
	\begin{equation*}
	\begin{split}
	\I_k^+\Gamma(x)&\geq\I_{\hat{x}}\Gamma(x)+(k-1)\I_{x^\perp}\Gamma(x)\\
	&=\ln|x|\,\I_k^+ w_{\bar\gamma}(x)+\frac{1}{|x|^{\bar\gamma+2s}}\left[C_s\mathrm{P.V.}\int_{-\infty}^{+\infty}\frac{\ln|1+\tau|}{|1+\tau|^{\bar\gamma}\,|\tau|^{1+2s}}\,d\tau\right.\\&\qquad\left.+C_s(k-1)\int_0^{+\infty}\frac{\ln(1+\tau^2)}{(1+\tau^2)^{\bar\gamma/2}\,\tau^{1+2s}}\,d\tau\right].
	\end{split}
	\end{equation*}
	
	Since, by Proposition \ref{propbargamma}, $\bar w(x)$ solves $\I_k^+ w_{\bar\gamma}(x)=0$ for $x\neq0$, then \eqref{2904eq7} follows. 
\end{proof}

\medskip

Now we are in position to provide the

\noindent
{\bf \textit{Proof of Theorem~\ref{teoLiouville+}}:}
	The existence of nontrivial supersolutions of \eqref{2004eq6} when $p>1+\frac{2s}{\bar\gamma}$ is a consequence of Proposition \ref{LiouvillePlus}.
	
	Let $p\leq1+\frac{2s}{\bar\gamma}$. We shall prove that $u\equiv0$ is the only nonnegative supersolution of \eqref{2004eq6}. By contradiction we suppose the contrary. Let $u$ be a nontrivial supersolution of \eqref{2004eq6}. By the strong minimum principle, see Remark \ref{SMP}, $u>0$  in $\mathbb R^N$.\\ 
	Let $\eta(|x|)$ be a cut-off function such that $\eta(|x|)=0$ for $|x|\geq1$ and $\eta(|x|)=1$ for $|x|\leq\frac12$. Define $\xi(x)=m\left(\frac R2\right)\eta\left(\frac{|x|}{R}\right)$. Since $\I_k^+\eta(x)\geq- C_\eta$, for some positive constant $C_\eta$, by scaling it turns out that
	\begin{equation}\label{2004eq7}
	\I_k^+\xi(x)\geq-\frac{C_\eta m\left(\frac R2\right)}{R^{2s}}.
	\end{equation} 
	Moreover $u(x)\geq\xi(x)$ for $|x|\in[0,\frac R2]\cup[R,+\infty]$ and $u(x)=\xi(x)$ for some $|x|=\frac R2$. Then there exists $x_R\in\mathbb R^N$ such that $|x_R|\in[\frac R2,R)$ and  $u(x)-\xi(x)\geq u(x_R)-\xi(x_R)$ for any $x\in\R^N$. Then $\I_k^+\xi(x_R)+u^p(x_R)\leq0$ and by \eqref{2004eq7} we infer that 	
	\begin{equation*}
	m^p(R)\leq u^p(x_R)\leq-\I_k^+\xi(x_R)\leq\frac{C_\eta m\left(\frac R2\right)}{R^{2s}}.
	\end{equation*}
	Then, using \eqref{2004eq2}, we have 
	\begin{equation}\label{2004eq8}
	m^{p-1}(R)\leq\frac{C}{R^{2s}}
	\end{equation}
	for a positive constant $C$. \\If $p\leq1$ then 
		\begin{equation}\label{2004eq8'}
	m^{p-1}(1)\leq\frac{C}{R^{2s}}
	\end{equation}
		for any $R\geq1$. Letting $R\to+\infty$ in \eqref{2004eq8'}, we infer  that $m(1)=0$. This violates the positivity of $u$. In what follows, the case $p>1$ is considered.
	
	\noindent
	\smallskip
	\textbf{1.- Case $p<1+\frac{2s}{\bar\gamma}$.}  Let $\gamma>\bar\gamma$ be such that 
	\begin{equation}\label{2004eq9}
	\frac{2s}{p-1}-\gamma>0.
	\end{equation}
	From \eqref{1804eq2} and \eqref{2004eq8} we have 
	$$
	m(1)\leq\frac{C}{R^{\frac{2s}{p-1}-\gamma}},
	$$
	for a positive constant $C$. Sending $R\to+\infty$, and using \eqref{2004eq9}, we again reach the contradiction that $m(1)=0$.
	
	\medskip 
	
	\noindent
	\textbf{2.- Case $p=1+\frac{2s}{\bar\gamma}$.} By contradiction let $u$ be a positive supersolution of \eqref{2004eq6}. From \eqref{2004eq8} we have the bound
	\begin{equation}\label{2904eq8}
	m(R)R^{\bar\gamma}\leq C,
	\end{equation}
	for some $C>0$. For $x\neq0$, let $\Gamma(|x|)=\frac{\ln|x|}{|x|^{\bar\gamma}}$. We have $\Gamma\left(e^{1/{\bar\gamma}}\right)=\max_{|x|>0}\Gamma(|x|)$ and, by Lemma \ref{2904lem1},  
	\begin{equation}\label{2904eq9}
	\I_k^+\Gamma\geq-\frac{c}{|x|^{\bar\gamma+2s}}\qquad\text{for $x\neq 0$}.
	\end{equation}
	Consider now, for $r_2>r_1>e^{1/{\bar\gamma}}$, the comparison function 
	$$
	\phi(x)=m(r_1)\frac{\Gamma(|x|)-\Gamma(r_2)}{\Gamma(e^{1/{\bar\gamma}})-\Gamma(r_2)},
	$$
	which, by construction, satisfies $\phi(x)\leq u(x)$ for $|x|\leq r_1$ and $|x|\geq r_2$. Moreover, by \eqref{2904eq9},
	$$
	\I_k^+\phi(x)\geq-\frac{cm(r_1)}{\Gamma(e^{1/{\bar\gamma}})-\Gamma(r_2)}\frac{1}{|x|^{\bar\gamma+2s}}\qquad\text{for $x\neq0$}.
	$$
	For $r_2$ sufficiently large we may further assume that $\Gamma(e^{1/{\bar\gamma}})-\Gamma(r_2)\geq\frac12\Gamma(e^{1/{\bar\gamma}})$, so that 
	\begin{equation}\label{2904eq10}
	\I_k^+\phi(x)\geq-\frac{2cm(r_1)}{\Gamma(e^{1/{\bar\gamma}})}\frac{1}{|x|^{\bar\gamma+2s}}.
	\end{equation}
	By Lemma \ref{lem1} and \eqref{2904eq2}, we also have  
	\begin{equation}\label{2904eq11}
	\I_k^+u(x)\leq -(u(x))^{1+\frac{2s}{\bar\gamma}}\leq-(m(|x|))^{1+\frac{2s}{\bar\gamma}}\leq -(\bar c)^{1+\frac{2s}{\bar\gamma}}\frac{1}{|x|^{\bar\gamma+2s}}.
	\end{equation}
	Since $m(r_1)\to0$ as $r_1\to+\infty$, in view of \eqref{2904eq8}, we can fix $r_1$ large enough and use \eqref{2904eq10}-\eqref{2904eq11} to obtain that $\I_k^+u(x)\leq\I_k^+\phi(x)$ for any $|x|\in(r_1,r_2)$. Hence, by comparison, $u(x)\geq\phi(x)$ and passing to the limit as $r_2\to+\infty$ we deduce that 
	$$
	m(r)r^{\bar\gamma}\geq\frac{m(r_1)}{\Gamma(e^{1/{\bar\gamma}})}\ln r\quad \forall r>r_1,
	$$
	which  is in contradiction to \eqref{2904eq8}. \qed


\subsection{Liouville-type result for the minimal operator $\I_k^-$ with $k<N$}
When $k < N$, we infer from Theorem \ref{propPk}-(ii) (see also Remark \ref{rmkg}) that for any smooth bounded radial function $u(x)=\tilde g(|x|^2)$ such that  $\tilde g(r)$ is convex for $r\geq0$, one has
\begin{equation}\label{eq2}
\I_k^- u(x)=k\,\I_{x^\perp}u(x),
\end{equation}
$x^\perp$ being any unit vector orthogonal to $x$.

This is the key fact to conclude the following theorem
\begin{teo}\label{LiouvilleMinus}
Let $s \in (0,1)$, $1 \leq k < N$ and consider the equation
\begin{equation}\label{eq1}
\I_k^- u(x)+u^p(x)=0\quad\text{in $\R^N$.}
\end{equation} Then
\begin{itemize}
			\item[(i)] for any $p\geq1$ there exist positive classical solutions of \eqref{eq1};
			\item[(ii)] for any $p\in(0,1)$ there exist nonnegative viscosity solutions $u\not\equiv0$ of \eqref{eq1}.
\end{itemize}
\end{teo}

\begin{proof}
(i). We first consider the case $p>1$. For $r\geq0$, let 
\begin{equation*}
\tilde g(r)=\frac{\alpha}{(1+r)^{\frac{s}{p-1}}}.
\end{equation*}
We claim that for a suitable choice of $\alpha=\alpha(k,s,p)>0$ the function $u(x)=\tilde g(|x|^2)$ is solution of \eqref{eq1}. Since 
\begin{align*}
\I_{x^\perp} u(x) = & 2C_s \alpha \int_{0}^{+\infty}\left[(1+|x|^2+\tau^2)^{-\frac{s}{p-1}}-(1+|x|^2)^{-\frac{s}{p-1}}\right]\tau^{-(1+2s)}\,d\tau \\
= & 2C_s \alpha (1+|x|^2)^{-\frac{s}{p-1}} \int_{0}^{+\infty}\left[\Big{(} 1+ \Big{(} \frac{\tau}{\sqrt{1+|x|^2}} \Big{)}^2 \Big{)}^{-\frac{s}{p-1}}-1\right]\tau^{-(1+2s)}\,d\tau \\
= & 2C_s \alpha (1+|x|^2)^{-\frac{sp}{p-1}} \int_{0}^{+\infty}\left[(1 + \tau^2 )^{-\frac{s}{p-1}}-1\right]\tau^{-(1+2s)}\,d\tau\,,
\end{align*}
we obtain from \eqref{eq2}   that 
$$
\I_k^- u(x) = k \I_{x^\perp} u(x) = -\alpha \bar c (1 + |x|^2)^{-\frac{sp}{p-1}}
$$ 
where $\bar c=2C_sk\int_{0}^{+\infty}\left[1-(1 + \tau^2 )^{-\frac{s}{p-1}}\right]\tau^{-(1+2s)}\,d\tau > 0$. Hence, we get that
\begin{equation*}
\I_k^- u(x) + u^p (x)= (1 + |x|^2)^{-\frac{sp}{p-1}} \Big{(} -\alpha \bar c + \alpha^p \Big{)},
\end{equation*}
from which, taking $\alpha = \bar c^{1/(p-1)}$ we conclude the result.
Moreover, by scaling, it turns out that for any  $a \neq 0$, the function 
$$
u(x)=\frac{\alpha}{(a^2+|x|^2)^\frac{s}{p-1}}
$$ 
is again solution to \eqref{eq2} for the same choice of $\alpha$ ad before.

In the case $p=1$ we follow a similar argument with a different radial profile. More specifically, for $\beta>0$ to be fixed, we consider the function 
$$
\tilde g(r)=e^{-\beta r}.
$$ 
As above, for $u(x)=\tilde g(|x|^2)$, we have that
$\I_k^- u(x) = k \I_{x^\perp} u(x)$ . It is easy to see that
\begin{equation*}
\I_{x^\perp} u(x) = -e^{-\beta |x|^2} F(\beta),
\end{equation*}
where
\begin{equation*}
F(\beta)= 2C_s\int_{0}^{+\infty}\left(1-e^{-\beta \tau^2}\right)\tau^{-(1+2s)}\,d\tau > 0.
\end{equation*}

Thus, we see that
\begin{equation*}
\I_k^- u(x) + u(x) = -e^{-\beta |x|^2} (k F(\beta) - 1).
\end{equation*}

By Fatou's lemma, one has
$$
+\infty=\int_0^{+\infty}\tau^{-(1+2s)}\,d\tau\leq\liminf_{\beta\to+\infty}\int_{0}^{+\infty}\left(1-e^{-\beta \tau^2}\right)\tau^{-(1+2s)}\,d\tau,
$$
from which we conclude that
\begin{equation}\label{eq3}
\lim_{\beta\to+\infty}F(\beta)=+\infty.
\end{equation}

Moreover, for $\beta\in(0,1]$, $$\left(1-e^{-\beta \tau^2}\right)\tau^{-(1+2s)}\leq\min\left\{\frac{1}{\tau^{2s-1}},\frac{1}{\tau^{2s+1}}\right\}\in L^1((0,+\infty))$$
and by Lebesgue's Theorem we infer that
\begin{equation}\label{eq4}
\lim_{\beta\to0^+}F(\beta)=0.
\end{equation} 
Since $F(\beta)$ is continuous (again  by Lebesgue's Theorem) we infer, by \eqref{eq3}-\eqref{eq4},  that there exists $\bar\beta>0$ such that $F(\bar\beta)=\frac{1}{k}$. Then $u(x)=e^{-\bar\beta|x|^2}$ is solution of \eqref{eq1} with $p=1$. We conclude observing that, by homogeneity, for any $b>0$ the function $u(x)=be^{-\bar\beta|x|^2}$ is still a positive entire solution of \eqref{eq1}.

\noindent
(ii). We shall prove that  radial function
$$
u(x)=\alpha(R^2-|x|^2)_+^{\frac{s}{1-p}}
$$
is, for a suitable choice of $\alpha>0$ and for any $R>0$, a viscosity solution of the equation
\begin{equation}\label{2903eq1}
k\I_{x^\perp} u(x)+u^p(x)=0\quad\text{in $\R^N$.}
\end{equation}
Note that the representation formula  \eqref{eq2} is missing in this case, since the function $u$  does not fit in the assumption of Theorem  \ref{propPk}-(ii). Nevertheless using the inequality $$
\I_k^- u(x)\leq k \I_{x^\perp} u(x),
$$
which holds for any admissible function $u$, just using the minimality of the operator $\I_k^-$ among the family of $k$-dimensional orthonormal subsets of $\mathbb R^N$, we can still  to conclude that $u$ is a viscosity supersolution of \eqref{eq1}.\\
To show \eqref{2903eq1} we first note that $u$ is a smooth function  for $|x|\neq R$. In particular we have
\begin{equation}\label{2903eq2}
\I_{x^\perp} u(x)=0=u^p(x)\quad\text{if $|x|>R$,}
\end{equation}
while if $|x|<R$ 
$$\I_{x^\perp} u(x)=-\alpha\bar c(R^2-|x|^2)^{\frac{ps}{1-p}}\,,$$
where $\bar c=2C_sk\int_0^{+\infty}\left(1-(1-\tau^2)_+^{\frac{s}{1-p}}\right)\tau^{-(1+2s)}\,d\tau>0$.
Hence
\begin{equation}\label{2903eq3}
\I_k^- u(x) + u^p(x) =(R^2-|x|^2)^{\frac{ps}{1-p}}\left(-\alpha\bar c+\alpha^p\right) \quad\text{if $|x|<R$.}
\end{equation}
 Choosing 
$\alpha=\bar c^{-\frac{1}{1-p}}$, we infer from \eqref{2903eq2}-\eqref{2903eq3} that  $u$  satisfies, in the classical sense, the equation \eqref{2903eq1} for any $|x|\neq R$.  \\
It remains to prove \eqref{2903eq1} whenever $|x|=R$.  It is plain that $u$ is a viscosity subsolution since there are no test functions touching $u$ from above at $x$ when $|x|=R$. In order to prove that $u$ is a supersolution of \eqref{2903eq1}, let $x_0\in\mathbb R^N$ such that $|x_0|=R$ and let $\varphi\in C^2(B_\delta(x_0))$ such that
\begin{equation}\label{2903eq4}
(u-\varphi)(x_0)=0\leq (u-\varphi)(x)\quad\forall x\in B_\delta(x_0)
\end{equation}
for some positive $\delta$. Then
\begin{equation}\label{2903eq5}
\begin{split}
\I_k^- (u,\varphi,x_0,\delta)+ u^p(x_0)&=\min_{ \xi \in \mathcal V_k} \{ \sum_{i=1}^{k} \I_{\xi_i, \delta} \varphi(x_0) + \I_{\xi_i}^\delta u(x_0)\}\\
&\leq k\I_{x^\perp,\delta}\varphi(x_0)+k\I^\delta_{x^\perp}u(x_0).
\end{split}
\end{equation}
Since $u(x_0+\tau x^\perp)=u(x_0)=0$ for any $\tau\in\mathbb R$, it is clear that $\I^\delta_{x^\perp}u(x_0)=0$. On the other hand, from \eqref{2903eq4} we infer that 
$$
\varphi(x_0+\tau x^\perp)\leq u(x_0+\tau x^\perp)=\varphi(x_0)\quad\forall \tau\in(-\delta,\delta)
$$ 
and
$$
\I_{x^\perp,\delta}\varphi(x_0)=C_s \mathrm{P.V.}\int_{-\delta}^\delta[\varphi(x_0+\tau x^\perp)-\varphi(x_0)]|\tau|^{-(1+2s)}\leq0.
$$
Hence by \eqref{2903eq5} we conclude that $$\I_k^- (u,\varphi,x_0,\delta)+ u^p(x_0)\leq0$$
as we wanted to show.
\end{proof}


\subsection{Liouville-type theorem for the minimal operator $\I_N^-$}
We start with the critical exponent associated to this operator. Let us remember that, by Theorem \ref{propPk}-(ii), the minimal operator  $\I^-_Nu$ coincides, within  a suitable class of radial function including as the main example the function $u(x)=|x|^{-{\gamma}}$,   with $N\I_{\xi^*}u$. Then a fundamental solution for the integral operator $\I_{\xi^*}$ is in turn a fundamental solution for $\I^-_N$.

\begin{lema}\label{preliminary lemma}
For  $s \in (0,1)$ and $\gamma > 0$, let 
\begin{equation}\label{eqc}
c(\gamma):=\int_0^{+\infty}\frac{\left(1+\tau^2+\frac{2}{\sqrt{N}}\tau\right)^{-\gamma/2}+\left(1+\tau^2-\frac{2}{\sqrt{N}}\tau\right)^{-\gamma/2}-2}{\tau^{1+2s}}\,d\tau\,.
\end{equation}

Then, there exists  a unique $ \tilde\gamma=\tilde\gamma(N,s) > 0$ such that $c(\gamma)<0$ for $\gamma<\tilde\gamma$, $c(\tilde\gamma)=0$ and $c(\gamma)>0$ for $\gamma>\tilde\gamma$.
\end{lema}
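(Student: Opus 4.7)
The strategy mirrors the analysis carried out for $c_k$ in Proposition \ref{propbargamma}: show that $c \in C^2((0,+\infty))$, that $c(0^+)=0$, that $c(\gamma)\to+\infty$ as $\gamma\to+\infty$, that $c$ is strictly convex, and that $c'(0^+) < 0$. These four facts together imply that $c$ crosses zero exactly once in $(0,+\infty)$ and has the claimed sign pattern. The starting observation is that, writing $a_\pm(\tau):=1+\tau^2\pm\frac{2}{\sqrt N}\tau$, one has
$$
a_\pm(\tau)=\Bigl(\tau\pm\tfrac{1}{\sqrt N}\Bigr)^{\!2}+1-\tfrac{1}{N}\geq 1-\tfrac{1}{N}>0,
$$
so the integrand in \eqref{eqc} has no singularity for $\tau>0$. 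Near $\tau=0$ a Taylor expansion gives numerator $O(\gamma\tau^2)$, integrable against $\tau^{-(1+2s)}$ since $s<1$; at infinity the numerator stays bounded (tending to $-2$), so $c(\gamma)$ is well-defined for every $\gamma>0$, and standard DCT arguments allow one to differentiate under the integral sign twice.

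The limit $c(0^+)=0$ follows from dominated convergence, using the pointwise bound $|h(\gamma,\tau)|\leq C\gamma\min(\tau^2,1)$ (valid for $\gamma\in(0,1]$) to dominate. For the second derivative one computes
$$
c''(\gamma)=\frac14\int_0^{+\infty}\frac{a_+^{-\gamma/2}\ln^2 a_++a_-^{-\gamma/2}\ln^2 a_-}{\tau^{1+2s}}\,d\tau>0,
$$
proving strict convexity. The divergence at infinity is obtained by restricting the integral to, say, $\tau\in\bigl(\tfrac{1}{2\sqrt N},\tfrac{3}{2\sqrt N}\bigr)$, where $a_-(\tau)\leq 1-\delta$ for some $\delta>0$; on this set $(a_-(\tau))^{-\gamma/2}\geq(1-\delta)^{-\gamma/2}\to+\infty$, while the remaining contributions ($(a_+)^{-\gamma/2}$ and the constant $-2$) are bounded by an integrable function of $\tau$ uniformly in $\gamma$.

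The key computation, and the only step requiring genuine care, is $c'(0^+)<0$. Differentiating \eqref{eqc} in $\gamma$ and evaluating at $\gamma=0$ gives
$$
c'(0^+)=-\frac12\int_0^{+\infty}\frac{\ln\bigl(a_+(\tau)\,a_-(\tau)\bigr)}{\tau^{1+2s}}\,d\tau.
$$
Now the crucial identity
$$
a_+(\tau)\,a_-(\tau)=(1+\tau^2)^2-\tfrac{4\tau^2}{N}=1+2\tau^2\Bigl(1-\tfrac{2}{N}\Bigr)+\tau^4
$$
shows, precisely when $N\geq 2$, that $a_+a_-\geq 1$ on $[0,+\infty)$ with strict inequality for $\tau>0$. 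Hence $\ln(a_+a_-)\geq 0$ with strict inequality a.e., forcing $c'(0^+)<0$. Combining this with $c(0^+)=0$, strict convexity, and $c(+\infty)=+\infty$, the function $c$ is strictly negative on an initial interval $(0,\tilde\gamma)$ and strictly positive on $(\tilde\gamma,+\infty)$ with a unique zero $\tilde\gamma$, which is what the lemma asserts. The step I expect to require the most attention is the justification that the derivatives under the integral sign are legitimate uniformly for $\gamma$ in a neighborhood of $0$, since the logarithmic factors $\ln a_\pm$ blow up at $\tau\to+\infty$, but this is handled by a splitting $\{\tau\leq 1\}\cup\{\tau>1\}$ together with the bound $|\ln a_\pm(\tau)|\leq C(1+\ln(1+\tau))$.
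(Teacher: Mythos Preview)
Your proposal follows the same overall strategy as the paper's proof: establish $c(0^+)=0$, $c'(0^+)<0$, strict convexity, and $c(\gamma)\to+\infty$. The first three ingredients are handled correctly, and your identity $a_+a_-=1+2(1-\tfrac{2}{N})\tau^2+\tau^4$ for the sign of $c'(0^+)$ is exactly the one the paper uses.

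There is, however, a gap in your divergence argument. You restrict attention to the interval $I=\bigl(\tfrac{1}{2\sqrt N},\tfrac{3}{2\sqrt N}\bigr)$ and correctly show that the $a_-^{-\gamma/2}$ term blows up there, but you do not control the integral over $(0,\infty)\setminus I$. The difficulty is the region $\bigl(0,\tfrac{1}{2\sqrt N}\bigr)$: the kernel $\tau^{-(1+2s)}$ is non-integrable near $0$, so the crude bound $a_+^{-\gamma/2}+a_-^{-\gamma/2}-2\geq -2$ is useless there, and your claim that ``the remaining contributions \ldots\ are bounded by an integrable function of $\tau$ uniformly in $\gamma$'' cannot hold on all of $(0,\infty)$. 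The paper closes this gap by splitting the integral into three pieces and showing that, for $\gamma$ large, the numerator $g(\tau)=a_+^{-\gamma/2}+a_-^{-\gamma/2}-2$ is convex on $\bigl[0,\tfrac{1}{4\sqrt N}\bigr]$ with $g'(0)=0$, hence $g\geq 0$ there and the near-zero contribution is nonnegative. An alternative one-line fix in your framework: Jensen's inequality for the convex map $t\mapsto t^{-\gamma/2}$ gives $g(\tau)\geq 2\bigl((1+\tau^2)^{-\gamma/2}-1\bigr)\geq -\gamma\tau^2$, so the near-zero integral is bounded below by $-C\gamma$, which is dominated by the exponential growth $(1-\delta)^{-\gamma/2}$ coming from $I$.
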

\begin{proof}
By Lebesgue's theorem we easily infer that $c(\gamma)\to0$ as $\gamma\to0^+$ and that
\begin{equation}\label{eq1july6}
c'(0)=-\frac12\int_0^{+\infty}\frac{\ln\left(1+2\left(1-\frac2N\right)\tau^2+\tau^4\right)}{\tau^{1+2s}}<0.
\end{equation}
Moreover, for any $\gamma>0$, we have
$$
c''(\gamma)=\int_{0}^{+\infty}\frac{f(\tau)+f(-\tau)}{\tau^{1+2s}}\,d\tau
$$ 
where $f(\tau)=\left(1+\tau^2+\frac{2}{\sqrt{N}}\tau\right)^{-\gamma/2}\ln^2\left(1+\tau^2+\frac{2}{\sqrt{N}}\tau\right)$. Since $f(\tau)\geq0$ for any $\tau$, then $c(\gamma)$ is  convex in $[0,+\infty)$. We claim that
\begin{equation}\label{eq2july6}
\lim_{\gamma\to+\infty}c(\gamma)=+\infty.
\end{equation}
Then, using \eqref{eq1july6}-\eqref{eq2july6}, we deduce that there exists  $\tilde\gamma=\tilde\gamma(N,s)>0$ such that $c(\tilde\gamma)=0$, $c(\gamma)<0$ for $\gamma<\tilde\gamma$ and $c(\gamma)>0$ for $\gamma>\tilde\gamma$.

To show \eqref{eq2july6} let 
$$g(\tau)=\left(1+\tau^2+\frac{2}{\sqrt{N}}\tau\right)^{-\gamma/2}+\left(1+\tau^2-\frac{2}{\sqrt{N}}\tau\right)^{-\gamma/2}-2,
$$
so that 
\begin{equation}
\begin{split}
c(\gamma)&=\int_0^{+\infty}\frac{g(\tau)}{\tau^{1+2s}}\,d\tau\\
&=\int_0^{\frac{1}{4\sqrt{N}}}\frac{g(\tau)}{\tau^{1+2s}}\,d\tau+\int_{\frac{1}{4\sqrt{N}}}^{\frac{1}{2\sqrt{N}}}\frac{g(\tau)}{\tau^{1+2s}}\,d\tau+\int_{\frac{1}{2\sqrt{N}}}^{+\infty}\frac{g(\tau)}{\tau^{1+2s}}\,d\tau\\
&=:I_1+I_2+I_3.
\end{split}
\end{equation}
We shall prove that $I_1$ and $I_3$ are bounded from below, while $I_2\to+\infty$ as $\gamma\to+\infty$. \\
Since $g(\tau)\geq-2$ for any $\tau>0$, we have
$$
I_3\geq-2\int_{\frac{1}{2\sqrt{N}}}^{+\infty}\frac{1}{\tau^{1+2s}}\,d\tau=-\frac{(2\sqrt{N})^{2s}}{s}.
$$
Moreover
\begin{equation*}
\begin{split}
g''(\tau)&=\gamma\left(1+\tau^2+\frac{2}{\sqrt{N}}\tau\right)^{-\gamma/2-2}\left((\gamma+2)(\tau+\frac{1}{\sqrt{N}})^2-1-\tau^2-\frac{2}{\sqrt{N}}\tau\right)\\
&\quad+\gamma\left(1+\tau^2-\frac{2}{\sqrt{N}}\tau\right)^{-\gamma/2-2}\left((\gamma+2)(\tau-\frac{1}{\sqrt{N}})^2-1-\tau^2+\frac{2}{\sqrt{N}}\tau\right).
\end{split}
\end{equation*}
Then, for $\gamma$ sufficiently large, $g(\tau)$ is convex in $[0,\frac{1}{4\sqrt{N}}]$. Since $g'(0)=0$ we infer that $g(\tau)\geq0$ for any $\tau\in[0,\frac{1}{4\sqrt{N}}]$. Hence $I_1\geq0$.\\
For $\tau\in[\frac{1}{4\sqrt{N}},\frac{1}{2\sqrt{N}}]$
$$
g(\tau)\geq\left(1+\tau^2-\frac{2}{\sqrt{N}}\tau\right)^{-\gamma/2}-2\geq\left(1-\frac{7}{16N}\right)^{-\gamma/2}-2
$$
and 
$$
I_2\geq\left(\left(1-\frac{7}{16N}\right)^{-\gamma/2}-2\right)\int_{\frac{1}{4\sqrt{N}}}^{\frac{1}{2\sqrt{N}}}\frac{1}{\tau^{1+2s}}\,d\tau\to+\infty\quad\text{as $\gamma\to+\infty$}.
$$
 \end{proof}
\begin{remark}\label{rmktildegamma}
If $N\geq3$ the value $\tilde\gamma$ in Lemma \ref{preliminary lemma} is in fact strictly larger than 1. This is a consequence of the fact that the function 
$$
f(\tau)=\left(1+\tau^2+\frac{2}{\sqrt{N}}\tau\right)^{-1/2}+\left(1+\tau^2-\frac{2}{\sqrt{N}}\tau\right)^{-1/2}-2
$$
is negative for any $\tau>0$, i.e. $c(1)<0$, which, together with the convexity of $c(\gamma)$, leads to $c(\gamma)<0$ for any $\gamma\in(0,1]$. 
\end{remark}

\medskip

The main result of this subsection is the following
\begin{teo}[\textbf{Liouville}]\label{Liouville-N}
	The equation 
	\begin{equation}\label{2407eq1}
	\I_N^-u(x)+u^p(x)=0\quad\text{in $\R^N$}
	\end{equation}
	has nontrivial viscosity supersolutions if, and only if, $p>1+\frac{2s}{\tilde\gamma}$.
\end{teo}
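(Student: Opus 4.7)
The approach is to mirror the proof of Theorem~\ref{teoLiouville+}, replacing $\bar\gamma$ by the critical exponent $\tilde\gamma$ of Lemma~\ref{preliminary lemma} and using the representation formula~\eqref{repI-N} from Proposition~\ref{propPk}(ii) in place of the one for $\I_k^+$. For the sufficient condition, set $\gamma := 2s/(p-1) \in (0, \tilde\gamma)$, so that $c(\gamma) < 0$. The profile $g(r) = r^{-\gamma}$ satisfies the hypotheses of Proposition~\ref{propPk}(ii), and the scaling $\tau = |x|t$ in the integral defining $\I_{\xi^*}$ gives
\begin{equation*}
\I_N^-(|x|^{-\gamma}) \;=\; N\,C_s\,c(\gamma)\,|x|^{-\gamma-2s}, \qquad x\neq 0.
\end{equation*}
Since $\gamma + 2s = \gamma p$, the function $u(x) = \alpha |x|^{-\gamma}$, extended by $u(0) = +\infty$ (which makes the viscosity supersolution condition vacuous at the origin), satisfies $\I_N^- u + u^p = (\alpha N C_s c(\gamma) + \alpha^p)|x|^{-\gamma p} \leq 0$ for $\alpha > 0$ chosen small enough that $\alpha^{p-1} \leq -N C_s c(\gamma)$.

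For the reverse direction, assume $p \leq 1 + 2s/\tilde\gamma$ and, by contradiction, let $u$ be a nontrivial nonnegative viscosity supersolution of~\eqref{2407eq1}; by Proposition~\ref{propSMP}, $u > 0$ in $\R^N$. The plan is to establish, in order: \emph{(a)} the lower bound $m(r) \geq c\,m(1)\,r^{-\gamma}$ for any fixed $\gamma > \tilde\gamma$ and all $r \geq 1$, via the smooth barrier $v(x) = \alpha(a^2 + |x|^2)^{-\gamma/2}$, whose profile preserves the monotonicity of $g'(r)/r$ and the convexity of $\tilde g''$, so that Proposition~\ref{propPk}(ii) applies and a scaling argument gives $\I_N^- v(x) = N C_s \alpha |x|^{-\gamma-2s}\, \tilde c(a/|x|, \gamma)$, where $\tilde c$ is continuous in its first variable with $\tilde c(0, \gamma) = c(\gamma) > 0$; choosing $a$ small yields $\I_N^- v \geq 0$ in $\{|x| \geq 1\}$, and comparison with $v \leq m(1)$ on $\partial B_1$ and $v \to 0$ at infinity produces the bound; \emph{(b)} the upper bound $m(R)^{p-1} \leq C R^{-2s}$, by a bump-function cutoff argument as in Theorem~\ref{teoLiouville+} (using that $\I_N^- \eta \geq -C_\eta$ for compactly supported smooth $\eta$) together with a doubling estimate $m(R) \geq c\,m(R/2)$ proved by an annular variant of the regularized-power barrier. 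For $p < 1 + 2s/\tilde\gamma$, fix $\gamma \in (\tilde\gamma, 2s/(p-1))$; combining \emph{(a)} and \emph{(b)} yields $r^{2s - \gamma(p-1)} m(1)^{p-1} \leq C$, and sending $r \to +\infty$ with $2s - \gamma(p-1) > 0$ forces $m(1) = 0$, the desired contradiction.

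For the critical case $p = 1 + 2s/\tilde\gamma$, introduce the logarithmic barrier $\Gamma(x) = \ln(a^2 + |x|^2)\,(a^2 + |x|^2)^{-\tilde\gamma/2}$, a smoothed version of $\ln|x|/|x|^{\tilde\gamma}$ whose profile still satisfies the hypotheses of Proposition~\ref{propPk}(ii) away from the origin. A computation analogous to Lemma~\ref{2904lem1}, exploiting $c(\tilde\gamma) = 0$, gives $\I_N^- \Gamma(x) \geq -C|x|^{-\tilde\gamma-2s}$ in the exterior of some ball. Comparing $u$ with a suitable affine transformation of $\Gamma$, together with the lower bound $m(r) \geq \bar c\,r^{-\tilde\gamma}$ deduced from \emph{(a)} at $\gamma$ slightly above $\tilde\gamma$ and refined via the semilinear equation (as in Lemma~\ref{lem1}(ii)), produces $m(r)\,r^{\tilde\gamma} \geq c \ln r$ for large $r$; this contradicts the uniform bound $m(r)\,r^{\tilde\gamma} \leq C$ implied by \emph{(b)} since $(p-1)\tilde\gamma = 2s$ at criticality.

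The main obstacle is the barrier design. The sharp truncation $\min\{\varepsilon^{-\gamma}, |x|^{-\gamma}\}$ used in Lemma~\ref{lem1} for $\I_k^+$ cannot be re-used, since it destroys both the monotonicity of $g'(r)/r$ and the convexity of $\tilde g''$ required by Proposition~\ref{propPk}(ii); moreover, because $\tilde\gamma > 1$ for $N \geq 3$ (Remark~\ref{rmktildegamma}), the correction integrals along directions nearly parallel to $\pm\hat x$ actually diverge, so they cannot be controlled even in $L^1$. The smooth regularization $(a^2 + |x|^2)^{-\gamma/2}$ bypasses both issues but transfers the work onto a continuity-in-parameter analysis of the coefficient $\tilde c(b, \gamma)$, which is particularly delicate at $\gamma = \tilde\gamma$, where $c(\tilde\gamma) = 0$ and the sign of $\I_N^- v$ must be read off the next-order expansion of $\tilde c(b, \tilde\gamma)$ as $b \to 0^+$.
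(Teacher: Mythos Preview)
Your strategy parallels the paper's, but the barrier construction differs in a way that leaves a real gap.

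The paper exploits a geometric fact you do not use: for $\xi^*$ with $\langle\hat x,\xi^*\rangle=1/\sqrt N$, every point on the line $x+\tau\xi^*$ satisfies $|x+\tau\xi^*|\geq|x|\sqrt{1-1/N}$. Consequently, if one builds a $C^2$ radial profile $w$ that \emph{coincides exactly} with $|x|^{-\gamma}$ (respectively $\ln|x|/|x|^{\tilde\gamma}$) for $|x|\geq r/\sqrt2$ and is extended inward only so as to preserve the global convexity of $\tilde w''$, then for $|x|\geq r$ the integral $\I_{\xi^*}w(x)$ sees only the pure-power part and equals $c(\gamma)|x|^{-\gamma-2s}$ exactly. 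The paper's extension is explicit: $\tilde f''$ is taken to be the tangent line to $(t^{-\gamma/2})''$ (respectively to $\tilde\Gamma''$) at the matching radius, which forces $\tilde w''$ to be globally convex. In particular this yields $\I_N^-w(x)=0$ on the nose when $\gamma=\tilde\gamma$, so the sharp decay $m(r)\geq c\,m(1)r^{-\tilde\gamma}$ follows by comparison alone, with no semilinear bootstrap.

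Your smoothed barriers $(a^2+|x|^2)^{-\gamma/2}$ never coincide with the pure power, so you only obtain $\I_N^-v(x)=NC_s|x|^{-\gamma-2s}\tilde c(a/|x|,\gamma)$ with $\tilde c(0,\gamma)=c(\gamma)$. This is legitimate for $\gamma>\tilde\gamma$, but at the critical exponent and for the logarithmic step it breaks. There is first a minor but genuine error: for the sufficiency part, the profile $g(r)=r^{-\gamma}$ fails $g\in L^1_{2s}(0,+\infty)$ whenever $\gamma\geq1$, which by Remark~\ref{rmktildegamma} is the generic situation for $N\geq3$, so Proposition~\ref{propPk}(ii) does not apply; the paper circumvents this with $(1+|x|)^{-2sq}$. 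More seriously, your logarithmic barrier $\Gamma(x)=\ln(a^2+|x|^2)\,(a^2+|x|^2)^{-\tilde\gamma/2}$ has $\tilde g(t)=h(a^2+t)$ with $h(s)=s^{-\tilde\gamma/2}\ln s$, and a direct computation (carried out in the paper) shows $h''$ is convex only for $s\geq r_0:=\exp\bigl(\tfrac{2}{\tilde\gamma}+\tfrac{2}{\tilde\gamma+2}+\tfrac{2}{\tilde\gamma+4}+\tfrac{2}{\tilde\gamma+6}\bigr)>1$. Hence unless $a^2\geq r_0$ the hypothesis of Proposition~\ref{propPk}(ii) fails for your $\Gamma$; and if you do take $a^2\geq r_0$, the parameter $a$ is no longer small and the perturbative reading of $\tilde c(a/|x|,\cdot)$ near $c(\cdot)$ you rely on is lost. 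The paper's tangent-line extension of $\tilde\Gamma''$ resolves both constraints at once and is the piece your argument is missing.
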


As before, we divide the proof of the previous theorem in several partial results. We start with the
\begin{prop}\label{propexi}
For any $p>1+\frac{2s}{\tilde\gamma}$ there exist positive viscosity supersolutions of the equation
$$
\I^-_Nu(x)+u^p(x)=0\quad\text{in $\R^N$.}
$$
\end{prop}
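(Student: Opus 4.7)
The plan is to mirror the construction of Proposition \ref{LiouvillePlus}, but using the representation for $\I_N^-$ in Proposition \ref{propPk}-(ii) and the threshold $\tilde\gamma$ from Lemma \ref{preliminary lemma} in place of $\bar\gamma$. Since $p > 1 + 2s/\tilde\gamma$, the interval $q \in \bigl[\tfrac{1}{p-1},\tfrac{\tilde\gamma}{2s}\bigr)$ is nonempty, so I would fix such a $q$ and consider the radial candidate
$$u(x) = \varepsilon\,(1+|x|^2)^{-sq}, \qquad \varepsilon>0 \ \text{to be chosen small}.$$
The associated $\tilde g(t) = (1+t)^{-sq}$ is $C^\infty$, $\tilde g'$ has polynomial decay so lies in $L^1_{s-1/2}(\rho,+\infty)$, and $\tilde g''(t) = sq(sq+1)(1+t)^{-sq-2}$ is convex; hence Lemma \ref{lemaP2}'s hypotheses are met and Proposition \ref{propPk}-(ii) gives the equality $\I_N^- u(x) = N\,\I_{\xi^*} u(x)$ for every $x\neq 0$.

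To evaluate $\I_{\xi^*} u$, I would use $|x+\tau\xi^*|^2 = |x|^2 + \tfrac{2\tau|x|}{\sqrt N} + \tau^2$, factor out $(1+|x|^2)^{-sq}$, and rescale by $\tau = \sqrt{1+|x|^2}\,\sigma$. Setting $t = |x|/\sqrt{1+|x|^2}\in[0,1)$, this yields
$$\I_{\xi^*}u(x) = \varepsilon\,(1+|x|^2)^{-s(q+1)}\,\Phi(t),$$
with
$$\Phi(t) = C_s\int_0^{+\infty}\frac{\left(1+\tfrac{2t\sigma}{\sqrt N}+\sigma^2\right)^{-sq}+\left(1-\tfrac{2t\sigma}{\sqrt N}+\sigma^2\right)^{-sq}-2}{\sigma^{1+2s}}\,d\sigma.$$
The key observation is that for each fixed $\sigma>0$, the map $y \mapsto (1+\sigma^2+y)^{-sq}+(1+\sigma^2-y)^{-sq}$ is even and convex, hence nondecreasing in $|y|$; therefore $\Phi$ is nondecreasing on $[0,1]$, so $\Phi(t)\leq \Phi(1) = C_s\,c(2sq)$, and since $2sq<\tilde\gamma$, Lemma \ref{preliminary lemma} yields $c(2sq)<0$. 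Thus
$$\I_N^- u(x) \leq N\varepsilon C_s\,c(2sq)\,(1+|x|^2)^{-s(q+1)} < 0.$$

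To conclude, since $q\geq 1/(p-1)$ implies $pq \geq q+1$, we have $u^p(x) = \varepsilon^p(1+|x|^2)^{-spq} \leq \varepsilon^p (1+|x|^2)^{-s(q+1)}$; consequently, choosing $\varepsilon^{p-1} \leq -N C_s\,c(2sq)$ ensures $\I_N^- u(x) + u^p(x) \leq 0$ for every $x\in\R^N$. As $u$ is smooth and the hypotheses of Lemma \ref{lemaP2} hold, $u$ is a classical, hence viscosity, supersolution.

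The principal obstacle I anticipate is the monotonicity of $\Phi(t)$ in $t$, which must be established so that the borderline value $\Phi(1) = C_s\,c(2sq)$ actually controls $\I_{\xi^*} u(x)$ for all $|x|>0$ and not just in the asymptotic regime $|x|\to\infty$; fortunately this reduces to the elementary convexity/evenness observation above. A secondary, routine check is the verification that the profile $(1+r^2)^{-sq}$ really satisfies the hypotheses of Lemma \ref{lemaP2}, so that Proposition \ref{propPk}-(ii) can be invoked to pass from the one-directional operator $\I_{\xi^*}$ to the full $\I_N^-$.
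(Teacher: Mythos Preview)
Your argument is correct and follows the same overall strategy as the paper---build a decreasing radial supersolution whose evaluation under $\I_N^-$ is controlled by $c(2sq)<0$ from Lemma~\ref{preliminary lemma}---but the execution differs in two ways worth noting. First, you use the profile $(1+|x|^2)^{-sq}$ rather than the paper's $(1+|x|)^{-2sq}$; your choice makes $\tilde g(t)=(1+t)^{-sq}$ a pure power, so the convexity of $\tilde g''$ required by Lemma~\ref{lemaP2} is immediate, whereas the paper relies on Remarks~\ref{rmkg}--\ref{rmkg2}. Second, and more substantively, after the rescaling $\tau=\sqrt{1+|x|^2}\,\sigma$ you obtain an \emph{exact} expression $\I_{\xi^*}u(x)=\varepsilon(1+|x|^2)^{-s(q+1)}\Phi(t)$ and then bound $\Phi(t)\leq\Phi(1)=C_s\,c(2sq)$ by the elementary observation that $y\mapsto (a+y)^{-sq}+(a-y)^{-sq}$ is even and convex on $(-a,a)$, hence nondecreasing in $|y|$; the paper instead reaches the same bound via the triangle inequality $\frac{1+|x\pm\tau\xi|}{1+|x|}\geq\left|\hat x\pm\frac{\tau}{1+|x|}\xi\right|$. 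Your monotonicity argument is self-contained and arguably more transparent, since it avoids introducing an inequality before the rescaling and explains directly why the worst case is the asymptotic regime $|x|\to\infty$ (i.e.\ $t\to1$). The minor check that $1+\sigma^2-\tfrac{2t\sigma}{\sqrt N}>0$ for all $t\in[0,1]$ (so the integrand is well-defined) follows from $(\sigma-\tfrac{1}{\sqrt N})^2+1-\tfrac{1}{N}>0$ for $N\geq2$.
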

\begin{proof}
For $q\in\left[\frac{1}{p-1},\frac{\tilde\gamma}{2s}\right)$ we consider the function
$$
u(x)=\frac{1}{(1+|x|)^{2sq}}\,.
$$
Using Theorem \ref{propPk}, see also Remark \ref{rmkg},  for any fixed $x\in\mathbb R^N$, $x\neq0$, it holds
$$
\I^-_Nu(x)=N\I_\xi u(x),
$$
$\xi\in\mathbb R^N$ being a unit vector such that $\left\langle \hat x,\xi\right\rangle=\frac{1}{\sqrt{N}}$. Thus we have
\begin{equation}\label{eq3july6}
\I^-_Nu(x)=\frac{NC_s}{(1+|x|)^{2sq}}\int_0^{+\infty}\frac{\left(\frac{1+|x+\tau\xi|}{1+|x|}\right)^{-2sq}+\left(\frac{1+|x-\tau\xi|}{1+|x|}\right)^{-2sq}-2}{\tau^{1+2s}}\,d\tau\,.
\end{equation}
By the triangular inequality we have 
$$
\frac{1+|x\pm\tau\xi|}{1+|x|}\geq\left|\hat x\pm\frac{\tau}{1+|x|}\xi\right|\qquad\forall\tau\geq0.
$$
Then, by \eqref{eq3july6}, we infer that 
\begin{equation*}
\begin{split}
\I^-_Nu(x)&\leq\frac{NC_s}{(1+|x|)^{2sq}}\int_0^{+\infty}\frac{\left|\hat x+\frac{\tau}{1+|x|}\xi\right|^{-2sq}+\left|\hat x-\frac{\tau}{1+|x|}\xi\right|^{-2sq}-2}{\tau^{1+2s}}\,d\tau\\
&=\frac{NC_s}{(1+|x|)^{2s(q+1)}}\int_0^{+\infty}\frac{\left|\hat x+{\tau}\xi\right|^{-2sq}+\left|\hat x-{\tau}\xi\right|^{-2sq}-2}{\tau^{1+2s}}\,d\tau\\
&=\frac{NC_s}{(1+|x|)^{2s(q+1)}}c(2sq)
\end{split}
\end{equation*}
where $c(\cdot)$ is the function defined by \eqref{eqc}. Using Lemma \ref{preliminary lemma} and the assumption $2sq<\tilde\gamma$, we see that  $c(2sq)<0$. Let $v(x)=\varepsilon u(x)$  for $\varepsilon\in(0,(NC_s|c(2sq)|)^{1/(p-1)})$. Using  $q\geq\frac{1}{p-1}$ we finally obtain
\begin{equation*}
\begin{split}
\I^-_Nv(x)+v^p(x)&\leq\frac{\varepsilon}{(1+|x|)^{2s(q+1)}}\left(NC_sc(2sq)+\frac{\varepsilon^{p-1}}{(1+|x|)^{2s(qp-q-1)}}\right)\\
&\leq\frac{\varepsilon}{(1+|x|)^{2s(q+1)}}\left(NC_sc(2sq)+\varepsilon^{p-1}\right)\leq0\,,
\end{split}
\end{equation*}
completing the proof.
\end{proof}

\begin{lema}\label{lemweak}
Let $u$ be a nonnegative viscosity supersolution of
\begin{equation*}
\I_N^-u(x)=0\quad\text{in $\R^N$}.
\end{equation*}
Then the following statements hold:
\begin{itemize}
	\item there exists a positive constant $a=a(\tilde\gamma)$ such that 
\begin{equation}\label{weakdecay}
m(r)\geq a\,m(1)\,r^{-\tilde\gamma}\quad\;\forall r\geq1;
\end{equation}
\item for any $\gamma\ge\tilde\gamma$ there exists a positive constant $b=b(\gamma)$ such that 
\begin{equation}\label{weakdecay2}
m(R)\geq b\,m\left(\frac{R}{2}\right)\quad\;\forall R>0.
\end{equation}
\end{itemize} 
\end{lema}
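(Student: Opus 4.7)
The plan is to mirror, for each of the two assertions, the corresponding arguments of Lemma~\ref{lem1}(i) and Lemma~\ref{lem2} adapted to the minimal operator $\I_N^-$. The only new input required is the representation formula of Proposition~\ref{propPk}(ii), which together with Remark~\ref{rmkg} and the defining identity $c(\tilde\gamma)=0$ from Lemma~\ref{preliminary lemma} gives the pointwise identity
\[
\I_N^- w_{\tilde\gamma}(x)=N\,C_s\,c(\tilde\gamma)\,|x|^{-\tilde\gamma-2s}=0\quad\mbox{for every }x\neq 0,
\]
where $w_{\tilde\gamma}(x):=|x|^{-\tilde\gamma}$ plays the role of the ``fundamental solution'' of $\I_N^-$ at the critical exponent.

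For assertion \eqref{weakdecay}, after reducing to $u>0$ on $\R^N$ via the strong minimum principle (Proposition~\ref{propSMP}), I define the truncated radial profile
\[
w(r)=
\begin{cases}
\varepsilon^{-\tilde\gamma} & \text{if } r\leq\varepsilon,\\
r^{-\tilde\gamma} & \text{if } r>\varepsilon,
\end{cases}
\]
with $\varepsilon\in(0,1/2)$ to be chosen small. The central claim is that $\I_N^- w(x)\geq 0$ for every $|x|\geq 1$ once $\varepsilon$ is small enough; assuming this, the barrier $\phi(x)=m(1)(w(|x|)-w(R))/(w(\varepsilon)-w(R))$ is a subsolution of $\I_N^- u=0$ on $\{1<|x|<R\}$ satisfying $\phi\leq u$ in the complement, and comparison plus $R\to +\infty$ yields $m(r)\geq m(1)\,\varepsilon^{\tilde\gamma}\,r^{-\tilde\gamma}$, which is \eqref{weakdecay} with $a=\varepsilon^{\tilde\gamma}$. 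To verify the claim, for any orthonormal frame $\{\xi_i\}\in\mathcal V_N$ and any $|x|\geq 1$ I would decompose
\[
\sum_{i=1}^N \I_{\xi_i}w(x)=\sum_{i=1}^N \I_{\xi_i}w_{\tilde\gamma}(x)+\sum_{i=1}^N\bigl[\I_{\xi_i}w(x)-\I_{\xi_i}w_{\tilde\gamma}(x)\bigr],
\]
noting that the first sum is $\geq\I_N^- w_{\tilde\gamma}(x)=0$ while each term of the second vanishes unless the line $\tau\mapsto x+\tau\xi_i$ enters $B_\varepsilon$; for $|x|\geq 1$ at most one such ``dangerous'' direction exists per frame, and its contribution is controlled along the lines of the proof of Lemma~\ref{lem1}(i). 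The principal obstacle is that, unlike in Lemma~\ref{lem1}(i) where the leading term is strictly positive, here it only vanishes, so one must argue that when a dangerous direction is present the excess of $\sum_i\I_{\xi_i}w_{\tilde\gamma}(x)$ over its minimum value $0$---quantified through the strict monotonicity in Lemma~\ref{lemaP2} and the fact that a frame with one element near $\hat x$ is uniformly far from the minimizing $\xi^*$-type frame of Proposition~\ref{propPk}(ii)---dominates the correction for $\varepsilon$ small.

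For assertion \eqref{weakdecay2} I would follow the construction of Lemma~\ref{lem2}. Setting $R_0=\varepsilon R$ with $\varepsilon\in(0,1/4)$ to be fixed, I consider $w_R(r)=\min\{R_0^{-\gamma},r^{-\gamma}\}$ and the compactly-supported barrier $\phi(x)=(w_R(|x|)-(2R)^{-\gamma})_+$; the claim is that $\I_N^-\phi\geq 0$ on $\{R/2<|x|<2R\}$. As in Lemma~\ref{lem2}, for every frame the inner cutoff at $R_0$ produces a negative error of order $-c_1\,\varepsilon^{1-\gamma}\,R^{-(\gamma+2s)}$ and the outer cutoff at $2R$ produces a positive gain of order $+c_2\,R^{-(\gamma+2s)}$ with $c_2>0$ independent of $\varepsilon$; combined with the lower bound $\sum_i\I_{\xi_i}w_\gamma(x)\geq\I_N^- w_\gamma(x)\geq 0$ (guaranteed by $\gamma\geq\tilde\gamma$ via Lemma~\ref{preliminary lemma}), choosing $\varepsilon$ small enough yields the claim. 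Comparison with $\tilde\phi(x)=m(R/2)\,\phi(x)/(R_0^{-\gamma}-(2R)^{-\gamma})$ on the annulus then produces \eqref{weakdecay2} with the explicit constant $b=(1-2^{-\gamma})/(\varepsilon^{-\gamma}-2^{-\gamma})$.
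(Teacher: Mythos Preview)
Your plan transplants the $\I_k^+$ barriers of Lemmas~\ref{lem1}--\ref{lem2} to $\I_N^-$, but this fails because for $N\geq 3$ the exponent satisfies $\tilde\gamma>1$ (Remark~\ref{rmktildegamma}), and the entire machinery you import tacitly needs $\gamma<1$. Concretely: the profile $g(r)=r^{-\tilde\gamma}$ is \emph{not} in $L^1_{2s}(0,+\infty)$ when $\tilde\gamma>1$, so it lies outside the hypotheses of Lemma~\ref{lemaP2}/Proposition~\ref{propPk} and outside the list of Remark~\ref{rmkg}; the identity $\I_N^- w_{\tilde\gamma}=0$ you open with is therefore not provided by those results. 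Worse, along the direction $\hat x$ the integral $\I_{\hat x} w_{\tilde\gamma}(x)=+\infty$, so your decomposition $\sum_i\I_{\xi_i}w=\sum_i\I_{\xi_i}w_{\tilde\gamma}+\text{correction}$ becomes $+\infty-\infty$ precisely on the frames you call ``dangerous''. The same obstruction kills your part~\eqref{weakdecay2}: the inner cut-off error you quote from Lemma~\ref{lem2} is controlled there through $\int_{-R_0}^{R_0}|t|^{-\gamma}\,dt$, which \emph{diverges} for $\gamma\geq\tilde\gamma>1$, so ``choosing $\varepsilon$ small enough'' does not make $c_1\varepsilon^{1-\gamma}$ small---it makes it blow up.

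The paper bypasses these difficulties by a different mechanism. Instead of truncating $|x|^{-\gamma}$ (which destroys the convexity of $\tilde w''$ and leaves the representation formula unavailable), it replaces the singular part near the origin by an explicit polynomial chosen so that the resulting profile $w$ is $C^2$, radially decreasing, and has $\tilde w''$ convex; thus Proposition~\ref{propPk}(ii) applies to $w$ itself and yields $\I_N^- w(x)=N\I_{\xi^*}w(x)$. The second key point is geometric: since $\langle\hat x,\xi^*\rangle=1/\sqrt N$, one has $|x\pm\tau\xi^*|\geq |x|\sqrt{1-1/N}$, so for $|x|\geq 1$ (respectively $|x|\geq R/2$) the optimal line never enters the regularization ball, and along it $w$ coincides with the pure power; hence $\I_{\xi^*}w(x)=C_s\,c(\gamma)\,|x|^{-(\gamma+2s)}$ directly. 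This yields $\I_N^- w(x)=0$ (respectively $\geq 0$) without any frame-by-frame correction analysis. To repair your argument you would need exactly these two ingredients: a regularization compatible with the hypotheses of Proposition~\ref{propPk}, and the observation that the minimizing direction $\xi^*$ keeps the line away from the modified region.
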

\begin{proof}
Let $\tilde g(|x|)$ be the radial function
\begin{equation}\label{2307eq1}
\tilde g(|x|)=
\begin{cases}
\tilde f(|x|) & \text{if $|x|\leq\frac12$}\\
|x|^{-\frac{\tilde\gamma}{2}} & \text{if $|x|>\frac{1}{2}$},
\end{cases}
\end{equation}
where  $\tilde f$ is defined, for $r\geq0$,  by the formula
\begin{equation*}
f(r)=2^{\frac{\tilde\gamma}{2}}\left[-\frac16\tilde\gamma(\tilde\gamma+2)(\tilde\gamma+4)\left(r-\frac12\right)^3+\frac12\tilde\gamma(\tilde\gamma+2)r^2-\frac12\tilde\gamma(\tilde\gamma+4)r+1+\frac18\tilde\gamma(\tilde\gamma+6)\right].
\end{equation*}
With choice of such $\tilde f$, the function $\tilde g''$ is convex in $[0,+\infty)$, since the graph of $\tilde f''(r)$ is in fact the tangent line of the function $\left(r^{-\frac{\tilde\gamma}{2}}\right)^{''}=\frac{\tilde\gamma}{2}(\frac{\tilde\gamma}{2}+1)r^{-\frac{\tilde\gamma}{2}-2}$ at $r=\frac12$.

Set $w(x)=\tilde g(|x|^2)$.    By  Theorem \ref{propPk}-(ii),  for any $x\in\mathbb R^N$ we have  $$\I_N^-w(x)=N\I_\xi w(x),$$ where    $\xi\in\mathbb R^N$ is an unit vector such that $\left\langle \hat{x},\xi\right\rangle=\frac{1}{\sqrt{N}}$.
 Hence
\begin{equation}\label{2207eq1}
\I_N^-w(x)=NC_s\int_{0}^{+\infty}\frac{w(|x+\tau\xi|)+w(|x-\tau\xi|)-2w(x)}{\tau^{1+2s}}\,d\tau.
\end{equation}
If $|x|\geq1$ and $\tau>0$ it holds that 
$$
|x\pm\tau\xi|\geq\sqrt{|x|^2+\tau^2-2\frac{\tau|x|}{\sqrt{N}}}\geq|x|\sqrt{1-\frac1N}\geq\frac{1}{\sqrt{2}}.
$$
Then, using \eqref{2307eq1}-\eqref{2207eq1} and the definition of $\tilde\gamma$ given in Lemma \ref{preliminary lemma}, we infer that
$$
\I_N^-w(x)=NC_s\int_{0}^{+\infty}\frac{|x+\tau\xi|^{-\tilde\gamma}+|x-\tau\xi|^{-\tilde\gamma}-2|x|^{-\tilde\gamma}}{\tau^{1+2s}}\,d\tau=0.
$$
In this way the function
$$
\phi(x)=m(1)\frac{w(|x|)-w(R)}{w(0)-w(R)}
$$
is for any $R>1$  a solution of $\I^-_N(w,x)=0$ for $|x|\in[1,R]$.  Moreover
 $$u(x)\geq m(1)\geq\phi(x)\qquad\forall|x|\leq1$$ and $$u(x)\geq0\geq\phi(x)\qquad\forall|x|\geq R.$$ Then by comparison principle we infer that $u(x)\geq\phi(x)$ for any $|x|\in\left[1,R\right]$. Letting $R\to+\infty$  we obtain 
$$
u(x)\geq m(1)\frac{w(|x|)}{w(0)} \quad\forall|x|\geq1,
$$
which easily imply \eqref{weakdecay} with $a=\left(2^{\frac{\tilde\gamma}{2}}\left[\frac{1}{48}\tilde\gamma(\tilde\gamma+2)(\tilde\gamma+4)+1+\frac18\tilde\gamma(\tilde\gamma+6)\right]\right)^{-1}$.

The proof of \eqref{weakdecay2} follows the same idea used before. Fix $\gamma\geq\tilde\gamma$. For $R>0$, consider the function 
\begin{equation*}
\tilde g(|x|)=\begin{cases}
\tilde f(|x|) & \text{if $|x|\leq\left(\frac{R}{2\sqrt{2}}\right)^2$}\\
|x|^{-\frac{\gamma}{2}} & \text{if $|x|>\left(\frac{R}{2\sqrt{2}}\right)^2$},
\end{cases}
\end{equation*}
where 
\begin{equation*}
\begin{split}
\tilde f(r)=\left(\frac{R}{2\sqrt{2}}\right)^{-\gamma}&\left[-\frac{32}{3}\frac{\gamma(\gamma+2)(\gamma+4)}{R^6}\left(r-\frac{R^2}{8}\right)^3+\frac{8}{R^4}\gamma(\gamma+2)r^2\right.\\&\quad-\frac{2}{R^2}\gamma(\gamma+4)r+1+\frac{\gamma}{8}(\gamma+6)\Bigg].
\end{split}
\end{equation*}
Set $w(x)=\tilde g (|x|^2)$. Since $\tilde g''$ is convex, we are in position to use the representation formula \eqref{2207eq1}. Taking into account that for $|x|\geq\frac R2$  
$$
|x\pm\tau\xi|\geq\sqrt{|x|^2+\tau^2-2\frac{\tau|x|}{\sqrt{N}}}\geq|x|\sqrt{1-\frac1N}\geq\frac{R}{2\sqrt{2}},
$$
then
$$
\I_N^-w(x)=NC_s\int_{0}^{+\infty}\frac{|x+\tau\xi|^{-\gamma}+|x-\tau\xi|^{-\gamma}-|x|^{-\gamma}}{\tau^{1+2s}}\,d\tau\geq0
$$
the last inequality being a consequence of the fact that  $\gamma\geq\tilde\gamma$. Consider now the function
$$
\phi(x)=m\left(\frac R2\right)\frac{w(|x|)-w(2R)}{w(0)-w(2R)},
$$
which is in turn a solution of $\I^-_N\phi(x)\geq0$ for $|x|\in[\frac{R}{2},2R]$ and satisfies
$$u(x)\geq m(1)\geq\phi(x)\qquad\forall|x|\leq\frac R2$$ and $$u(x)\geq0\geq\phi(x)\qquad\forall|x|\geq 2R.$$ By comparison principle we conclude 
\begin{equation*}
m(R)\geq m\left(\frac R2\right)\frac{w(R)-w(2R)}{w(0)-w(2R)}=m\left(\frac R2\right)\frac{1-2^{-\gamma}}{(2\sqrt{2})^{\gamma}\left[1+\frac{\gamma}{48}\left((\gamma+2)(\gamma+4)+6(\gamma+6)\right)\right]}\,.
\end{equation*}
\end{proof}

\medskip

\begin{proof}[\textbf{Proof of Theorem \ref{Liouville-N}}]
We shall detail the proof in the critical case $p=1+\frac{2s}{\tilde\gamma}$, since if $p>1+\frac{2s}{\tilde\gamma}$  the conclusion follows by Proposition \ref{propexi}, while the subcritical case $p<1+\frac{2s}{\tilde\gamma}$ can be treat in the same way as we did in the proof of Theorem \ref{teoLiouville+}, using now Lemma \ref{lemweak}. When $p=1+\frac{2s}{\tilde\gamma}$ we need some extra work. In particular we are not in position to use the analogous of Lemma \ref{2904lem1} for the operator $\I_N^-$, due to the lack of validity of the representation formula for $\Gamma(|x|)=\frac{\ln|x|}{|x|^{\tilde\gamma}}$. Note that $\Gamma$ doesn't even belong to $L^1_{loc}(\mathbb R^N)$ when $N\geq3$, since $\tilde\gamma>1$ (see Remark \ref{rmktildegamma}). Moreover moreover $\tilde\Gamma$ is  concave  near the origin. On the other hand, for $x$ far away the origin, we shall still obtain some useful informations that are sufficient to conclude.

Let $\tilde\Gamma(|x|)=\frac12\frac{\ln|x|}{|x|^{\tilde\gamma/2}}$. The function $\tilde\Gamma''(r)$ is convex for $r\geq r_0:=\exp\left(\frac{2}{\tilde\gamma}+\frac{2}{\tilde\gamma+2}+\frac{2}{\tilde\gamma+4}+\frac{2}{\tilde\gamma+6}\right)$. Let $\tilde f''(r)=\tilde\Gamma''(r_0)+\tilde\Gamma'''(r_0)(r-r_0)$ be the tangent line of $\tilde\Gamma''$ at $r=r_0$. By construction the function 
$$
\tilde g(|x|)=\begin{cases}
\tilde f(|x|) & \text{if $|x|\leq r_0$}\\
\tilde\Gamma(|x|)   & \text{if $|x|>r_0$},
\end{cases}
$$
where 
\begin{equation*}
\begin{split}
\tilde f(|x|)&=\frac16\tilde\Gamma'''(r_0)(r-r_0)^3+\frac12\tilde\Gamma''(r_0)r^2 +\left(\tilde\Gamma'(r_0)-\tilde\Gamma''(r_0)r_0\right)r\\
&\quad+\tilde\Gamma(r_0)-\frac12\tilde\Gamma''(r_0)r_0^2-\left(\tilde\Gamma'(r_0)-\tilde\Gamma''(r_0)r_0\right)r_0,
\end{split}
\end{equation*}
is such that $\tilde g''$ is convex in $[0,+\infty)$.  Hence, setting $w(x)=\tilde g(|x|^2)$
 and using Theorem \ref{propPk} we have 
$$
\I_N^-w(x)=N\I_{\xi}w(x),
$$
$\xi\in\mathbb R^N$ being an unit vector such that $\left\langle \hat x,\xi\right\rangle=\frac{1}{\sqrt{N}}$.
Moreover for $|x|\geq\sqrt{2r_0}$ it holds that $|x\pm\tau\xi|\geq\sqrt{r_0}$ for any $\tau>0$. Then for any $|x|\geq\sqrt{2r_0}$
\begin{equation*}
\begin{split}
\I_N^-w(x)&=NC_s\int_0^{+\infty}\frac{\Gamma(|x+\tau\xi|)+\Gamma(|x-\tau\xi|)-2\Gamma(|x|)} {\tau^{1+2s}}\,d\tau\\
&=NC_s\left(\ln|x|\int_0^{+\infty}\frac{{|x+\tau\xi|^{-\tilde\gamma}}+{|x-\tau\xi|^{-\tilde\gamma}}-{|x|^{-\tilde\gamma}}} {\tau^{1+2s}}\,d\tau\right.\\
&\quad\quad\qquad\left.\frac{1}{|x|^{\tilde\gamma+2s}}\int_0^{+\infty}\frac{\frac{\ln|\hat x+\tau\xi|}{|\hat x+\tau\xi|^{\tilde\gamma}}+\frac{\ln|\hat x-\tau\xi|}{|\hat x-\tau\xi|^{\tilde\gamma}}}{\tau^{1+2s}}\,d\tau\right)\\
&=NC_s\frac{1}{2|x|^{\tilde\gamma+2s}}\int_0^{+\infty}\left({\frac{\ln\left(1+\tau^2+\frac{2\tau}{\sqrt{N}}\right)}{\sqrt{1+\tau^2+\frac{2\tau}{\sqrt{N}}}^{\,\tilde\gamma}}+\frac{\ln\left(1+\tau^2-\frac{2\tau}{\sqrt{N}}\right)}{\sqrt{1+\tau^2+\frac{2\tau}{\sqrt{N}}}^{\,\tilde\gamma}}}\right){\tau^{-(1+2s)}}\,d\tau\\
&\geq -\frac{C}{|x|^{\tilde\gamma+2s}}
\end{split}
\end{equation*}
where $C=C(N,s)$ is a positive constant.\\
 Now for $r_2>r_1>\sqrt{2r_0}$ we consider the function
$$
\phi(x)=m(r_1)\frac{w(|x|)-w(r_2)}{w(0)-w(r_2)}\qquad\forall|x|\in[r_1,r_2].
$$
Without loss of generality we may further assume that $w(0)-w(r_2)>\frac12w(0)$, so that 
\begin{equation}\label{2407eq3}
\I_N^-\phi(x)\geq-Cm(r_1)\frac{1}{|x|^{\tilde\gamma+2s}}
\end{equation}
 where $C$ is a positive constant depending only on $N$ and $s$. In addition $u(x)\geq\phi(x)$ for any $|x|\in[0,r_1]\cup[r_2,+\infty)$.\\
Using the equation \eqref{2407eq1} and \eqref{weakdecay} we also have 
\begin{equation}\label{2407eq4}
\I_N^-u(x)\leq-(m(|x|))^{1+\frac{2s}{\tilde\gamma}}-\leq\left(am(1)\right)^{1+\frac{2s}{\tilde\gamma}}\frac{1}{|x|^{\tilde\gamma+2s}}\qquad\forall|x|\geq1.
\end{equation}
Since $m(r_1)\to0$ as $r_1\to+\infty$, in view of the inequality 
\begin{equation}\label{2407eq2}
m(R)R^{\tilde\gamma}<C\qquad\forall R>0,
\end{equation}
for some positive constant $C$, by \eqref{2407eq3}-\eqref{2407eq4}
 we can then pick $r_1$ sufficiently large such that 
$$
\I_N^-u(x)\leq\I_N^-\phi(x)\qquad\forall|x|\in[r_1,r_2].
$$
By comparison principle we have $u(x)\geq\phi(x)$ for any $|x|\in[r_1,r_2]$. Letting $r_2\to+\infty$ we deduce that 
$$
m(r)\geq\frac{m(r_1)}{w(0)}w(r)=\frac{m(r_1)}{w(0)}\frac{\ln r}{r^{\tilde\gamma}}\qquad\forall r>r_1,
$$ 
leading to a contradiction to \eqref{2407eq2} in the limit  as $r\to+\infty$.
\end{proof}



\section{On the operator $\JJ_k^{\pm}$}
\label{secJ}

In this section we concentrate on the operators $\JJ_k^{\pm}$ defined in~\eqref{operador2}. We leave off the analysis the cases $k=1$, where $\JJ_{1}^{\pm}$ meets $\I_1^{\pm}$ studied in the previous sections, and $k = N$, case in which $\JJ_k^{\pm} = -(-\Delta_{\R^N})^s$, already studied in~\cite{FQ}. For simplicity, we write $\Delta_{\R^k}^s = -(-\Delta_{\R^k})^s$, to denote the fractional Laplacian in $\R^k$.

\medskip

The key technical result of this section is the following
\begin{prop}\label{propJ}
	Assume $1 < k < N$. Let $u(x) = \tilde g(|x|^2)$ be such that $u \in L^1_{k,2s}$. 
\begin{itemize}
	\item[(i)] If $\tilde g$ is convex, then
	$$
	\JJ_k^- u(x)=\JJ_{V}u(x),
	$$ 
	where $V$ is any $k$ dimensional subspace which is orthogonal $x$.
	
	\item[(ii)] If $\tilde g\in C^1([0,+\infty))$ and $\tilde g$ is convex, then
	$$
	\JJ_k^+u(x)= \JJ_{V}u(x),
	$$
	where $V$ is any $k$-dimensional subspace containing $x$.
\end{itemize}

Moreover, defining $\tilde u: \R^k \to \R$ as $\tilde u(y):=g(|y|)$, 
we have
\begin{equation}\label{formula}
\JJ_k^+ u(x) = \Delta_{\R^k}^s \tilde u(y),
\end{equation}
where $y \in \R^k$ is such that $|y| = |x|$.
\end{prop}
\begin{proof}
We start with (i). Let  $V = \langle \{ \xi_1, \dots, \xi_k \} \rangle$. 
Recalling that $K(\tau) = (\sum_{i=1}^{k} \tau_i^2)^{-\frac{N + 2s}{2}}$ and dropping the P.V. we can write
\begin{equation*}\label{J}
\begin{split}
\JJ_V u(x) 
&=  \frac{C_{k,s}}{2} \int_{\R^k} [\tilde g(|x + \sum_{i=1}^k \tau_i\xi_i|^2) + g(|x - \sum_{i=1}^k \tau_i\xi_i|^2)- 2\tilde g(|x|^2)] K(\tau)d\tau \\
& = \frac{C_{k,s}}{2} \int_{\R^k} [\tilde g(|x|^2 + |\tau|^2 + 2\sum_{i=1}^k \tau_i \left\langle \xi_i, x\right\rangle + g(|x|^2 + |\tau|^2 - 2\sum_{i=1}^k \tau_i \left\langle \xi_i, x\right\rangle)- 2\tilde g(|x|^2)] K(\tau)d\tau\\
&\geq  C_{k,s} \int_{\R^k} [\tilde g(|x|^2 + |\tau|^2) - \tilde g(|x|^2)] K(\tau)d\tau \\
&= \JJ_W u(x),
\end{split}
\end{equation*}
where in the inequality above we have used the convexity inequality ~\eqref{hola}, with $a=|x|^2 + |\tau|^2$ and $b=2\sum_{i=1}^k \tau_i \left\langle \xi_i, x\right\rangle $, and $W$ is any $k$-dimensional subspace orthogonal to $x$.

\smallskip

Now we deal with (ii). Assume that $x\not\in V$ and let $\xi_{k+1}$ be the unit vector that is orthogonal to $V$ in the $k+1$ dimensional space generated by $\xi_1,\dots,\xi_k$ and $x$. 

We define $\hat x_{V^\perp} := \left\langle \hat x,\xi_{k+1}\right\rangle\xi_{k+1}=\cos(\theta) \xi_{k+1}$ with a choice of $\xi_{k + 1}$ such that $\theta \in [0, \pi/2)$. Thus,
it is possible to write $\hat x = \hat x_V + \hat x_{V^\perp}$ with $\hat x_V \in V$ such that $|\hat x_V|=\sin(\theta)$. 


Observe that the integral which defines $\JJ_V$ is independent of the choice of the vectors $\xi_i$ generating $V$, so without loss of generality we can assume that  $\{ \xi_1, ..., \xi_k \}$ are such that $\xi_i \perp \hat x$ for all $i=2,...,k$.

With this choice we get the expression
\begin{align*}
\JJ_V u(x) = & \frac{C_{k,s}}{2} \int_{\R^k} \Big{[}   \tilde g(|x|^2 + \tau^2 + 2|x|\tau_1\sin(\theta)) 
+ \ \tilde g(|x|^2 + \tau^2 - 2 |x|\tau_1 \sin(\theta)) 
- 2 \tilde g(|x|^2) \Big{]} K(\tau)d\tau\\
=: & \frac{C_{k,s}}{2} f(\theta)
\end{align*}

Notice that $f \in C([0, \pi/2]) \cap C^1([0, \pi/2))$. Then, for each $\phi \in [0, \theta)$ we have
\begin{align*}
f'(\theta)= 2\cos(\theta) |x| \int_{{ \R^k}} \left[\tilde g'(|x|^2 + \tau^2 + 2|x|\tau_1\sin(\theta))
-  \tilde g'(|x|^2 + \tau^2 - 2 |x|\tau_1 \sin(\theta))\right]\tau_1
K(\tau)d\tau\,.
\end{align*}



Since $\tilde g'$ is nondecreasing, and the function inside the integral is even, we conclude that $f$ is nondecreasing. Thus
$$
\JJ_k^+u(x) = \frac{C_{k,s}}{2} f(\pi/2) = \JJ_{V} u(x)
$$ 
for any $k$-th dimensional space $V$ containing $x$.

In order to prove \eqref{formula}, let $V = \left\langle \{ \xi_i \}_{i=1}^k\right\rangle$ a $k$-dimensional subspace containing $x$. In this way  $\JJ_k^+u(x)=\JJ_Vu(x)$ and without loss of generality we can further assume that $\xi_i \perp \hat x$ for all $i=2,...,k$. 
Let $\{ e_i \}_{i=1}^{k}$ the canonical basis in $\R^k$. Using the rotation invariance of the fractional Laplacian, for  $y \in \R^k$ such that $|y|=|x|$ we have
\begin{align*}
\Delta_{\R^k}^s \tilde u (y) & = \Delta_{\R^k}^s \tilde u (|y|e_1) \\
& = \frac{C_{k,s}}{2}\,\mathrm{P. V.} \int_{\R^k} [ \tilde g(||y| e_1 + \sum_{i=1}^{k} z_i e_i|^2) + \tilde g(||y| e_1 - \sum_{i=1}^{k} z_i e_i|^2) - 2 \tilde g(|y|^2)]  |z|^{-(k + 2s)}dz \\
& =  \frac{C_{k,s}}{2}\,\mathrm{P. V.} \int_{\R^k} [\tilde g(|x|^2 + 2 |x| \tau_1 + |\tau|^2) + \tilde g(|x|^2 - 2|x| \tau_1 + |\tau|^2) - 2 \tilde g(|x|^2)] K(\tau)d\tau\\
& = \JJ_V u (x)  = \JJ_k^+ u (x),
\end{align*}
as we wanted to show.
\end{proof}

Using known results for the fractional Laplacian (see~\cite{BG, CSext}) and the previous proposition we get the following
\begin{cor}
The function $u(x)=|x|^{-(k-2s)}$ satisfies 
$$
\JJ_k^+ u(x)=0 \quad \mbox{for} \ x \in \R^N \setminus \{ 0 \}.
$$
\end{cor}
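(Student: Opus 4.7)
The plan is to invoke Proposition~\ref{propJ} directly: with $g(t)=t^{-(k-2s)}$, the formula $\JJ_k^+u(x)=\Delta_{\R^k}^s\tilde u(y)$ reduces the evaluation of the $k$-dimensional extremal operator in ambient space $\R^N$ to the evaluation of the $k$-dimensional fractional Laplacian of the same radial profile $\tilde u(y)=|y|^{-(k-2s)}$ in $\R^k$, where the result is classical.

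First I would check that the hypotheses of Proposition~\ref{propJ} are satisfied by $g(t)=t^{-(k-2s)}$. Since $k\geq 2$ and $s\in(1/2,1)$, the exponent $k-2s$ is strictly positive (and strictly less than $k$). Then $g'(t)=-(k-2s)t^{-(k-2s+1)}$ and hence
\begin{equation*}
\frac{g'(t)}{t}=-(k-2s)\, t^{-(k-2s+2)},
\end{equation*}
which is negative and strictly increasing in $t>0$, so in particular nondecreasing. A standard computation shows that $u(x)=|x|^{-(k-2s)}$ belongs to the weighted space $L^1_{k,2s}(\R^N)$ because $k-2s<k$ controls the singularity at the origin and $k-2s+(k+2s)=2k>k$ controls the decay at infinity.

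By Proposition~\ref{propJ}, for any $x\in\R^N\setminus\{0\}$ and any $y\in\R^k$ with $|y|=|x|$,
\begin{equation*}
\JJ_k^+ u(x)=\Delta_{\R^k}^s\tilde u(y),\qquad \tilde u(y)=|y|^{-(k-2s)}.
\end{equation*}
Now $\tilde u$ is (up to a positive multiplicative constant) the fundamental solution of the fractional Laplacian $\Delta_{\R^k}^s$ in dimension $k$, i.e.\ the Riesz kernel of order $2s$; this is precisely the content of the references \cite{BG, CSext}, where it is shown that $\Delta_{\R^k}^s(|y|^{-(k-2s)})=0$ pointwise for $y\neq 0$. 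Substituting into the previous identity yields $\JJ_k^+ u(x)=0$ for every $x\neq 0$.

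The only non-routine point is the verification of the monotonicity hypothesis and the integrability on $L^1_{k,2s}$; both are straightforward given the range of the exponent. Beyond that, the proof is essentially a one-line application of Proposition~\ref{propJ} combined with the well-known fact that the Riesz kernel is the fundamental solution of $\Delta_{\R^k}^s$ in the appropriate dimension.
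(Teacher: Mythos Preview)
Your proof is correct and follows exactly the route the paper indicates: apply Proposition~\ref{propJ} to reduce $\JJ_k^+u(x)$ to $\Delta_{\R^k}^s\tilde u(y)$, then invoke the classical fact (from \cite{BG, CSext}) that $|y|^{-(k-2s)}$ is $\Delta_{\R^k}^s$-harmonic in $\R^k\setminus\{0\}$. The paper's own proof is just the one-line remark preceding the corollary; your write-up simply fills in the hypothesis checks, which are all valid.
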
 

Using the representation formula and Theorem 1.3 in Felmer and Quaas~\cite{FQ}, we can get the Liouville Theorem for $\JJ_k^+$
\begin{teo}\label{teoLiouvilleJ}
	Let $1 < k < N$. Then, the equation 
	\begin{equation}\label{eqfinal}
	\JJ_k^+ u(x)+u^p(x)=0\quad\text{in $\R^N$}
	\end{equation}
	has nontrivial viscosity supersolutions if, and only if,
	$$
	p>\frac{k}{k - 2s}.
	$$
\end{teo}

\begin{proof}
	For the existence of nontrivial supersolution, we consider $\frac{1}{p - 1} < q < \frac{k - 2s}{2s}$ and $v(y) = (1 + |y|)^{-2sq}, y \in \R^k$. According to~\cite{FQ}, we have
	$$
	\JJ_k^+ v(x) + v^p = \Delta^s_{\R^k} v(y) + v^p \leq 0.
	$$
	
	On the other hand, if $p > k/(k - 2s)$  and there exists a nontrivial supersolution $u$ for~\eqref{eqfinal}, the function 
	$$
	v(x) = \min \{ u(Ox) : O \ \mbox{is a rotation matrix in} \ \R^N \}
	$$
	is a positive, radial supersolution for~\eqref{eqfinal}. Let $\tilde h: [0,+\infty) \to \R$ such that $v(x) = \tilde h(|x|)$ and denote $w(y) = \tilde h(|y|), y \in \R^k$. Then,  we have
	\begin{align*}
	\Delta_{\R^k}^s w(y) & = \frac{C_{k,s}}{2} \int_{\R^k} [\tilde h(|\sum_{i=1}^{k} y_i e_i + \sum_{i=1}^{k} z_i e_i|) + \tilde h(|\sum_{i=1}^{k} y_i e_i - \sum_{i=1}^{k} z_i e_i|) - 2\tilde h(|y|)]K(z)dz \\
	& \leq \JJ_k^+ v (\sum_{i=1}^{k} y_i e_i),
	\end{align*}
	where $\{ e_i \}_{i=1}^{N}$ is the canonical basis (we have identified $e_i \in \R^k$ for $i \leq k$). Then, $w$ is a nontrivial supersolution to
	$$
	\Delta_{\R^k}^s w + w^p \leq 0 \quad \mbox{in} \ \R^k,
	$$
	which contradicts the nonexistence result in~\cite{FQ}.
\end{proof}

For $\JJ_k^-$, in analogy to Theorem~\ref{LiouvilleMinus} we have the following
\begin{teo}\label{LiouvilleMinusJ}
	Assume $1 < k < N$ and consider the equation
	\begin{equation}\label{equazione}
	\JJ_k^- u(x)+u^p(x)=0\quad\text{in $\R^N$.}
	\end{equation}
	Then
	\begin{itemize}
		\item[(i)] for any $p\geq1$ there exist positive classical solutions of \eqref{equazione};
		\item[(ii)] for any $p\in(0,1)$ there exist nonnegative viscosity solutions $u\not\equiv0$ of \eqref{equazione}.
	\end{itemize}

\end{teo}

\section{Appendix}

In this section we provide a sketch of the proof of some results related to the convergence of the nonlocal operators presented here towards the local regime, that is, when $s \to 1^-$. It is worth to mention that the normalizing constant in~\eqref{frac} is given (see~\cite{Hitch}) by
\begin{equation}\label{normconst}
C_{N,s} = \Big{(} \int_{{ \R^N}} \frac{1 - \cos(z_1)}{|z|^{N + 2s}}dz \Big{)}^{-1}.
\end{equation}

We start with the following convergence result that is at the core of the stability of viscosity solutions. Recall that we denote $C_s=C_{1,s}$.
\begin{lema}
	Let $r_0\in(0,1)$ and let $u \in C^2(\R^N) \cap L^1_{1,2s}$ for any $s\in(r_0,1)$ be such that 
	$$
	\| u \|_{L^1_{1,2s}}:=\sup_{\dim(V)=1}\left\{  \int_{V} \frac{|u(y)| d  \mathcal H^1(y)}{1 + |y|^{1 + 2s}}  \right\} \leq C
	$$
	for $C$ independent of $s$. Then, for each $x \in \R^N$ we have
	$$
	\I_k^\pm u(x) \to \PP_k^{\pm}u(x) \quad \mbox{as} \ s \to 1^-.
	$$

	Analogously, if  $u \in C^2(\R^N) \cap L^1_{k,2s}$ for any $s\in(r_0,1)$ and 
	$$
	\| u \|_{L^1_{k,2s}}:=\sup_{\dim(V)=k}\left\{  \int_{V} \frac{|u(y)| d  \mathcal H^k(y)}{1 + |y|^{k + 2s}}  \right\} \leq C
	$$
	for $C$ independent of $s$, then for each $x \in \R^N$ we have $\JJ_k^{\pm}u(x) \to \PP_k^{\pm} u(x)$ as $s \to 1^-$.
\end{lema}

\begin{proof}
	We write the result for $\I_k^+$, being the result for $\I_k^-$ analogous. 
	
	For $s\in(r_0,1)$, there exists a frame $\{ \xi_j^s \}_j$ such that
	\begin{align*}
	\I_k^+ u(x) - \PP_k^+u(x) = \sum \limits_{j=1}^k \I_{\xi_j^s} u(x) - \PP_k^+u(x)
	 \leq \sum \limits_{j=1}^k \Big{(} \I_{\xi_j^s} u(x) - \langle D^2u(x) \xi_j^s, \xi_j^s \rangle \Big{)}.
	\end{align*}
	
	Let $\varepsilon > 0$. For $\delta \in (0,1)$ to be fixed, we can write for each $j$
	\begin{align*}
	\I_{\xi_j^s} u(x) = \frac{1}{2} C_s \int_{-\delta}^\delta  \langle D^2u(\tilde x_{\tau, j, s}) \xi_j^s, \xi_j^s \rangle  |\tau|^{1 - 2s} d\tau + C_s O(\delta^{-2s}),
	\end{align*}
	where $\tilde x_{\tau, j, s} \in B_\delta(x)$ for all $\tau, j, s$, and $O(\delta^{-2s})$ just depend on the $\| u \|_{L^1_{2s}} \leq C$ for some $C$ independent of $s$. Using the continuity of $D^2 u$, we can fix $\delta$ small enough in order to have
	$$
	|D^2u(\tilde x_{\tau, j, s}) - D^2 u(x) | \leq \varepsilon, 
	$$
	for all $s \in (r_0,1), j=1,\ldots,k$ and $|\tau| < \delta$.
Then, we can write
	\begin{align}\label{0804eq3}
	\I_k^+ u(x) - \PP_k^+u(x) 
	\leq & \frac{C_s k \varepsilon}{2 - 2s} \delta^{2 - 2s} + \sum \limits_{j=1}^k \langle D^2 u(x) \xi_j^s, \xi_j^s \rangle \Big{(} \frac{C_s}{2 - 2s} \delta^{2 - 2s} - 1 \Big{)} + C_s O(\delta^{-2s}). 
	\end{align}
	
	Since
	$$
	\frac{C_s}{2(1 - s)} \to 1 \quad \mbox{as} \ s \to 1^-,
	$$
	(see \cite{Hitch}),
	then passing to the limit in \eqref{0804eq3}, we have 
	$$
	\limsup_{s\to1^-}\I_k^+ u(x) - \PP_k^+u(x)\leq k\varepsilon. 
	$$
	%
	A reverse inequality can be found in the same way, and the result follows.
	
	For $\JJ_{k}^{\pm}$ the proof is similar, so we will be sketchy. In this case, given $V = \{ \xi_i \}_{i=1}^k \in \mathcal V_k$ and for $\delta > 0$ we can write
	\begin{align*}
	\JJ_V u(x) = \frac{C_{k,s}}{2} \sum_{i,j=1}^k \int_{B_\delta} \langle D^2u(\tilde x_{i,j,s,\tau})  \xi_i, \xi_j \rangle \tau_i \tau_j |\tau|^{-(k + 2s)} d\tau + C_{k,s}\, O(\delta^{-2s}),
	\end{align*}
	where $\tilde x_{i,j,s,\tau} \in \left\langle V\right\rangle$  is such that $|\tilde x_{i,j,s,\tau} - x| \leq \delta$. Then, using the continuity of $u$, for each $\varepsilon > 0$ we can get $\delta > 0$ such that
	\begin{align*}
	\JJ_V u(x) = & O(\varepsilon) + \frac{C_{k,s}}{2} \sum_{i,j=1}^k \langle D^2u(x)  \xi_i, \xi_j \rangle \int_{B_\delta}  \tau_i \tau_j |\tau|^{-(k + 2s)} d\tau + C_{k,s}\, O(\delta^{-2s}),
	\end{align*}
	and using the symmetry of the integral term, we have
	\begin{align*}
	\int_{B_\delta}  \tau_i \tau_j |\tau|^{-(k + 2s)} d\tau = \delta_{ij} \int_{B_\delta}  \tau_1^2 |\tau|^{-(k + 2s)} d\tau = \delta_{ij} k^{-1} \int_{B_\delta} |\tau|^{2 - k - 2s} d\tau = \delta_{ij} k^{-1} |\mathbb S^{k - 1}| \frac{\delta^{2 - 2s}}{2 - 2s}.
	\end{align*}
	where $\delta_{ij}$ is the Kronecker delta, and $|\mathbb S^{k - 1}|$ denotes the $(k-1)$-dimensional measure of the unit sphere in $\R^k$. For $k > 1$, we have the estimate  (see Corollary 4.2 in~\cite{Hitch})
	\begin{equation*}
	\frac{C_{k,s}\, |\mathbb S^{k-1}|}{4 k (1 - s)} \to 1 \quad \mbox{as} \ s \to 1^-,
	\end{equation*}
	we conclude the result. 
\end{proof}

\begin{lema}\label{lemaasym}
Let $\bar \gamma=\bar\gamma(k,s)$ defined in Proposition~\ref{propbargamma}. Then, $\bar \gamma \to 0$ as $s \to 1^-$.
		
		Let $\tilde \gamma=\tilde \gamma(N,s)$ defined in Lemma~\ref{preliminary lemma}. Then, $\tilde \gamma \to N - 2$ as $s \to 1^-$.
\end{lema}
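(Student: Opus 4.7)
The plan is to pass to the limit $s \to 1^-$ in the defining relations $c_k(\bar\gamma) = 0$ and $c(\tilde\gamma) = 0$, using the stability lemma above together with the convexity of the relevant auxiliary functions established in Proposition~\ref{propbargamma} and Lemma~\ref{preliminary lemma}.

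For the first assertion, the cleanest route is the explicit bound $\bar\gamma(2,s) < 2(1-s)$ obtained at the very end of the proof of Proposition~\ref{propbargamma} from the computation $c_2\bigl(2(1-s)\bigr) = C_s\,\frac{2(1-s)}{s(2s-1)} > 0$ (matching the case $k=2$ referenced in the introduction). Since $2(1-s) \to 0^+$ as $s \to 1^-$, this immediately yields $\bar\gamma \to 0$.

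For the second assertion, I will first establish the pointwise asymptotic
\begin{equation*}
C_s\, c(\gamma) \;\xrightarrow[s\to 1^-]{}\; \frac{\gamma(\gamma+2-N)}{N} \qquad \text{for every fixed } \gamma > 0.
\end{equation*}
The starting point is the identity $C_s\, c(\gamma) = \I_{\xi^*}(|\cdot|^{-\gamma})(x)\big|_{|x|=1}$, where $\xi^*$ is any unit vector satisfying $\langle \xi^*,\hat x\rangle = 1/\sqrt{N}$, i.e. a rewriting of $c(\gamma)$ incorporating the $C_s$ normalization. Applying the stability lemma of this appendix to the smooth test $u(x)=|x|^{-\gamma}$ and using the eigenstructure of $D^2(|x|^{-\gamma})$ (eigenvalue $\gamma(\gamma+1)|x|^{-\gamma-2}$ along $\hat x$ and $-\gamma|x|^{-\gamma-2}$ in each direction orthogonal to it), one computes $\langle D^2u(x)\xi^*,\xi^*\rangle\big|_{|x|=1} = \gamma(\gamma+2-N)/N$, which gives the displayed limit.

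Once this is in hand, the rest is soft. The limit function $\gamma \mapsto \gamma(\gamma+2-N)/N$ is strictly negative on $(0,N-2)$ and strictly positive on $(N-2,+\infty)$, vanishing only at $\gamma = N-2$. Fix $\varepsilon \in (0, N-2)$, permissible since $N\geq3$. For $s$ close enough to $1$, pointwise convergence yields $c(N-2-\varepsilon)<0$ and $c(N-2+\varepsilon)>0$, and the sign characterization in Lemma~\ref{preliminary lemma} then forces $\tilde\gamma(N,s) \in (N-2-\varepsilon, N-2+\varepsilon)$, giving $\tilde\gamma \to N-2$. The main technical obstacle I anticipate is the rigorous justification of the pointwise limit: the integrand in $c(\gamma)$ is $O(\tau^{1-2s})$ near $\tau=0$, so the resulting $(2-2s)^{-1}$ blow-up must be balanced against $C_s \sim 2(1-s)$ exactly as in the proof of the preceding stability lemma, by splitting the integral into a small ball around $0$ (handled by a second-order Taylor expansion plus the asymptotic $C_s/(2(1-s))\to 1$, which recovers the Hessian contribution) and its complement (dominated convergence, uniform in $s$, using that $|x|^{-\gamma} \in L^1_{2s}$ for every admissible $s$).
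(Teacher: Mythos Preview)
Your argument for $\tilde\gamma \to N-2$ is correct and essentially matches the paper's: both reduce to the computation that the second--order Taylor coefficient of the integrand at $\tau=0$ is $\gamma(\gamma+2-N)/N$, balanced against $C_s/(2-2s)\to 1$. Your packaging---establish the pointwise limit $C_s\,c(\gamma)\to \gamma(\gamma+2-N)/N$ for each fixed $\gamma$, then sandwich $\tilde\gamma$ between $N-2-\varepsilon$ and $N-2+\varepsilon$ using the sign characterization from Lemma~\ref{preliminary lemma}---is in fact slightly cleaner than the paper's, which passes to accumulation points of $\tilde\gamma(s)$ and therefore has to first argue that $\tilde\gamma$ stays bounded. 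One caveat: you cannot invoke the stability lemma of the appendix verbatim, since $|x|^{-\gamma}\notin C^2(\R^N)$; but the line $x+\tau\xi^*$ with $\langle\hat x,\xi^*\rangle=1/\sqrt{N}$ stays at distance $\sqrt{1-1/N}>0$ from the origin, so the same Taylor--plus--dominated--convergence splitting you describe goes through without change. You should also say a word about $N=2$, where your choice of $\varepsilon$ is vacuous; the same pointwise limit gives $C_s\,c(\gamma)\to -\gamma^2/2<0$ for every $\gamma>0$, which combined with $\tilde\gamma>0$ forces $\tilde\gamma\to 0=N-2$.

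Your argument for $\bar\gamma\to 0$, however, has a genuine gap: it only treats $k=2$. The explicit computation at the end of Proposition~\ref{propbargamma} gives
\[
c_k\bigl(2(1-s)\bigr)=\frac{C_s}{s}\Bigl(\frac{1}{2s-1}-(k-1)\Bigr),
\]
which is positive for $k=2$ (yielding $\bar\gamma(2,s)<2(1-s)$) but \emph{negative} for $k\ge 3$ once $s>\frac{k}{2(k-1)}$, hence for all $s$ close to $1$. In that regime one gets $\bar\gamma(k,s)>2(1-s)$, and no conclusion follows. More structurally, since $c_{k+1}(\gamma)=c_k(\gamma)+c^\perp(\gamma)<c_k(\gamma)$, the zero $\bar\gamma(k,s)$ is \emph{increasing} in $k$, so a bound valid for $k=2$ cannot propagate upward. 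The paper handles general $k$ by an entirely different argument: it exploits the monotonicity in $k$ to reduce to $k\ge 4$, then argues by contradiction via viscosity stability---if $\bar\gamma(s)\to\gamma_1\in(0,1]$ along a subsequence, then $|x|^{-\gamma_1}$ would be a classical solution of $\PP_k^+(D^2w)=0$ for $x\neq0$, while the only such power is $|x|^{-(k-2)}$ and $k-2\ge 2>\gamma_1$.
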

\begin{proof}
	We already know that $\bar \gamma\in(0,1)$ and  $\tilde \gamma > 0$. Moreover from the proof of Lemma \ref{preliminary lemma} we can also infer that, for any $s\in(\frac12,1)$, $\tilde\gamma<c$ where $c$ is a positive constant depending only on $N$. Hence both $\bar\gamma$ and $\tilde\gamma$ are uniformly bounded.
	
	For $\bar \gamma$, let us first observe that, by Proposition \ref{propbargamma}, one has $$\bar\gamma(k,s)<\bar\gamma(k+1,s).$$ Then it is sufficient to prove that $\bar \gamma\to0$, as $s\to1^-$, for $k$ large, say $k\geq4$.  If not, let $\gamma_1 \in(0,1]$ be an accumulation point of $\bar \gamma$ as $s \to 1^-$. Then, by stability of viscosity solutions, the function $w_{\gamma_1}(x)=|x|^{-\gamma_1}$ would be a solution of $\PP_k^+(D^2w)=0$ for $x\neq0$. But this contradicts  the fact that the only positive exponent $\gamma$ such that $w_\gamma(x)=|x|^{-\gamma}$ is  solution for $\PP_k^+$  is $\gamma=k-2$, see \cite{BGL}, while $\gamma_1<k-2$ for $k\geq4$. Thus, $\bar \gamma \to 0$ as $s \to 1^-$.
	
	\medskip
	
	On the other hand, let $\gamma_1 \geq 0$ be an accumulation point of $\tilde \gamma$ as $s \to 1^-$. Using the definition of $c(\gamma)$ in~\eqref{eqc}, for each $s$ we have
	\begin{equation*}
	0 = C_s \int_0^{+\infty}\frac{\left(1+\tau^2+\frac{2}{\sqrt{N}}\tau\right)^{-\tilde \gamma/2}+\left(1+\tau^2-\frac{2}{\sqrt{N}}\tau\right)^{-\tilde \gamma/2}-2}{\tau^{1+2s}}\,d\tau,
	\end{equation*}
	and from here we have
	\begin{align*}
	0 = C_s \int_0^{\frac{1}{2\sqrt{N}}}\frac{\left(1+\tau^2+\frac{2}{\sqrt{N}}\tau\right)^{-\tilde \gamma/2}+\left(1+\tau^2-\frac{2}{\sqrt{N}}\tau\right)^{-\tilde \gamma/2}-2}{\tau^{1+2s}}\,d\tau + C_s O(1),
	\end{align*}
	where $O(1)$ is independent of $s$ close to $1$. By a Taylor expansion, we have
	\begin{equation*}
	\left(1+\tau^2+\frac{2}{\sqrt{N}}\tau\right)^{-\tilde \gamma/2}+\left(1+\tau^2-\frac{2}{\sqrt{N}}\tau\right)^{-\tilde \gamma/2}-2 = \tilde \gamma \tau^2 \Big{(}-1 + \frac{\tilde \gamma + 2}{N}\Big{)} + O(\tau^3),
	\end{equation*}
	where $O(\tau^3)$ is independent of $s$. Thus, replacing this into the integral term we get
	\begin{align*}
	0 = \frac{\tilde\gamma }{(4N)^{1-s}} \Big{(}-1 + \frac{\tilde \gamma + 2}{N}\Big{)} \frac{C_s}{2 - 2s} + C_s O(\frac{1}{3 - 2s}) + C_s O(1),
	\end{align*}
	from which, taking limit as $s \to 1^-$ we arrive at 
	$$
	0 =  \gamma_1 \Big{(} -1 + \frac{\gamma_1 + 2}{N} \Big{)},
	$$
	for some $C > 0$, from which the result follows. If $N \geq 3$ we know that $\gamma_1 \geq 1$  (see Remark~\ref{rmktildegamma}), from which the result follows. In the case $N=2$, we see that $\gamma_1 = 0$.
\end{proof}

%


\noindent
{\bf Acknowledgements:} 
E. T. was partially supported by Fondecyt no. 1201897 and I. B. and G. G. are partially supported by GNAMPA-INdAM. This work was started when E. T. was visiting Sapienza as a 3 months visiting professor for 2020, the three authors wish to thank Sapienza University for the support.

\end{document}